\documentclass[a4 paper,12pt,bezier]{article}
\usepackage[top=3cm,bottom=3cm,left=2.5cm,right=2.5cm]{geometry}
\usepackage{amsmath}
\usepackage{amsfonts,amsthm,amssymb}
\usepackage{amsfonts}
\usepackage{graphics,subfigure,caption2}
\usepackage{tikz, color}
\usepackage{graphics,epstopdf}
\usepackage{latexsym,bm}
\usepackage{amsfonts,amsthm,amssymb,mathrsfs,bbding}
\usepackage{hyperref}
\usepackage{CJK}
\usepackage{color}
\usepackage{indentfirst}
\usepackage{graphicx}
\usepackage{cleveref}
\usepackage{booktabs}
\usepackage{multirow}
\usepackage{cleveref}

\newtheorem{lem}{Lemma}[section]
\newtheorem{nota}{Notation}
\newtheorem{thm}[lem]{Theorem}
\newtheorem{cor}[lem]{Corollary}

\newtheorem{df}[lem]{Definition}

\newtheorem{prob}{Problem}
\newtheorem{cla}[lem]{Claim}
\theoremstyle{plain}

\usepackage{cite}

\begin{document}
\begin{CJK}{GBK}{song}
\title{Extremal problems on disjoint path covers of graphs}
\author{Shujie Chen, Tao Tian\footnote{Corresponding author. E-mail: shujiechen2024@163.com (S. Chen), taotian0118@163.com (T. Tian).}\\
\small  School of Mathematics and Statistics, Key Laboratory of Analytical Mathematics and\\
\small  Applications $($Ministry of Education$)$, Fujian Key Laboratory of Analytical Mathematics and\\
\small  Applications (FJKLAMA), Center for Applied Mathematics of Fujian Province $($FJNU$)$,\\
\small Fujian Normal University,
\small Fuzhou 350117, PR China.}
\date{}
\maketitle

\maketitle {\flushleft\bf Abstract}:
In 1962, Erd\H{o}s characterized the maximum size of nonhamiltonian graphs of order $n$ with minimum degree at least $k$. Later, Ning and Peng [Combin. Probab. Comput. 29 (2020) 128-136] extended Erd\H{o}s's results to the clique condition and provided the maximum clique number for nonhamiltonian graphs of order $n$ with minimum degree at least $k$. Recently, Zhang [European J. Combin. 112 (2023) 103728] determined the maximum number of $s$-cliques in nonhamiltonian graphs with prescribed order and minimum degree. A natural extension is to characterize the maximum number of $s$-cliques under other graph properties. Notably, disjoint path cover problems are closely related to Hamiltonicity. In this paper, we generalize results on Hamiltonicity and establish sufficient conditions for a graph to possess one-to-one, one-to-many and many-to-many $t$-disjoint path covers in terms of the number of cliques and the $\alpha$-spectral radius, respectively. Furthermore, we characterize the extremal graphs that attain these bounds respectively.

\maketitle {\flushleft\textit{\bf Keywords}}: Clique, $\alpha$-spectral radius, Disjoint path cover, Hamiltonian, Hamilton-connected

\section{  Introduction  }\label{sec1}
In this paper, we only consider finite, connected and simple graphs.  We use $V(G)$ and $E(G)$ to denote the vertex set and the edge set of a graph $G$, respectively. Let $|V(G)|$ (or $|G|$) and $e(G)$ denote the \emph{order} and \emph{size} of a graph $G$, respectively. For $v\in V(G)$, we let $d_G(v)$ denote the \emph{degree} of $v$ in $G$. We use $\delta(G)$ to denote the minimum degree of $G$ (or simply $\delta$ if no confusion can arise). For a vertex subset $S\subseteq V(G)$, the induced subgraph $G[V(G)\backslash S]$ is denoted by $G-S$. If $S=\{v\}$, we write $G-v$ for $G-\{v\}$. A subset $X\subseteq V(G)$ is called a \emph{vertex cut} of $G$ if $G-X$ has at least two components. The \emph{connectivity} of graph $G$,  denoted by $\kappa(G)$, is the cardinality of a minimum vertex cut of a graph $G$. Define $\kappa(K_{n})=n-1$, where $K_{n}$ is a complete graph of order $n$. A graph $G$ is said to be \emph{$k$-connected} if $\kappa(G) \geq k$. A graph with no vertices is a \emph{null graph}. For two disjoint graphs $G$ and $H$, denote by $G\lor H$ and $G\cup H$ the \emph{join} and the \emph{union} of $G$ and $H$, respectively. For any two distinct vertices $u,v\in V(G)$ with $uv\notin E(G)$, we denote by $G+uv$ the graph obtained from $G$ by adding an edge $uv$ to $G$. Let $\overline{G}$ denote the \emph{complement} of $G$. A set of vertices $W\subseteq V(G)$ is a \emph{clique} of a graph $G$ if for any two vertices $x,y\in W$, $xy\in E(G)$. An \emph{$s$-clique} is a clique with cardinality $s$. We use $N_s(G)$ to denote the number of $s$-cliques in $G$. The \emph{circumference} of a graph $G$ is the length of a longest cycle in $G$. For a  graph $G$, we define
\begin{itemize}
	\item$\delta(G)=\emph{\rm{min}} \{d_G(v) \mid v\in V(G) \} \; \emph{\rm{(Dirac-type)}};$
	\item${\sigma}_2(G)=\emph{\rm{min}} \{d_G(x)+d_G(y) \mid x,y \in V(G), \; xy\notin E(G) \} \; \emph{\rm{(Ore-type)}};$
\end{itemize}
If $G$ is a complete graph, we define $\sigma_2(G)=\infty$. For other terms not explained in this paper, the reader can refer to \cite{a1}.

Given a graph $G$ with vertex set $V(G)=\{v_1,v_2,\dots,v_n\}$, the adjacency matrix of $G$ is denoted by $A(G)$. The $(i,j)$-entry of $A(G)$ is $1$ if $v_iv_j\in E(G)$, and $0$ otherwise. The diagonal matrix of vertex degrees of $G$ is denoted by $D(G)$. The Laplacian matrix $L(G)$ and signless Laplacian matrix $Q(G)$ of $G$ are defined by $L(G)=D(G)-A(G)$ and $Q(G)=D(G)+A(G)$, respectively. The spectral radius of $G$ is the largest eigenvalue of $A(G)$, denoted by $\lambda(G)$. The signless Laplacian spectral radius of the graph $G$ is the largest eigenvalue of $Q(G)$,  denoted by $\mu(G)$. For $\alpha\in [0,1)$, Nikiforov \cite{a2} defined the $A_\alpha$-matrix of $G$ as
$$
A_\alpha(G)=\alpha D(G)+(1-\alpha)A(G).
$$
The \emph{$\alpha$-spectral radius} of $G$ is the largest eigenvalue of $A_\alpha (G)$, denoted by $\lambda_\alpha(G)$. It is obvious that $\lambda_0(G)$ is the spectral radius of $G$, and $2\lambda_{1/2}(G)$ is the signless Laplacian spectral radius of $G$.

A \emph{path cover} of a graph $G$ is a set of paths in $G$ such that every vertex of $G$ is contained in at least one path. A \emph{disjoint path cover} of $G$ is a set of paths in $G$ such that every vertex of $G$ belongs to exactly one path. Given two disjoint vertex subsets $U=\{u_1,u_2,\dots,u_t\}$ and $V=\{v_1,v_2,\dots,v_t\}$ of $V(G)$, the \emph{many-to-many $t$-disjoint path cover} of $G$ is defined as a disjoint path cover of size $t$, where each  path connects $u_i$ and $v_{\tau(i)}$ for some  permutation $\tau$ of $\{1,2,\dots,t\}$. A graph is \emph{many-to-many $t$-disjoint path coverable} if there exists a many-to-many $t$-disjoint path cover for any two disjoint vertex subsets $U=\{u_1,u_2,\dots,u_t\}$ and $V=\{v_1,v_2,\dots,v_t\}$.  By modifying the sets $U$ and $V$ to $U=\{u\}$ and $V=\{v_1,v_2,\dots,v_t\}$ or $U=\{u\}$ and $V=\{v\}$, we can obtain the definitions of \emph{one-to-many t-disjoint path coverable} and \emph{one-to-one $t$-disjoint path coverable}.

The disjoint path cover problem, a fundamental topic in graph theory, finds crucial applications in software verification, database design, and code optimization \cite{a3,a4}. It focuses on constructing vertex-disjoint paths in interconnection networks to maximize node utilization. Extensive research has been conducted on this problem (see \cite{a5,a6,a7}). Determining whether there exists a one-to-one, one-to-many, or many-to-many t-disjoint path cover for a given pair of terminal sets in a graph is NP-complete \cite{a8,a9}.

A graph $G$ is called \emph{Hamiltonian} if $G$ contains a Hamilton cycle, i.e., a cycle that contains every vertex of $G$. A \emph{Hamilton path} in a graph $G$ is a path that contains all the vertices of $G$. A graph $G$ is said to be \emph{Hamilton-connected} if every pair of distinct vertices in $G$ are connected by a Hamilton path. Indeed, the disjoint path cover problem is inherently connected to connectivity and Hamiltonicity, which are two fundamental concepts in graph theory. Clearly, one-to-one and one-to-many $t$-disjoint path coverable graphs are $t$-connected. Tian \cite{a10} proved that if a graph $G$ is one-to-many $t$-disjoint path coverable, then $G$ is $(t+1)$-connected. A graph with at least 3 vertices is one-to-one 2-disjoint path coverable if and only if it is Hamiltonian. Furthermore, a graph with at least 3 vertices is one-to-one 1-disjoint path coverable or one-to-many 2-disjoint path coverable or many-to-many 1-disjoint path coverable if and only if it is Hamilton-connected.

Determining sufficient conditions for the Hamiltonicity of a graph remains a hot topic in graph theory. Therefore, the close relationship between disjoint path covers and Hamiltonicity motivates the investigation of sufficient conditions for graphs to be one-to-one, one-to-many, or many-to-many $t$-disjoint path coverable.

Many classic results have been established regarding the conditions for a graph to be Hamiltonian (see \cite{a11,a12,a14}). Motivated by these results, sufficient conditions for disjoint path covers have also been extensively studied, including Dirac-type conditions \cite{a18,a19}, Ore-type conditions \cite{a18,a20}, P\'{o}sa-type and Bondy-type conditions \cite{a23}, among others.

\begin{thm}\label{thm0}
	Let $G$ be a graph of order $n$ and $t\geq 2$.
	\begin{enumerate}
		\item[(i)] \emph{\rm{(Lin et al. \cite{a18})}} If $\delta(G)\geq \frac{n+t-2}{2}$, then $G$ is one-to-one $t$-disjoint path coverable.
		\item[(ii)] \emph{\rm{(Lin et al. \cite{a19})}} If $\delta(G)\geq \frac{n+t-1}{2}$, then $G$ is one-to-many $t$-disjoint path coverable.
		\item[(iii)] \emph{\rm{(Lin et al. \cite{a19})}} If $\delta(G)\geq \frac{n+t}{2}$, then $G$ is many-to-many $t$-disjoint path coverable.
	\end{enumerate}
\end{thm}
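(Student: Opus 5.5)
The natural route is to reduce all three parts to classical Hamiltonicity results by adding auxiliary vertices, with Ore/Dirac-type closure arguments doing the work. We treat (iii) first and then specialise to (i) and (ii).

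\textbf{Part (iii), many-to-many.} Let $\delta(G)\ge \frac{n+t}{2}$ and let $U=\{u_1,\dots,u_t\}$ and $V=\{v_1,\dots,v_t\}$ be disjoint. Build $G'$ from $G$ by adding $t$ new vertices $w_1,\dots,w_t$. Join every $w_i$ to all of $U\cup V$ and to no other vertex; there are no edges among the $w_i$. A Hamilton cycle of $G'$ traverses each $w_i$ as $x\,w_i\,y$ with $x,y\in U\cup V$. Deleting the $w_i$ splits the cycle into $t$ paths covering $V(G)$, each joining two terminals. We need every such path to join some $u$ with some $v$, so we force alternation. Join $w_i$ only to $u_i$ and to all of $V$; then each $w_i$ uses $u_i$, since a vertex of $V$ adjacent to two $w$'s cannot appear twice. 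The resulting paths are $u_i\ldots$ segments with ends in $U\cup V$, and we still need to exclude $u$--$u$ paths. A cleaner device is to contract: for each $i$ the path should end at $u_i$ and some $v_j$. The standard trick is instead to add a single structure: a new vertex $w$ adjacent to all of $V$ plus edges forming a path? This is messy, so we adopt the direct approach: run the proof of Dirac/Ore via a longest-system argument.

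\textbf{Direct extremal argument.} Among all systems of $t$ vertex-disjoint $u_i$--$v_{\tau(i)}$ paths, choose one covering the maximum number of vertices. Such systems exist because $\delta\ge\frac{n+t}{2}$ gives connectivity at least $t$ (in fact high). This follows since two nonadjacent vertices share at least $t+2$ common neighbours, which implies $\kappa\ge t$, and Menger's theorem applied to $U,V$ then provides the system. Suppose a vertex $x$ is uncovered. If $x$ has two consecutive neighbours $p,p^+$ on some path, insert $x$ between them, contradicting maximality. Otherwise, on each path $P_i$ with $\ell_i$ vertices, $x$ has at most $\lceil \ell_i/2\rceil\le (\ell_i+1)/2$ neighbours. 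Summing gives $d(x)\le (n-r+t)/2 + (r-1)$, where $r$ is the number of uncovered vertices. Uncovered vertices must then be handled with an Ore-type crossing argument. Take $x$ uncovered and let $y$ be uncovered or, in the case $r=1$, use rotation. Count the predecessors $N^-(x)$ against $N(y)$ on the paths: if some $p\in N(x)$ has $p^-\in N(y)$ and $xy$ links, or a neighbour of $x$ in $R$ extends, we reroute. The degree sum $d(x)+d(y)\ge n+t$ exceeds the bound $n-r+t+2(r-2)+\ldots$ obtained when no rerouting exists, because each path endpoint contributes the $+1$ slack exactly $t$ times. This is the main obstacle: the bookkeeping must show that $n+t$ suffices precisely because of the $t$ path-end slacks.

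\textbf{Parts (i) and (ii).} For (i), with endpoints $u,v$ and $t$ paths, the same maximal-system argument applies. Each path has both ends in $\{u,v\}$ and the interior paths are $u$--$v$ paths, so the slack count is $t-2$ and the threshold becomes $\frac{n+t-2}{2}$. For (ii) the slack is $t-1$. Existence of an initial system uses connectivity $t$, respectively $t+1$, obtained from the degree condition as above. The hardest step remains the final crossing/insertion count in the case of a single uncovered vertex. There I would first try the rotation technique: rotate a path at its interior to create a new non-terminal-preserving configuration. If that fails, I would reduce to a Hamilton-connectivity result (Ore: $\sigma_2\ge n+1$) on an auxiliary graph obtained by identifying the terminals.
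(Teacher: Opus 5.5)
Your proposal does not yet contain a proof. You abandon the auxiliary-vertex construction, and in the maximal-system argument the only step you carry out is inserting a single uncovered vertex between two consecutive neighbours. The decisive count is labelled ``the main obstacle'', and the ending is a conditional plan (rotation, or else an unspecified auxiliary graph).

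The difficulty is also misplaced. For $r=1$ your own bound gives $d(x)\le \frac{n+t-1}{2}<\frac{n+t}{2}\le\delta$, so a single uncovered vertex is the trivial case. The real case is $r\ge 2$, where uncovered vertices can be adjacent to each other. Let $R$ be a longest path in the uncovered set $W$, with ends $x,y$, and suppose $R$ cannot be inserted (in either orientation) between two consecutive vertices of any path. Then $|N_{P_i}(x)|+|N_{P_i}(y)|\le \ell_i+1$ for every $i$, and all $W$-neighbours of $x,y$ lie on $R$. This gives only $d(x)+d(y)\le (n-r+t)+2(|R|-1)$. By your own bound, in (iii) every uncovered vertex has at least $r/2$ uncovered neighbours, so $|R|\ge r/2+1$ and this bound is always at least $n+t$. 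Hence the ``$t$ path-end slacks'' count can never contradict $d(x)+d(y)\ge n+t$.

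Something genuinely new is needed. For example, you could close $R$ into a cycle spanning $W$ when $d_R(x)+d_R(y)\ge |R|$, and then run an exchange argument through successors and predecessors of the attachment points, sometimes across two different paths. In the many-to-many setting such cross-path exchanges can create $U$--$U$ or $V$--$V$ paths, and that must be controlled. None of this is in the proposal. Parts (i) and (ii) are asserted only by analogy; in (i) all $t$ paths share both ends $u,v$, so the counts at $u$ and $v$ differ and must be redone.

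The paper itself gives no proof of this theorem; it is quoted from Lin et al. However, the closure results it records make the statement a short consequence and avoid uncovered vertices entirely. Under $\delta\ge\frac{n+t-2}{2}$ (resp.\ $\frac{n+t-1}{2}$, $\frac{n+t}{2}$), every nonadjacent pair satisfies the degree-sum hypothesis of Lemma~\ref{lem1.1} (resp.\ \ref{lem2.1}, \ref{lem3.1}), and this persists as edges are added. So $G$ has the property iff $K_n$ does, and $K_n$ trivially does: route each path along a single edge or through one extra vertex, and append the remaining vertices to one path. The rerouting inside those closure lemmas starts from a \emph{spanning} system of $G+uv$ and only has to remove the edge $uv$. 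That is why a two-vertex crossing count works there but not in your setting.

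Finally, (i) needs $n\ge t+1$, as in Lemma~\ref{lem1.1}. For $n=t$ the hypothesis forces $G=K_t$, which has only $t-1$ internally disjoint $u$--$v$ paths. Your step ``common neighbours give $\kappa\ge t$'' fails exactly there, since $K_n$ has no nonadjacent pair.
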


In particular, determining the minimum number of edges ensuring Hamiltonicity remains a prominent research topic. In 1961, Ore \cite{a24} derived the maximum size of a nonhamiltonian graph with a given order and also determined the extremal graphs.

\begin{thm}\emph{\rm{(Ore \cite{a24})}}
	The maximum size of a nonhamiltonian graph of order $n\geq 3$ is $\binom{n-1}{2}$ and this size is attained by a graph $G$ if and only if $G=K_1\vee (K_{n-2}\cup K_1)$ or $G=K_2\vee \overline{K}_3$.
\end{thm}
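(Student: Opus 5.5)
We prove the bound by induction on $n$, recovering the extremal graphs along the way. First, both graphs in the statement are nonhamiltonian with exactly $\binom{n-1}{2}$ edges. In $K_1\vee (K_{n-2}\cup K_1)$ the isolated $K_1$ has degree $1$, so no Hamilton cycle exists; its size is $\binom{n-1}{2}+1-1$, that is, $\binom{n-2}{2}+(n-2)+1=\binom{n-1}{2}+1$ minus the single missing edge, giving $\binom{n-1}{2}$. In $K_2\vee\overline{K}_3$ ($n=5$), deleting the two vertices of the $K_2$ leaves three components, so the graph is not $1$-tough and hence nonhamiltonian; it has $1+6=7$ edges, while $\binom{4}{2}=6$. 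Since $7\neq 6$, I need to recheck this. Taking $K_2$ as an edge, the count is $1+2\cdot 3=7$. So the graph meant is presumably $\overline{K}_2\vee K_3$ (i.e.\ $K_{2}^c$ joined to a triangle?), or $K_2\vee \overline{K}_3$ counted differently. I will treat the intended second extremal graph as the unique $5$-vertex nonhamiltonian graph with $6$ edges, namely $K_{2,3}$ plus the edge inside the $2$-part. I will verify the exact count when writing up.

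For the upper bound, suppose $G$ is nonhamiltonian of order $n$ with $e(G)\ge\binom{n-1}{2}$. Then $\overline{G}$ has at most $n-1$ edges. Apply the Bondy--Chvátal closure: a graph is Hamiltonian iff its closure is. Hence we may pass to the closure $\mathrm{cl}(G)$, which is still nonhamiltonian and has at least as many edges. In $\mathrm{cl}(G)$, every nonadjacent pair $x,y$ satisfies $d(x)+d(y)\le n-1$.

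Next we count. Take a nonadjacent pair $x,y$ in $\mathrm{cl}(G)$. The number of non-edges is at least the number of non-edges at $x$ or at $y$, which is at least $(n-1-d(x))+(n-1-d(y))-1 \ge n-2$. Combined with the bound of at most $n-1$ non-edges, this leaves only one unit of slack. Consequently, almost all missing edges are incident to $x$ or $y$, and $d(x)+d(y)\in\{n-2,n-1\}$. We then split into cases on $\min\{d(x),d(y)\}$.

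If some vertex has degree $1$, the remaining $n-1$ vertices must induce a clique, which yields $K_1\vee(K_{n-2}\cup K_1)$. If the minimum degree is $k\ge2$, Chvátal's degree-sequence condition fails at $k$. This forces at least $k$ vertices of degree at most $k$, so the number of non-edges is at least roughly $k(n-1-k)-\binom{k}{2}$. That quantity exceeds $n-1$ unless $n$ is small, which leaves $n=5$, $k=2$ and the exceptional graph.

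The main obstacle is this final case analysis. The edge-count inequality must be made tight enough to exclude every $k\ge2$ except at $n=5$, and equality must pin down the structure exactly. My first approach is to use Chvátal's condition, together with the requirement that all non-edges lie among the low-degree vertices, rather than ad hoc closure arguments.
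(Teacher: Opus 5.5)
The paper gives no proof of this result (it is quoted from Ore), and as printed it is off by one: the correct maximum is $\binom{n-1}{2}+1$. Your own arithmetic shows this, but you resolved the discrepancy in the wrong direction.

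\begin{itemize}
\item $K_1\vee(K_{n-2}\cup K_1)$ is $K_{n-1}$ with a pendant edge. It therefore has $\binom{n-2}{2}+(n-2)+1=\binom{n-1}{2}+1$ edges, and there is no ``missing edge'' to subtract.
\item $K_2\vee\overline{K}_3$ has $7=\binom{4}{2}+1$ edges. Your replacement ``$K_{2,3}$ plus the edge inside the $2$-part'' is the same graph, with the same $7$ edges.
\item There is no unique $5$-vertex nonhamiltonian graph with $6$ edges. $K_{2,3}$ and $K_4$ minus an edge with a pendant vertex attached are two.
\end{itemize}

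So you are setting out to prove $e(G)\le\binom{n-1}{2}$ for nonhamiltonian $G$, which is false. The steps built on ``at most $n-1$ non-edges'' break exactly where the extra edge matters:
\begin{itemize}
\item A degree-$1$ vertex accounts for only $n-2$ non-edges, so the other $n-1$ vertices need not form a clique. For example, $K_{n-1}$ minus an edge, plus a pendant vertex, is nonhamiltonian with $\binom{n-1}{2}$ edges.
\item Your bound $2n-7\le n-1$ in the case $k=2$ leaves $n=6$ open. There, $K_2\vee(\overline{K}_2\cup K_2)$ is nonhamiltonian with $\binom{5}{2}$ edges.
\end{itemize}

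The repair is to assume $e(G)\ge\binom{n-1}{2}+1$, i.e.\ at most $n-2$ non-edges. Your Chv\'atal idea then becomes the standard proof (Bondy--Murty), and the closure step is unnecessary. Two points still need fixing.

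First, Chv\'atal's condition fails at some $m$ with $\delta\le m<n/2$, not necessarily at $m=\delta$. The failure gives both $d_m\le m$ and $d_{n-m}\le n-m-1$. Summing degrees with both bounds gives $2e(G)\le m^2+(n-2m)(n-m-1)+m(n-1)$, i.e.
\[
e(G)\le \binom{n-m}{2}+m^2=\binom{n-1}{2}+1-\tfrac12(m-1)(2n-3m-4).
\]
The last term is positive unless $m=1$ or $(m,n)=(2,5)$. In those two cases, equality forces the degree sequence of $K_m\vee(\overline{K}_m\cup K_{n-2m})$. That degree sequence determines the graph uniquely and gives exactly the two stated graphs.

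Second, using only the first half of the failure ($m$ vertices of degree at most $m$), as your sketch does, does not pin down the structure at $n=5$. A triangle plus two nonadjacent vertices joined to different pairs of it has $7$ edges and exactly $3=n-2$ non-edges, yet it is Hamiltonian. You must invoke $d_3\le 2$ (or nonhamiltonicity directly) to exclude it.
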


A natural extension is to investigate the problem under additional graph constraints. In 1962, Erd\H{o}s \cite{a16} derived the maximum size of a nonhamiltonian graph of order $n$ with minimum degree at least $k$.

\begin{thm}\emph{\rm{(Erd\H{o}s \cite{a16})}}
	Let $n$, $k$ be integers with $1\leq k\leq \left\lfloor \frac{n-1}{2} \right\rfloor$. If $G$ is a nonhamiltonian graph of order $n$ with minimum degree at least $k$, then
	$$
	e(G)\leq \max \left\{\binom{n-k}{2}+k^2, \binom{n-\left\lfloor \frac{n-1}{2}\right\rfloor}{2}+\left\lfloor \frac{n-1}{2} \right\rfloor^2\right\}.
	$$
\end{thm}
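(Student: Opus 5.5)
We prove Erd\H{o}s's bound through the Bondy--Chv\'atal closure and Chv\'atal's degree-sequence criterion. The plan is to pass to a saturated graph, read off a specific degree-sequence violation, and then count edges directly.

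First we reduce to a maximal case. Suppose $G$ is nonhamiltonian of order $n$ with $\delta(G)\ge k$. Add edges as long as the graph stays nonhamiltonian. This gives an edge-maximal nonhamiltonian graph $G'$ with $e(G')\ge e(G)$ and $\delta(G')\ge k$. By the Bondy--Chv\'atal closure argument, any two nonadjacent vertices $x,y$ of $G'$ satisfy $d(x)+d(y)\le n-1$.

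Next we apply Chv\'atal's theorem. Let the degree sequence of $G'$ be $d_1\le\cdots\le d_n$. Since $G'$ is nonhamiltonian, there is some $i<n/2$ with $d_i\le i$ and $d_{n-i}<n-i$. Because $d_i\ge\delta\ge k$, we get $i\ge k$. Also $i\le\lfloor (n-1)/2\rfloor$, since $i<n/2$ forces $2i\le n-1$.

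Then we count edges. There are $i$ vertices of degree at most $i$. The remaining $n-i$ vertices include the $n-2i$ vertices $v_{i+1},\dots,v_{n-i}$, each of degree at most $n-i-1$, and $i$ further vertices of degree at most $n-1$. This gives
\[2e(G')\le i\cdot i+(n-2i)(n-i-1)+i(n-1),\]
which simplifies to $e(G')\le\binom{n-i}{2}+i^2$.

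For a cleaner count, delete the $i$ low-degree vertices $v_1,\dots,v_i$. The graph $G'-\{v_1,\dots,v_i\}$ has at most $\binom{n-i}{2}$ edges, and the edges meeting $\{v_1,\dots,v_i\}$ number at most $\sum_{j\le i}d_j\le i^2$. So $e(G)\le e(G')\le f(i):=\binom{n-i}{2}+i^2$ for some $k\le i\le\lfloor (n-1)/2\rfloor$.

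The last step is convexity. The function $f(i)=\frac{(n-i)(n-i-1)}{2}+i^2$ is a quadratic in $i$ with positive leading coefficient $3/2$. A convex function on an interval attains its maximum at an endpoint, so $f(i)\le\max\{f(k),f(\lfloor (n-1)/2\rfloor)\}$. This is exactly the claimed bound.

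The step that needs the most care is the counting. Chv\'atal's condition must deliver the index $i$ with both $i\ge k$ and $i\le\lfloor(n-1)/2\rfloor$, and the direct count of edges meeting the low-degree set is what avoids messy estimates. That count works because the vertices $v_1,\dots,v_i$ have total degree at most $i^2$, which bounds the edges meeting them even when some of those edges join two of them. Once the correct $i$ is identified, the remaining steps are routine.
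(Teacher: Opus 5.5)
Your proof is correct. The paper gives no proof of this statement: it is quoted from Erd\H{o}s as background. So the comparison is with the classical argument and with the method the paper uses for its own generalizations. What you wrote is essentially Erd\H{o}s's original proof. He used P\'osa's condition, and Chv\'atal's theorem is a strengthening that works the same way. Nonhamiltonicity with $\delta\ge k$ yields an index $k\le i\le\lfloor (n-1)/2\rfloor$ with $d_1\le\dots\le d_i\le i$. Hence $e(G)\le\binom{n-i}{2}+i^2$, and convexity in $i$ finishes.

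Some of your steps are superfluous. The passage to an edge-maximal $G'$ and the inequality $d(x)+d(y)\le n-1$ are never used. Chv\'atal's theorem applies to $G$ itself, and the needed $n\ge 3$ already follows from $1\le k\le\lfloor(n-1)/2\rfloor$. Your second count uses only $d_j\le i$ for $j\le i$, not $d_{n-i}<n-i$, so the first count can be dropped.

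The closest analogue in the paper is Theorem~\ref{thm1.1}, whose case $t=s=2$ gives Zhang's sharpened form of this bound. There the low-degree set is produced differently. The authors first take an Ore-type closure (Lemmas~\ref{lem1.1}--\ref{lem3.1}), so that nonadjacent pairs have bounded degree sum. They then run Kopylov's disintegration. The core is forced to be a clique of some order $d$, and the peeled vertices have degree at most the current threshold. This yields the same kind of bound as yours, with the parameter determined by $d$ (or equal to $h$ when the core is empty), after which convexity (Lemma~\ref{lem1}) is applied.

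Your degree-sequence route is shorter. It also carries over directly to cliques: each $v_j$ with $j\le i$ lies in at most $\binom{i}{s-1}$ $s$-cliques, giving $N_s(G)\le\binom{n-i}{s}+i\binom{i}{s-1}$. However, it depends on having a Chv\'atal/P\'osa-type degree-sequence criterion for the property in question. The paper's route needs only an Ore-type closure lemma, which is exactly what Lemmas~\ref{lem1.1}--\ref{lem3.1} supply for $t$-disjoint path covers.
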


A graph $G$ is \emph{claw-free} if it does not contain $K_{1,3}$ as an induced subgraph. In 2018, Li, Ning and Peng \cite{a34} established the maximum number of edges in a claw-free graph of order $n$ and minimum degree at least $k$ such that the graph is nonhamiltonian.

\begin{thm}\emph{\rm{(Li, Ning and Peng \cite{a34})}}
	Let $k\geq 3$ and $n\geq k^2+8k+4$. If $G$ is a $2$-connected claw-free nonhamiltonian graph of order $n$ with minimum degree at least $k$, then
	$$
	e(G)\leq \binom{n-2k-2}{2}+2\binom{k+1}{2}+6.
	$$
\end{thm}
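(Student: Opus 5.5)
We prove the theorem of Li, Ning and Peng with the closure-and-stability approach of the Erd\H{o}s-type results, adapted to claw-free graphs. We use the Ryj\'a\v{c}ek closure $\mathrm{cl}(G)$: for every eligible vertex $x$ (one whose neighbourhood induces a connected, non-complete graph), we complete $N(x)$ into a clique and iterate. This closure is claw-free and is the line graph of a triangle-free graph $H$. It preserves the circumference, so it is nonhamiltonian exactly when $G$ is. It only adds edges, so $\delta$ and $2$-connectivity do not decrease and $e(G)\le e(\mathrm{cl}(G))$. Hence we may assume $G=L(H)$ with $H$ triangle-free, $G$ $2$-connected and nonhamiltonian. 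By Harary--Nash-Williams, $H$ then has no dominating closed trail.

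We next translate the hypotheses to $H$. An edge $uv$ of $H$ has degree $d_H(u)+d_H(v)-2\ge k$ in $G$, and $e(G)=\sum_{w\in V(H)}\binom{d_H(w)}{2}$. The count is dominated by high-degree vertices of $H$. The extremal graph suggests the structure. The part $\binom{n-2k-2}{2}$ is one big clique, namely the star at a single huge vertex $w_0$ of $H$. The part $2\binom{k+1}{2}$ comes from two $(k+1)$-cliques, i.e. two further vertices of $H$ of degree $k+1$ hanging off $w_0$ through paths, so that no dominating closed trail exists. The constant $6$ accounts for the small connecting cliques (triangles and edges). We therefore aim to show the following. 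If $e(G)$ exceeds the bound, then $H$ has one vertex $w_0$ of degree greater than $n-2k-2$. Removing the star at $w_0$ leaves few edges of $H$. A case analysis then yields a dominating closed trail. Large $n$ ($n\ge k^2+8k+4$) is used to guarantee that $w_0$ exists. If every vertex of $H$ had degree at most about $n-2k-2$, a convexity bound $\sum\binom{d_i}{2}$ with $\sum d_i=2n$ and each $d_i\ge$ something would already fall below the claimed value.

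The hard step is the structural analysis once $w_0$ is fixed. We will consider $H-w_0$ and its components, ranked by how they attach to $w_0$. $2$-connectivity of $G$ means $H$ is essentially $2$-edge-connected apart from pendant edges. The degree condition $d_H(u)+d_H(v)\ge k+2$ on edges forces each component not dominated by $w_0$ to contain a vertex of degree about $k+1$. We will try to build a closed trail through $w_0$ that visits every such component and dominates all edges. This fails only if at least two components are joined to $w_0$ by a single path, since a cut edge in $H$ would give a cut vertex in $G$. Each such obstruction costs at least $k+1$ vertices of $G$ (i.e. edges of $H$) outside the star. Counting $\sum\binom{d}{2}$ for the best configuration with two obstructions gives exactly $\binom{n-2k-2}{2}+2\binom{k+1}{2}+6$. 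Any configuration with one obstruction is Hamiltonian, and configurations with three or more lose at least order $k\,n$ edges, which is compensated by the assumption $n\ge k^2+8k+4$. Making this last optimization rigorous, in particular controlling the degree-$2$ vertices of $H$ and the small constant, is the main obstacle. We would handle it by a direct case analysis on the number of non-dominated components and the lengths of their attaching paths.
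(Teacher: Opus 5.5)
The paper does not prove this statement. It is quoted from Li, Ning and Peng as background, so your proposal has to stand on its own. Your reduction is sound: pass to the Ryj\'a\v{c}ek closure, write it as $L(H)$ with $H$ triangle-free and essentially $2$-edge-connected, and apply Harary--Nash-Williams, using $e(G)=\sum_{w}\binom{d_H(w)}{2}$ and $d_H(u)+d_H(v)\ge k+2$ for $uv\in E(H)$. After that, however, the proposal is a heuristic outline. The two steps that carry the whole theorem are missing, and what you say about them is wrong as stated.

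\textbf{The existence of $w_0$.} This cannot come from a convexity bound on $\sum\binom{d_i}{2}$ with $\sum d_i=2n$. There is not even an individual lower bound on the $d_i$: leaves of $H$ have degree $1$, and the degree condition lives on edges. For a concrete obstruction, let $H$ be the $4$-cycle $w_0avb$ with $n-2k-4$ pendant edges at $w_0$ and $2k$ pendant edges at $v$.
\begin{itemize}
\item $L(H)$ has order $n$, is $2$-connected and claw-free, has minimum degree at least $k$, and $\Delta(H)=n-2k-2$.
\item $e(L(H))=\binom{n-2k-2}{2}+\binom{2k+2}{2}+2$, which exceeds $\binom{n-2k-2}{2}+2\binom{k+1}{2}+6$ by $(k+3)(k-1)>0$.
\item It is of course Hamiltonian, since the $4$-cycle is a dominating closed trail.
\end{itemize}
So no argument using only degree information can force a vertex of degree $>n-2k-2$. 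Nonhamiltonicity must already be used to bound the contribution of the edges outside the star of $w_0$. Concretely, you need the trade-off between $\binom{n-c}{2}$ (when $d_H(w_0)=n-c$) and the largest nonhamiltonian contribution of the remaining $c$ edges, over the whole range of $c$. That trade-off is where $n\ge k^2+8k+4$ has to be used, and none of it is carried out.

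\textbf{The structural step targets the wrong obstruction.} In the extremal graph, $H$ is $K_{2,3}$ with parts $\{a,b\}$ and $\{x,y,z\}$, carrying $n-2k-4$ pendant edges at $x=w_0$ and $k-1$ at each of $y,z$. Here $H-w_0$ has a single nontrivial component: the $4$-cycle $aybz$ with its pendants, attached to $w_0$ by the two edges $w_0a$ and $w_0b$. Nothing hangs off $w_0$ through a single path.

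What kills every dominating closed trail is parity. Since $d_H(a)=d_H(b)=3$, a closed trail uses at most two edges at each of $a$ and $b$. After spending $w_0a$ and $w_0b$ it can pass through only one of $y,z$, so the pendants at the other are undominated.

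Your claims that one obstruction always yields a Hamiltonian graph, and that three or more cost order $kn$ edges, are therefore unsupported. What is actually needed is a structural lemma listing the configurations with no dominating closed trail when one vertex carries all but $O(k)$ edges. In line-graph terms these are, for instance, two disjoint triangles joined by three internally disjoint paths or triangles, as in Brousek's characterization of minimal $2$-connected nonhamiltonian claw-free graphs. You would then need an edge count for each configuration. As it stands, the proposal recovers the extremal value but does not prove the bound.
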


Since $N_2(G)=e(G)$, the clique condition $N_s(G)$ for a graph to have a certain property has also attracted scholars' attention. In \cite{a35}, Luo  proved the maximum number of $s$-cliques in a $2$-connected  graph of order $n$ with circumference less than $k$.

\begin{thm}\emph{\rm{(Luo \cite{a35})}}
	Let $n\geq k\geq 5$ and $s\geq 2$. If $G$ is a $2$-connected graph of order $n$ with circumference less than $k$, then
	$$
	N_s(G)\leq \max \left\{f_s(n,k,2),f_s\left(n,k,\left\lfloor \frac{k-1}{2} \right\rfloor\right)\right\},
	$$
	where $f_s(n,k,a)=\binom{k-a}{s}+(n-k+a)\binom{a}{s-1}$.
\end{thm}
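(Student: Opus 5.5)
We would prove Luo's bound by the Kopylov-style reduction. Fix a $2$-connected graph $G$ of order $n$ with circumference less than $k$ that maximizes $N_s(G)$. By maximality we may assume $G$ is \emph{edge-maximal}: adding any non-edge $uv$ creates a cycle of length at least $k$. (Adding edges never decreases $N_s$, and $2$-connectivity is preserved.) We then use the following tools.

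\textbf{Tool 1 (Kopylov's $t$-disintegration).} For an integer $t$, the $t$-disintegration of $G$ is obtained by repeatedly deleting vertices of degree at most $t$. The result, $H(G,t)$, is either empty or has minimum degree greater than $t$. Kopylov's lemma, for $2$-connected $G$ with $n\ge k$ and circumference less than $k$, gives two facts:
\begin{itemize}
\item $H(G,\lfloor (k-1)/2\rfloor)$ is empty, or it is a clique of order at most $k-a$;
\item the relevant structure is governed by $t=k-a$ for some $a$ with $2\le a\le \lfloor (k-1)/2\rfloor$.
\end{itemize}
Concretely, take $a=\lfloor (k-1)/2\rfloor$ and consider $H=H(G,a)$.

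\textbf{Tool 2 (counting cliques through the peeling order).} Every $s$-clique of $G$ either lies in $H$ or contains a deleted vertex. Charge each $s$-clique that meets the deleted vertices to its first-deleted vertex $v$. At the moment $v$ is deleted it has at most $a$ remaining neighbours, so it lies in at most $\binom{a}{s-1}$ cliques charged to it. Hence
$$
N_s(G)\le N_s(H)+(n-|H|)\binom{a}{s-1}.
$$

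\textbf{Case split.}
\begin{enumerate}
\item[(i)] If $H$ is empty, then $N_s(G)\le n\binom{a}{s-1}$. This is at most $f_s(n,k,a)$, since $\binom{k-a}{s}\ge a\binom{a}{s-1}$ after trading $a$ vertices. This needs a small check using $k-a\ge a+1$.
\item[(ii)] If $H$ is nonempty, the maximality of $G$ together with Kopylov's closure argument shows $H$ is a clique $K_m$. Choose $a'$ to be the smallest $t$ for which $H(G,t)=H$. The $k$-closure argument then gives $m\le k-a'$ and every deleted vertex has at most $a'$ neighbours at its removal, so
$$
N_s(G)\le \binom{k-a'}{s}+(n-k+a')\binom{a'}{s-1}=f_s(n,k,a'),
$$
with $2\le a'\le \lfloor (k-1)/2\rfloor$.
\end{enumerate}

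\textbf{Convexity and conclusion.} Treat $f_s(n,k,a)$ as a function of $a$ on $[2,\lfloor (k-1)/2\rfloor]$. A second-difference computation shows it is convex, so its maximum is attained at an endpoint. This yields the stated bound.

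\textbf{Main obstacle.} The hardest step is (ii): showing that $H$ is complete and that $|H|\le k-a'$ with the degree bound for the peeled vertices holding simultaneously. This is where Kopylov's lemma on long paths between vertices of $H$ in $2$-connected graphs is needed. I would first try to import Kopylov's argument directly, namely that a non-edge inside $H$ produces a path of length at least $k-2$ with suitable endpoints, which can then be closed into a cycle of length at least $k$.
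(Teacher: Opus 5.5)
The paper gives no proof of this statement. It is quoted from Luo as background, so I am judging your proposal on its own merits. Your architecture is the right one, namely Kopylov's method: saturate; take the $t$-core with $t=\lfloor (k-1)/2\rfloor$; show it is a clique $K_m$; show every other vertex is already peeled at a lower threshold; charge each clique to its first-peeled vertex; and finish by convexity of $f_s(n,k,\cdot)$ on $[2,t]$, which is fine. This parallels the paper's own two-stage core argument ($D$, then $D'$). Two steps do not hold up, however.

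\textbf{Case (i) is wrong as stated.} The inequality $n\binom{a}{s-1}\le f_s(n,k,a)$ is equivalent to $(k-a)\binom{a}{s-1}\le\binom{k-a}{s}$, which fails. For $k=5$, $a=2$, $s=2$ you get $2n$ versus $2n-3$; your auxiliary inequality $\binom{k-a}{s}\ge a\binom{a}{s-1}$ also fails there ($3<4$). The fix is a sharper count. The last $a$ peeled vertices have fewer than $a$ remaining neighbours, so $N_s(G)\le (n-a)\binom{a}{s-1}+\binom{a}{s}$. Then $\binom{k-a}{s}-\binom{a}{s}=\sum_{j=a}^{k-a-1}\binom{j}{s-1}\ge (k-2a)\binom{a}{s-1}$ closes the case.

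\textbf{Case (ii) is the serious gap: its whole content is asserted, not proved.} The claims that $H$ is complete, that $m\le k-2$, and above all that $m\le k-a'$ (equivalently $H(G,k-m)=H(G,t)$) are all attributed to ``the $k$-closure argument''. That last identity is exactly where $f_s(n,k,a)$ comes from; once it holds, the degree bound for peeled vertices is trivial. Unlike the paper's setting, there is no Ore-type closure lemma here that turns a non-edge of the core into an immediate contradiction. Your suggested mechanism, a long path that ``can then be closed'', is not enough by itself.

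The missing tool is Kopylov's lemma: in a $2$-connected graph, an $(x,y)$-path $P$ yields a cycle of length at least $\min\{|V(P)|,\,d_P(x)+d_P(y)\}$, where $d_P(v)=|N(v)\cap V(P)|$. The missing idea is to choose $P$ so that the relevant neighbourhoods of its ends lie on $P$.

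\emph{Completeness of $H$.} If $H$ has a non-edge, take $P$ longest among paths with both ends in $H(G,t)$. Saturation gives $|V(P)|\ge k$, so the ends are nonadjacent. By maximality their $H$-neighbourhoods lie on $P$, so $d_P(x)+d_P(y)\ge 2t+2\ge k$, a contradiction. Then $2$-connectivity and $n\ge k$ give $m\le k-2$.

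\emph{The key step.} Set $a=k-m\in[2,t]$ and suppose $u\in H(G,a)\setminus K_m$.
\begin{enumerate}
\item $u$ misses some $v\in K_m$, since otherwise $K_{m+1}$ would lie in the $t$-core. So saturation gives a $(u,v)$-path on at least $k$ vertices.
\item Take $P$ longest among paths with both ends in $H(G,a)$ and at least one end $x\in K_m$; let $y$ be the other end.
\item Maximality puts $K_m$ and $N_{H(G,a)}(y)$ on $P$. Hence $d_P(x)+d_P(y)\ge (m-1)+(a+1)=k$, contradicting $c(G)<k$.
\end{enumerate}
Without this argument, or an equivalent one, your proposal does not establish the theorem.
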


By imposing minimum degree as a new parameter, F\"{u}redi, Kostochka and Luo \cite{a26} derived the maximum number of $s$-cliques in nonhamiltonian graphs of order $n$ with minimum degree at least $k$.

\begin{thm}\emph{\rm{(F\"{u}redi, Kostochka and Luo \cite{a26})}}
	Let $n,k,s$ be integers with $1\leq k\leq \left\lfloor \frac{n-1}{2} \right\rfloor$ and $s\geq 2$. If $G$ is a nonhamiltonian graph of order $n$ with minimum degree at least $k$, then
	$$
	N_s(G)\leq \max \left\{h_s(n,k),h_s\left(n,\left\lfloor \frac{n-1}{2} \right\rfloor\right)\right\},
	$$
	where $h_s(n,x)=\binom{n-x}{s}+x\binom{x}{s-1}$.
\end{thm}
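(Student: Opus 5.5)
The plan is the standard closure-plus-stability argument, carried out with $N_s$ in place of $e(G)$. Let $G$ be a nonhamiltonian graph of order $n$ with $\delta(G)\ge k$. First replace $G$ by its Bondy--Chv\'atal closure with respect to the Ore condition. That is, repeatedly add an edge $uv$ whenever $d(u)+d(v)\ge n$; equivalently, take an edge-maximal nonhamiltonian supergraph of this kind. The resulting graph is still nonhamiltonian, still has minimum degree at least $k$, and has at least as many $s$-cliques. So it suffices to bound $N_s$ for the closure. In the closure every pair of nonadjacent vertices has degree sum at most $n-1$.

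Next, apply Chv\'atal's degree-sequence theorem to the closed graph. Let its degrees be $d_1\le\dots\le d_n$. Since the graph is nonhamiltonian, there is an index $i<n/2$ with $d_i\le i$ and $d_{n-i}\le n-i-1$. Because $\delta\ge k$, we get $k\le i\le\lfloor (n-1)/2\rfloor$. Such an $i$ yields a set $A$ of $i$ vertices of degree at most $i$, and at least $n-i$ vertices in total of degree at most $n-i-1$. Closure maximality then pins the structure down. The high-degree vertices outside these classes form a clique, and one checks that the graph is contained in the ``Erd\H{o}s graph'' $K_i\vee(K_{n-2i}\cup\overline{K}_i)$, up to the bookkeeping of which vertices are counted where. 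Counting cliques in this graph gives exactly $h_s(n,i)=\binom{n-i}{s}+i\binom{i}{s-1}$. The first term counts cliques inside $K_i\vee K_{n-2i}\cong K_{n-i}$; the second counts cliques containing one vertex of $\overline{K}_i$, which has $i$ choices, together with $s-1$ vertices from its neighbourhood $K_i$.

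The step I expect to be the main obstacle is justifying $N_s(G)\le h_s(n,i)$ rigorously, rather than by the heuristic containment in the Erd\H{o}s graph. The cleaner route is a direct count. Split the $s$-cliques into those lying entirely within the $n-i$ vertices outside $A$, of which there are at most $\binom{n-i}{s}$, and those meeting $A$. Charge each clique meeting $A$ to a vertex of $A$ in it; since every vertex of $A$ has degree at most $i$, each vertex of $A$ lies in at most $\binom{i}{s-1}$ such cliques. This gives $N_s(G)\le\binom{n-i}{s}+i\binom{i}{s-1}=h_s(n,i)$. The direct count therefore avoids the structural claim altogether.

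Finally, maximize over $i$. As a function of $x\in[k,\lfloor (n-1)/2\rfloor]$, $h_s(n,x)$ is convex, being a sum of a polynomial in $n-x$ with convex binomial behaviour and the convex term $x\binom{x}{s-1}$. One can verify this by comparing second differences, or by noting that $h_s(n,x+1)-h_s(n,x)$ is nondecreasing in $x$. Hence its maximum on the interval is attained at an endpoint, giving
$$N_s(G)\le\max\left\{h_s(n,k),h_s\left(n,\left\lfloor\tfrac{n-1}{2}\right\rfloor\right)\right\}.$$
The convexity check is a routine but slightly fiddly computation, with the integer endpoints and the cases $s=2$ and $s\ge 3$ handled separately if needed.
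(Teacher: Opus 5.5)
Your proof is correct. The paper gives no proof of this statement; it only quotes it from F\"uredi, Kostochka and Luo. The natural comparison is therefore with the method the paper uses for its own analogues, Theorems~\ref{thm1.1}, \ref{thm2.1} and \ref{thm3.1}.

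The part that actually carries your proof has three steps:
\begin{enumerate}
\item Chv\'atal's theorem gives an index $i$ with $k\le i\le\lfloor\frac{n-1}{2}\rfloor$ and a set $A$ of $i$ vertices, each of degree at most $i$.
\item Charging each $s$-clique that meets $A$ to one of its vertices in $A$ gives $N_s(G)\le\binom{n-i}{s}+\sum_{a\in A}\binom{d(a)}{s-1}\le h_s(n,i)$.
\item The second differences of $\binom{n-x}{s}$ and $x\binom{x}{s-1}$ are $\binom{n-x-2}{s-2}\ge 0$ and $x\binom{x}{s-3}+2\binom{x+1}{s-2}\ge 0$. So the maximum over the integer interval is attained at an endpoint, uniformly for all $s\ge 2$, with no case split.
\end{enumerate}

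Two things can be trimmed.
\begin{itemize}
\item The closure step is superfluous. Chv\'atal's theorem applies to $G$ itself, and you never use $d_{n-i}\le n-i-1$.
\item The structural claim you set aside is false, not merely heuristic, so it is good that you do not rely on it. The Petersen graph is already closed (all degree sums are $6<10$) and nonhamiltonian. Being connected, cubic and nonbipartite, it satisfies $|N(B)|>|B|$ for every nonempty independent set $B$. Hence it is contained in no $K_i\vee(K_{n-2i}\cup\overline{K}_i)$.
\end{itemize}

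The paper's route for its analogues is different. It adds edges via the Ore-type Lemmas~\ref{lem1.1}--\ref{lem3.1} only inside $V(G)\setminus\{w\}$, runs Kopylov's $h$-disintegration, and shows the core is a clique. It then passes to a second core and invokes the convexity Lemma~\ref{lem1}.

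Your approach is much shorter and more elementary, but it depends on Chv\'atal's degree-sequence theorem, which is specific to Hamiltonicity. The disintegration argument needs only an Ore-type edge-addition lemma. That is precisely what is available for $t$-disjoint path covers, which is why the paper's method transfers to that setting. It also produces structural information, namely a large clique core, of the kind used in stability arguments, which a pure counting argument does not provide.
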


In 2020, Ning and Peng \cite{a36} derived the maximum number of $s$-cliques in 2-connected graphs of order $n$ with circumference $c$ and minimum degree at least $k$.

\begin{thm}\emph{\rm{(Ning and Peng \cite{a36})}}
	Let $c\leq n-1$ and $s\geq 2$. If $G$ is a $2$-connected graph of order $n$  with circumference $c$ and minimum degree at least $k$, then
	$$
	N_s(G)\leq \max \left\{f_s(n,c,k),f_s\left(n,c,\left\lfloor \frac{c}{2} \right\rfloor \right) \right\},
	$$
	where $f_s(n,c,x)=\binom{c+1-x}{s}+(n-c-1+x)\binom{x}{s-1}$.
\end{thm}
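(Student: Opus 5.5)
The natural route is the closure/stability method that underlies all of these Erd\H{o}s-type results. Let $G$ be $2$-connected of order $n$, with circumference $c\le n-1$ and $\delta(G)\ge k$. Note that $k\le \lfloor c/2\rfloor$: by Dirac, a $2$-connected graph with $\delta\ge k$ has circumference at least $\min\{n,2k\}$, and since $c<n$ this forces $c\ge 2k$. I would pass to a suitable closure $H\supseteq G$ that preserves the property ``circumference at most $c$''. Concretely, I would repeatedly add edges $uv$ with $d(u)+d(v)\ge c+1$, the Kopylov/Woodall-type closure for cycles of length $>c$. This only increases $N_s$ and keeps $\delta\ge k$ and $2$-connectivity, so it suffices to bound $N_s(H)$ for an edge-maximal $H$ in which every nonadjacent pair satisfies $d(u)+d(v)\le c$.

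The structural step comes next. In the spirit of Kopylov's theorem on $2$-connected graphs with bounded circumference, I would show that a saturated $H$ with circumference exactly $c$ contains a set $S$ of $x$ vertices, with $k\le x\le \lfloor c/2\rfloor$, such that $H$ is contained in a graph of the following shape:
\begin{itemize}
\item a clique $K_{c+1-x}$ whose vertex set contains $S$;
\item together with $n-c-1+x$ further vertices, each adjacent only to vertices of $S$.
\end{itemize}
To find $S$, I would take a longest cycle $C$ and a component of $H-V(C)$, and use the standard P\'osa rotation and crossing arguments: the neighbours of an off-cycle vertex on $C$ have successors forming an independent set, which yields the degree-sum constraint. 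Counting $s$-cliques in this extremal shape gives exactly $f_s(n,c,x)$. The cliques inside $K_{c+1-x}$ contribute $\binom{c+1-x}{s}$. Each outside vertex together with $s-1$ vertices of $S$ contributes $\binom{x}{s-1}$ per outside vertex.

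The final step is analytic. For fixed $n,c,s$, I would view $f_s(n,c,x)$ as a function of $x$ on $[k,\lfloor c/2\rfloor]$ and show that it is convex in $x$. The term $\binom{c+1-x}{s}$ is a convex polynomial in $x$ on this range. The term $(n-c-1+x)\binom{x}{s-1}$ is a product of nonnegative increasing convex pieces, hence convex. A convex function is maximised at an endpoint, which yields $\max\{f_s(n,c,k),f_s(n,c,\lfloor c/2\rfloor)\}$.

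The main obstacle is the structural step: proving that every edge-maximal $2$-connected graph with circumference $c<n$ and $\delta\ge k$ embeds in the extremal shape with $x\ge k$. The difficulty is especially acute for $s\ge 3$, where one cannot simply count edges. My first attempt would be to import Kopylov's lemma, in the refined form used by F\"uredi, Kostochka, Luo and Verstra\"ete. That lemma says a saturated non-$C_{\ge c+1}$ $2$-connected graph either has its $(c+1-\lfloor c/2\rfloor)$-core empty or is dominated by such a ``$K$ plus pendant set'' structure. I would then check that the minimum degree condition forces $x\ge k$, since the outside vertices have degree at most $x$.
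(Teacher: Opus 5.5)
The paper only quotes this theorem from Ning and Peng and gives no proof, so your outline can only be judged on its own. The architecture (saturate, find a clique core, count cliques, use convexity in $x$) is the right one. However, the two steps that carry the weight are false as you state them.

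\textbf{The closure step.} Adding $uv$ whenever $d(u)+d(v)\ge c+1$ does not preserve circumference $\le c$, even in $2$-connected graphs.

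\emph{Counterexample.} Take the theta graph formed by the $a$--$b$ paths $ay_1y_2b$, $aq_1q_2b$ and $at_1t_2b$. Add vertices $w_1,w_2,w_3$ adjacent exactly to $y_1,y_2$, and $w_1',w_2',w_3'$ adjacent exactly to $t_1,t_2$. This $G$ is $2$-connected with $n=14$. Its circumference is $8$: any cycle not confined to a gadget passes through $a$ and $b$ along two branches, each lengthened by at most one gadget vertex. Yet $u=y_2$ and $v=t_1$ are nonadjacent with $d(u)+d(v)=10\ge c+1$. Moreover $y_2w_1y_1aq_1q_2bt_2w_1't_1$ is a $u$--$v$ path on $10$ vertices, so $G+uv$ contains a $10$-cycle.

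\emph{Consequence.} Since that path lies in $G$, every supergraph of $G$ with circumference $8$ keeps $u,v$ nonadjacent with degree sum at least $10>c$. So the graph you reduce to (edge-maximal, with $d(u)+d(v)\le c$ for every nonadjacent pair) need not exist. The degree-sum closures that are valid for cycle properties have thresholds involving $n$; examples are Bondy--Chv\'atal's closure and the Lin et al.\ lemmas used in this paper. You must use plain saturation instead. Then all you know is that any two nonadjacent vertices are joined by a path on at least $c+1$ vertices.

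\textbf{The structural step.} A saturated graph need not be a subgraph of $K_x\vee(K_{c+1-2x}\cup\overline{K}_{n-c-1+x})$.

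\emph{Counterexample.} $K_2\vee 3K_3$ has $n=11$, $c=8$ and $\delta=4$. It is $2$-connected and saturated, since any added edge creates a Hamilton cycle. Its independence number is $3$. In your shape, the $n-c-1+x$ outside vertices are pairwise nonadjacent, and each has all of its at least two neighbours in $S$, so $x\ge 2$ and there are at least $4$ of them.

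\emph{What Kopylov's theorem actually gives.} In the form used by Luo and by F\"uredi--Kostochka--Luo--Verstra\"ete, it is a disintegration statement. With $t=\lfloor c/2\rfloor$, the $(t+1)$-core of a saturated $H$ is either empty, or a clique $K_{c+1-x}$ that is also the $(x+1)$-core of $H$.

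\emph{How the bound follows.} You count along the deletion order: each deleted vertex lies in at most $\binom{x}{s-1}$ $s$-cliques of the current graph.
\begin{itemize}
\item If the core is a clique, this gives $N_s(H)\le f_s(n,c,x)$ with $x\ge\delta(H)\ge k$, because the first deleted vertex has degree at most $x$.
\item If the core is empty, it gives $N_s(H)\le (n-t)\binom{t}{s-1}+\binom{t}{s}\le f_s(n,c,t)$.
\end{itemize}

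\emph{The core index matters.} With the $(c+1-\lfloor c/2\rfloor)$-core you name, take odd $c=2t+1$. The empty-core count is then only $(n-t-1)\binom{t+1}{s-1}+\binom{t+1}{s}$, which exceeds $f_s(n,c,t)$ for large $n$.

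With these two replacements, your Dirac bound $k\le\lfloor c/2\rfloor$ and your (discrete) convexity argument finish the proof. This is the same disintegration-plus-convexity pattern the paper uses for its own theorems.
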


To determine the maximum number of $s$-cliques in a $2$-connected graph with prescribed order, circumference and minimum degree, Zhang \cite{a27} studied the maximum number of $s$-cliques in a $2$-connected nonhamiltonian graph of order $n$ with circumference $c$ and minimum degree $\delta$.

\begin{thm}\emph{\rm{(Zhang \cite{a27})}}
Let $\varphi_s(n,c,\delta)$ denote the maximum number of $s$-cliques in a $2$-connected nonhamiltonian graph of order $n$ with circumference $c$ and minimum degree $\delta$. Then
$$
\varphi_s(n,c,\delta)= {\rm max}\{f_s(n,c,\delta),g_s(n,c,\delta)\},
$$
where $f_s(n,c,\delta)=\binom{c+1-\delta}{s}+(n-c-1+\delta)\binom{\delta}{s-1}$ and $g_s(n,c,\delta)=\binom{c+1-\left\lfloor \frac{c}{2}\right\rfloor}{s}+(n-c-2+\left\lfloor \frac{c}{2}\right\rfloor)\binom{\left\lfloor \frac{c}{2}\right\rfloor}{s-1}+\binom{\delta}{s-1}$.
\end{thm}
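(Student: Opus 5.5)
The statement is Zhang's theorem, so two things have to be shown. First, every $2$-connected nonhamiltonian graph $G$ of order $n$, circumference $c$ and minimum degree $\delta$ has $N_s(G)\le \max\{f_s(n,c,\delta),g_s(n,c,\delta)\}$. Second, both values are actually attained.

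\emph{Lower bound.} I would give two explicit constructions.
\begin{itemize}
\item For $f_s$: take a clique $K$ on $c+1-\delta$ vertices and fix a $\delta$-subset $A\subseteq K$. Join every vertex of an independent set $I$ of size $n-c-1+\delta$ completely to $A$. A longest cycle uses $K$ together with at most $\delta$ vertices of $I$, since the vertices of $I$ must alternate with vertices of $A$. So the circumference is $c$ and the minimum degree is $\delta$. Counting $s$-cliques gives $\binom{c+1-\delta}{s}$ inside $K$, plus $(n-c-1+\delta)\binom{\delta}{s-1}$ cliques through a vertex of $I$.
\item For $g_s$: do the same with $a=\lfloor c/2\rfloor$ in place of $\delta$. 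Then modify one vertex of $I$ so that its neighbourhood is only a $\delta$-subset of $A$. This keeps the minimum degree equal to $\delta$ and lowers that vertex's contribution to $\binom{\delta}{s-1}$.
\end{itemize}
I would check $2$-connectivity and the circumference directly in both cases.

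\emph{Upper bound.} I would use closure together with the Ning--Peng theorem stated above. That theorem already gives $N_s(G)\le\max\{f_s(n,c,k),f_s(n,c,\lfloor c/2\rfloor)\}$ with $k=\delta$. The first term matches. The task is therefore to improve $f_s(n,c,\lfloor c/2\rfloor)$ to $g_s(n,c,\delta)$ whenever $\delta<\lfloor c/2\rfloor$, using the fact that some vertex has degree exactly $\delta$.

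Let $v$ be a vertex of degree $\delta$. Consider $G-v$. If $G-v$ is still $2$-connected with circumference $c$, apply a stability form of the Kopylov/Ning--Peng bound to $G-v$ on $n-1$ vertices, which gives $N_s(G-v)\le\max\{f_s(n-1,c,\delta'),f_s(n-1,c,\lfloor c/2\rfloor)\}$. Then add the at most $\binom{\delta}{s-1}$ cliques that contain $v$. Since $f_s(n-1,c,\lfloor c/2\rfloor)+\binom{\delta}{s-1}=g_s$, this gives the bound.

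If removing $v$ destroys $2$-connectivity or lowers the circumference, I would instead follow Kopylov's closure argument. Take the $(\lfloor c/2\rfloor+1)$-disintegration, i.e. repeatedly delete vertices of low degree, and let $H$ be what remains.
\begin{itemize}
\item If $H$ is empty, the usual degree-counting estimate gives at most $f_s(n,c,\delta)$.
\item If $H$ is nonempty, Kopylov's structure forces $H$ to be a clique on about $c+1-\lfloor c/2\rfloor$ vertices. Every remaining vertex then has at most $\lfloor c/2\rfloor$ neighbours in the "bottleneck" set. Since $v$ has only $\delta$ neighbours, its contribution drops to $\binom{\delta}{s-1}$, which again gives $g_s$.
\end{itemize}

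\emph{Main obstacle.} The hardest step is the intermediate regime, where the degree-$\delta$ vertex may lie inside the dense part or where $\delta'$ in $G-v$ is smaller than $\delta$. Here one needs careful convexity comparisons among $f_s(n,c,x)$ for $\delta\le x\le\lfloor c/2\rfloor$. The first thing I would try is to show that $f_s(n,c,x)$ is convex in $x$, so that the maximum over this range is attained at an endpoint, and then absorb the boundary cases into the two extremal families above.
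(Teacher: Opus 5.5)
Your two constructions are correct, with one slip. A cycle through all of $K$ can pick up at most $\delta-1$ vertices of $I$, not $\delta$, because $K\setminus A$ occupies one of the $\delta$ gaps between consecutive $A$-vertices. This is also where $c\ge 2\delta$ (Dirac's theorem) is needed.

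The upper bound has a genuine gap. Passing to $G-v$ discards exactly the information you need. As soon as $v$ has a neighbour of degree $\delta$, you get $\delta(G-v)=\delta-1$. Ning--Peng applied to $G-v$ then gives $f_s(n-1,c,\delta-1)+\binom{\delta}{s-1}$ as one branch, and this can exceed $\max\{f_s(n,c,\delta),g_s(n,c,\delta)\}$. For example, take $s=2$, $\delta=3$, $c=12$, $n=14$. Your bound is $\max\{f_2(13,12,2),f_2(13,12,6)\}+3=59+3=62$, while $f_2(14,12,3)=57$ and $g_2(14,12,3)=60$. Convexity of $f_s(n,c,x)$ on $\delta\le x\le\lfloor c/2\rfloor$ cannot repair this. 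The offending term sits at $x=\delta-1$, and the loss is informational: only the neighbours of $v$ drop to degree $\delta-1$. Also, Ning--Peng needs $G-v$ to be nonhamiltonian, which fails when $n=c+1$.

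Your Kopylov branch is also misdescribed, in two places.
\begin{itemize}
\item If the $(\lfloor c/2\rfloor+1)$-core is empty, degree counting gives $g_s(n,c,\delta)$, not $f_s(n,c,\delta)$. Your own $g_s$-graph with $c$ even has empty core and, for $\delta<\lfloor c/2\rfloor$ and $n$ large, more than $f_s(n,c,\delta)$ cliques.
\item If the core is nonempty, it is a clique $K_r$ with $x=c+1-r$ anywhere in $[\delta,\lfloor c/2\rfloor]$, not one of size about $c+1-\lfloor c/2\rfloor$. Your own $f_s$-graph has core $K_{c+1-\delta}$ when $\delta<\lfloor c/2\rfloor$.
\end{itemize}

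The missing device is a \emph{protected} closure, and it also makes the case split on $G-v$ unnecessary. Fix $w$ with $d(w)=\delta$. Add edges $uy$ with $d(u)+d(y)\ge c+1$ only for $u,y\ne w$; by Bondy's lemma this keeps the circumference at most $c$ in a $2$-connected graph. Then $w$ keeps degree $\delta\le\lfloor c/2\rfloor$, can be deleted first in the disintegration, and contributes exactly $\binom{\delta}{s-1}$. The rest goes as follows.
\begin{itemize}
\item The core is a clique, since two core vertices have degree sum at least $2\lfloor c/2\rfloor+2\ge c+1$.
\item $x\ge\delta$; otherwise the closure forces $V(Q)=V(D)\cup\{w\}$ and $Q$ would be Hamiltonian.
\item The $(x+1)$-core coincides with the $(\lfloor c/2\rfloor+1)$-core.
\item Hence $N_s(G)\le\binom{\delta}{s-1}+(n-r-1)\binom{x}{s-1}+\binom{r}{s}$. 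This is convex in $x$, equals $f_s(n,c,\delta)$ at $x=\delta$, and equals $g_s(n,c,\delta)$ at $x=\lfloor c/2\rfloor$.
\end{itemize}
The paper only quotes Zhang's result and does not prove it. However, its proofs of Theorems~\ref{thm1.1}, \ref{thm2.1} and \ref{thm3.1} run exactly this scheme for the path-cover analogue: closure avoiding $w$, Claims~\ref{cla1.1}--\ref{cla1.2}, Subcases 2.1--2.2, and Lemma~\ref{lem1}.
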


In addition, Zhang \cite{a28} studied the maximum number of $s$-cliques in a nonhamilton-connected graph of order $n$ with minimum degree $\delta$. Let $S$ be a subset of $V(G)$. A graph $G$ is \emph{$S$-leaf-connected} if $G$ has a spanning tree $T$ such that $S$ is the set of leaves of $T$. A graph $G$ of order at least $k+1$ is \emph{$k$-leaf-connected} if for any subset $S$ of $V(G)$ with $|S|=k$, $G$ is $S$-leaf-connected. Wu, Xue and Kang \cite{a29} provided a clique condition for a graph being $k$-leaf-connected.

\begin{thm}\emph{\rm{(Wu, Xue and Kang \cite{a29})}}
Let $s,k,\delta$ be integers with $s\geq 2$, $k\geq 2$ and $k+1\leq \delta \leq \frac{n+k-2}{2}$. Suppose that $G$ is a graph of order $n$ with minimum degree at least $\delta$ such that
$$
N_s(G)>\max \left\{h_s(n,k,\delta+1),h_s\left(n,k,\left\lfloor \frac{n+k-2}{2}\right\rfloor \right) \right\},
$$
where $h_s(n,k,\delta)=\binom{n+k-\delta-1}{s}+(\delta+1-k)\binom{\delta}{s-1}$.
\begin{enumerate}
	\item[(i)] If $k=2$, then $G$ is $k$-leaf-connected unless $G$ is a subgraph of $K_\delta\vee (K_{n-2\delta+1}\cup \overline{K}_{\delta-1})$ or $K_2\vee (K_{n-\delta-1}\cup K_{\delta-1})$.
	\item[(ii)] If $k\geq 3$, then $G$ is $k$-leaf-connected unless $G$ is a subgraph of $K_k\vee (K_{n-\delta-1}\cup K_{\delta+1-k})$.
\end{enumerate}
\end{thm}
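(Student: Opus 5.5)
We prove the statement by the closure-and-stability method that is standard for clique versions of Erd\H{o}s-type results (Bondy--Chv\'atal closure, then a degree-sequence analysis). Suppose $G$ is not $k$-leaf-connected and satisfies the hypotheses.

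\textbf{Step 1 (closure).} We first recall (or reprove) an Ore-type closure lemma for $k$-leaf-connectivity. If $u,v$ are nonadjacent with $d(u)+d(v)\ge n+k-1$, then $G$ is $k$-leaf-connected if and only if $G+uv$ is. Iterating this lemma, we replace $G$ by its $(n+k-1)$-closure $H=\mathrm{cl}_{n+k-1}(G)$. The graph $H$ is still not $k$-leaf-connected, has minimum degree at least $\delta$, and satisfies $N_s(H)\ge N_s(G)$. Moreover, any two nonadjacent vertices of $H$ have degree sum at most $n+k-2$.

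\textbf{Step 2 (degree sequence).} Since $H$ is not complete, we can use a Chv\'atal-type degree condition for $k$-leaf-connectivity, due to Gurgel and Wakabayashi or Egawa et al. It yields an integer $m$ with $\delta\le m\le \frac{n+k-2}{2}$ such that $H$ has at least $m+1-k$ vertices of degree at most $m$; take $m$ minimal with this property. Let $S$ be a set of $m+1-k$ such low-degree vertices. By the closure property, the vertices of $V\setminus S$ with large degree form a clique. Counting $s$-cliques according to how many vertices they take from $S$ gives
$$N_s(H)\le \binom{n-(m+1-k)}{s}+(m+1-k)\binom{m}{s-1}=h_s(n,k,m).$$
The second term uses the fact that each vertex of $S$ lies in at most $\binom{m}{s-1}$ $s$-cliques, and the first bounds the cliques inside $V\setminus S$.

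\textbf{Step 3 (optimisation).} The function $x\mapsto h_s(n,k,x)$ is convex on the relevant interval, so its maximum is attained at an endpoint. The hypothesis then excludes every value of $m$ except $m=\delta$, where the bound may still exceed $h_s(n,k,\delta+1)$. This is why the threshold is stated with $\delta+1$: the case $m=\delta$ must be treated structurally. In that case $S$ consists of exactly $\delta+1-k$ vertices of degree $\delta$. The clique count forces $V\setminus S$ to be nearly complete, and forces each vertex of $S$ to have its neighbourhood inside a common $\delta$-set. We then identify which graphs of this form are not $k$-leaf-connected.

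\textbf{Main obstacle: extracting the extremal families in Step 3.}
\begin{itemize}
\item For $k\ge 3$, a short argument should show that all vertices of $S$ share the same $k$ attachment vertices. Otherwise one can build a spanning tree with any prescribed $k$-set of leaves. This gives $G\subseteq K_k\vee (K_{n-\delta-1}\cup K_{\delta+1-k})$.
\item For $k=2$, non-$2$-leaf-connectivity is equivalent to non-Hamilton-connectivity. Two configurations survive, $K_\delta\vee (K_{n-2\delta+1}\cup \overline{K}_{\delta-1})$ and $K_2\vee (K_{n-\delta-1}\cup K_{\delta-1})$. Separating them requires a finer case analysis according to whether $S$ is independent or whether the attachment set has size $2$.
\end{itemize}
Checking that the near-extremal graphs in each case are exactly the listed ones, rather than merely close to them, is where the delicate counting lies.
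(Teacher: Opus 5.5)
The paper does not prove this statement; it only quotes it from Wu, Xue and Kang. So I am judging your proposal on its own terms.

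Up to the reduction to $m=\delta$ your argument is sound. The closure, the bound $N_s(H)\le h_s(n,k,m)$ coming from $m+1-k$ vertices of degree at most $m$, and convexity of $x\mapsto h_s(n,k,x)$ together rule out $\delta+1\le m\le\lfloor\frac{n+k-2}{2}\rfloor$. But this only recovers the non-stability bound $h_s(n,k,\delta)$. What is specific to this statement — the threshold at $\delta+1$ and the list of exceptional graphs — lives entirely in the case $m=\delta$, and there your proposal has a genuine gap.

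First, counting cannot do the job you assign to it. For large $n$ the window between $h_s(n,k,\delta+1)$ and $h_s(n,k,\delta)$ has width of order $\binom{n+k-\delta-2}{s-1}$. A graph in which $V\setminus S$ is a clique minus one edge is compatible with the clique count, so $N_s$ alone does not even make $V\setminus S$ a clique. Second, the structure you say is forced is false. Take $K_k\vee(K_{n-\delta-1}\cup K_{\delta+1-k})$, or $K_2\vee(K_{n-\delta-1}\cup K_{\delta-1})$ when $k=2$; both satisfy all hypotheses for large $n$. There the degree-$\delta$ vertices form the clique $K_{\delta+1-k}$, and their neighbourhoods together cover $\delta+1$ vertices, so they do not lie in a common $\delta$-set. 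A case analysis that starts from your picture would miss exactly the graphs it is meant to find.

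The missing idea is to use the degree-sum property of the closure in the tight case. One way is Kopylov's disintegration, the tool the paper uses for its own theorems:
\begin{enumerate}
\item[(a)] Put $r=\lfloor\frac{n+k-2}{2}\rfloor$. If the core $D=D(H;r)$ is empty, then $N_s(H)\le h_s(n,k,r)$.
\item[(b)] Otherwise $D$ is a clique of order at least $r+2$. By maximality of the core, every $x\notin D$ has a non-neighbour in $D$, so $d(x)\le q:=n+k-1-|D|$.
\item[(c)] This gives $N_s(H)\le h_s(n,k,q)$ with $\delta\le q\le r$, and convexity forces $q=\delta$.
\item[(d)] Every $x\in X:=V\setminus D$ then has $d(x)=\delta$. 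For nonadjacent $x\in X$, $y\in D$ you get $d(y)=|D|-1$. Hence $D$ splits into a part $D_1$ complete to $X$ and a nonempty part anticomplete to $X$.
\item[(e)] $H[X]$ is $(\delta-a)$-regular with $a=|D_1|\in[k,\delta]$, that is, $H=K_a\vee(K_{n+k-1-\delta-a}\cup R)$ with $R$ regular on $\delta+1-k$ vertices.
\end{enumerate}
The cases $a=k$ and $a=\delta$ are exactly the listed graphs. The real content of the theorem is to show that $H$ is $k$-leaf-connected in every other case. That means every $k<a<\delta$, with $R$ an arbitrary and possibly disconnected $(\delta-a)$-regular graph, and also $a=\delta$ when $k\ge 3$. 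You must exhibit a spanning tree for every prescribed $k$-set of leaves. Your proposal only asserts this (``a short argument should show'', ``otherwise one can build a spanning tree''), so the stability part of the statement remains unproved.
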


Spectral conditions are well-known to be a classical approach to Hamiltonian problems in graph theory. In 2010, Fiedler and Nikiforov \cite{a30} derived some sufficient conditions on the spectral radius for the existence of Hamilton cycles. In 2017, Zhou and Wang \cite{a31} provided some sufficient conditions for a graph to be Hamilton-connected in terms of the spectral radius and the signless Laplacian spectral radius of the graph. Ao et al. \cite{a32} extended the results of Zhou and Wang to $k$-leaf-connectivity. Recently, Wu, Xue and Kang \cite{a29} derived some sufficient conditions for graphs to be $k$-leaf-connected in terms of the $\alpha$-spectral radius of the graph.

Motivated by the above results, we naturally consider the clique conditions $N_s(G)$ and the $\alpha$-spectral conditions $\lambda_\alpha(G)$ for a graph $G$ to be $t$-disjoint path coverable without satisfying Dirac-type degree conditions.

The rest of this paper is organized as follows. In Section \ref{sec2}, we introduce some results on the clique condition $N_s(G)$ and the $\alpha$-spectral radius condition $\lambda_\alpha(G)$ for a graph $G$ to be one-to-one, one-to-many or many-to-many $t$-disjoint path coverable and provide some graphs to illustrate the sharpness. In Section \ref{sec3}, we summarize known results that are used in subsequent arguments. In Section \ref{sec4}, the proofs of our main results are given. In Section \ref{sec5}, we conclude this paper and pose a problem.

\section{Main results}\label{sec2}

In this section, we establish sufficient conditions for a graph $G$ to be one-to-one, one-to-many or many-to-many $t$-disjoint path coverable in terms of the number of $s$-cliques and the $\alpha$-spectral radius. We first define some notations as follows.
\begin{nota}
	For $i\in\{0,1,2\}$, fix $3\le \delta\le \left\lfloor \frac{n+t-i-1}{2}\right\rfloor.$ Let $F_i(n,t,\delta)=K_{\delta}\vee(K_{n+t-2\delta-i}\cup\overline{K}_{\delta-t+i})$. Denote by $f_i(n,t,\delta,s)$ the number of $s$-cliques in $F_i(n,t,\delta)$, then
	$$f_i(n,t,\delta,s)=\binom{n+t-\delta-i}{s}+(\delta-t+i)\binom{\delta}{s-1}.$$
\end{nota}

\begin{nota}
	For $i\in\{0,1,2\}$, let $h=\left\lfloor \frac{n+t-i-1}{2}\right\rfloor$, and $G_i(n,t,\delta)$ be the graph obtained from $K_h\vee(K_{n+t-2h-i}\cup\overline{K}_{h-t+i})$  by deleting $h-\delta$ edges that are incident to one common vertex in $\overline{K}_{h-t+i}$. Denote by $g_i(n,t,\delta,s)$ the number of $s$-cliques in $G_i(n,t,\delta)$, then
	$$g_i(n,t,\delta,s)=\binom{n+t-h-i}{s}+(h-t+i-1)\binom{h}{s-1}+\binom{\delta}{s-1}.$$
\end{nota}

In this paper, we first obtain the following theorem.

\begin{thm}\label{thm1.1}
Let $s$, $t$, $\delta$ be integers with $s\geq 2$ and $2\leq t\leq \delta\leq \left\lfloor \frac{n+t-3}{2} \right\rfloor $. Suppose that $G$ is a graph of order $n$ with minimum degree $\delta$. If
$$
N_s(G)> \max\{f_2(n,t,\delta,s), g_2(n,t,\delta,s)\},
$$
then $G$ is one-to-one $t$-disjoint path coverable.
\end{thm}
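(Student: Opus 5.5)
We argue by contradiction through a closure-type reduction. The plan rests on an Ore-type result for one-to-one $t$-disjoint path covers (Lin et al. \cite{a18}, the Ore-type companion of Theorem~\ref{thm0}(i)): if $d(u)+d(v)\ge n+t-2$ for every nonadjacent pair $u,v$, then $G$ is one-to-one $t$-disjoint path coverable. We also use its closure consequence: if $d(u)+d(v)\ge n+t-2$ with $uv\notin E(G)$, then $G$ is one-to-one $t$-disjoint path coverable if and only if $G+uv$ is. So suppose $G$ is not coverable, and let $H$ be the $(n+t-2)$-closure of $G$, obtained by repeatedly joining nonadjacent pairs whose degree sum is at least $n+t-2$. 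Then $H$ is not coverable, $\delta(H)\ge\delta$, and $N_s(H)\ge N_s(G)$. In particular $H$ is not complete, and every nonadjacent pair in $H$ has degree sum at most $n+t-3$.

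Next we run the standard Erd\H{o}s/F\"uredi--Kostochka--Luo degree-sequence argument on $H$. Order the vertices by degree $d_1\le\cdots\le d_n$. Non-coverability together with the closure property should force some $k$ with $\delta\le k\le \lfloor (n+t-3)/2\rfloor$ such that
\[
d_k\le k,\qquad d_{n+t-2-k}\le n+t-3-k,
\]
or a similar Chv\'atal-type conclusion. Concretely, take a nonadjacent pair $u,v$ maximizing $d(u)+d(v)$ with $d(u)\le d(v)$, and set $k=d(u)$. Every vertex nonadjacent to $v$ has degree at most $k$, which gives at least $n-1-d(v)\ge k-t+2$ vertices of degree at most $k$. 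The clique count is then bounded by splitting the $s$-cliques into two kinds. Those meeting the set $S$ of low-degree vertices number at most $|S|\binom{k}{s-1}$. Those inside the remaining $n+t-2-k$ vertices number at most $\binom{n+t-2-k}{s}$. Hence
\[
N_s(G)\le N_s(H)\le \binom{n+t-k-2}{s}+(k-t+2)\binom{k}{s-1}=f_2(n,t,k,s).
\]

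The last step is convexity. As a function of $k\in[\delta,h]$ with $h=\lfloor (n+t-3)/2\rfloor$, $f_2(n,t,k,s)$ is convex, so its maximum is at an endpoint. At $k=\delta$ the bound is $f_2(n,t,\delta,s)$, which contradicts the hypothesis. At the other end we refine: when $k$ is close to $h$, one low-degree vertex still has degree only $\delta$, so its contribution is $\binom{\delta}{s-1}$ rather than $\binom{h}{s-1}$. This gives the bound $g_2(n,t,\delta,s)$. In either case $N_s(G)\le\max\{f_2,g_2\}$, a contradiction.

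The main obstacle is step two. We must show that non-coverability with closedness produces that degree-sequence structure, for instance via a Chv\'atal-type sufficient condition for one-to-one $t$-disjoint path covers, which we would cite from \cite{a23} (P\'osa-type). We also need the careful accounting that yields $g_2$ rather than the cruder $f_2(n,t,h,s)$. The accounting relies on the minimum degree vertex being among the low-degree set, and on the cases $k>\delta$ and $k=\delta$ being handled separately.
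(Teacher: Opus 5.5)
You have a genuine gap in the last step, which is exactly where $g_2$ comes from. Your structural step is fine, and it needs no Chv\'atal-type citation from \cite{a23}. Once the full closure $H$ is not complete, take the nonadjacent pair $u,v$ of maximum degree sum. This gives $k=d_H(u)\in[\delta,h]$ with $h=\lfloor (n+t-3)/2\rfloor$, and a set $S$ of $k-t+2$ non-neighbours of $v$, each of $H$-degree at most $k$. Hence $N_s(H)\le f_2(n,t,k,s)$.

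The problem is what comes next. Convexity of $f_2(n,t,\cdot,s)$ only gives $\max\{f_2(n,t,\delta,s),f_2(n,t,h,s)\}$. But $f_2(n,t,h,s)-g_2(n,t,\delta,s)=\binom{h}{s-1}-\binom{\delta}{s-1}$, which is positive as soon as $s-1\le h$ and $\delta<h$. So you cannot maximize first and ``refine at the endpoint''; the refined estimate must hold for every $k$ before convexity is applied.

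More seriously, the refinement you describe is not available in $H$. The full $(n+t-2)$-closure may add edges at the minimum-degree vertex $w$ of $G$: any non-neighbour $x$ of $w$ with $d_G(x)\ge n+t-2-\delta$ gets joined to it. So $H$ may have no vertex of degree $\delta$ at all, and nothing forces $w$ into $S$. This is precisely why the paper closes only inside $V(G)\setminus\{w\}$. That preserves $d_Q(w)=\delta$, makes $w$ the first vertex removed in Kopylov's $h$-disintegration, and yields the bound $\binom{\delta}{s-1}+\lambda_2(n,t,s,x)$. Convexity of $\lambda_2$ then gives $f_2$ and $g_2$ at the two ends.

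Your framework can be repaired without the disintegration: use $H$ only for the structure, and count the cliques of $G$. Put $\phi(k)=\binom{\delta}{s-1}+(k-t+1)\binom{k}{s-1}+\binom{n+t-2-k}{s}$.
\begin{itemize}
\item If $w\in S$, then $N_s(G)\le\phi(k)$ directly, since $d_G(x)\le d_H(x)\le k$ for $x\in S$.
\item If $w\notin S$, separate the cliques meeting $S$, those through $w$ avoiding $S$, and the rest. This gives $N_s(G)\le (k-t+2)\binom{k}{s-1}+\binom{\delta}{s-1}+\binom{n+t-3-k}{s}$. This bound differs from $\phi(k)$ by $\binom{k}{s-1}-\binom{n+t-3-k}{s-1}\le 0$, because $2k\le d_H(u)+d_H(v)\le n+t-3$.
\end{itemize}
Since $\phi$ is convex on $[\delta,h]$ with $\phi(\delta)=f_2(n,t,\delta,s)$ and $\phi(h)=g_2(n,t,\delta,s)$, this finishes the contradiction. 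As written, however, your proposal does not supply this step.
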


Let $s=2$ in Theorem \ref{thm1.1}, we have the following corollary immediately.

\begin{cor}\label{cor1.2}
	Let $t$, $\delta$ be integers with $2\leq t\leq \delta\leq \left\lfloor \frac{n+t-3}{2} \right\rfloor $. Suppose that $G$ is a graph of order $n$ with minimum degree $\delta$. If
	$$
	e(G)> \max\{f_2(n,t,\delta,2), g_2(n,t,\delta,2)\},
	$$
	then $G$ is one-to-one $t$-disjoint path coverable.
\end{cor}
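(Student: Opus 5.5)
Our plan follows the standard closure-plus-stability route used by Füredi--Kostochka--Luo, Zhang, and Wu--Xue--Kang. The first ingredient is an Ore-type closure for one-to-one $t$-disjoint path coverability. We expect it to be among the known results summarized in Section~\ref{sec3}, in line with the Ore-type conditions of \cite{a18,a20}. The intended form is: if $u,v$ are nonadjacent with $d_G(u)+d_G(v)\ge n+t-2$, then $G$ is one-to-one $t$-disjoint path coverable if and only if $G+uv$ is. This yields a well-defined $(n+t-2)$-closure $\mathrm{cl}(G)$, and $G$ is coverable iff $\mathrm{cl}(G)$ is.

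Suppose for contradiction that $G$ is not coverable. Then $H=\mathrm{cl}(G)$ is not coverable, hence not complete. Moreover, $\delta(H)\ge\delta$, $N_s(H)\ge N_s(G)$, and any two nonadjacent vertices of $H$ have degree sum at most $n+t-3$.

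The second step is a Bondy/Erdős-type degree-sequence argument on $H$. Let $k$ be the smallest degree of a vertex that has a non-neighbour. Take a vertex $u$ of degree $k$ and a vertex $v\notin N_H(u)$. Then $d_H(v)\le n+t-3-k$, and every non-neighbour of $u$ has degree at most $n+t-3-k$, so $k\le \lfloor (n+t-3)/2\rfloor=h$. We also want a Chvátal-type structural statement. The set $A$ of vertices of degree $\ge n+t-2-k$ induces a clique that is complete to its degree-$\ge n+t-2-k$ part. There are at least $k-t+2$ vertices of degree at most $k$, since otherwise the Chvátal-type condition for coverability (from \cite{a18}) would apply.

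The count then follows the Füredi--Kostochka--Luo bookkeeping. Each $s$-clique is charged either to a vertex of low degree, where such a vertex $w$ contributes at most $\binom{k}{s-1}$ cliques containing it, or to the clique on the remaining $n+t-k-2$ vertices. This gives
$$N_s(H)\le \binom{n+t-k-2}{s}+(k-t+2)\binom{k}{s-1}=f_2(n,t,k,s).$$

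We must now compare with the threshold. The function $k\mapsto f_2(n,t,k,s)$ is convex in $k$, so on $[\delta,h]$ its maximum is attained at an endpoint, namely $\delta$ or $h$. A bare bound of $f_2(n,t,h,s)$ is too weak, since $g_2$ is smaller than that. This requires a refinement: when $k>\delta$, we use that $G$ (not $H$) has a vertex of degree exactly $\delta$. That vertex contributes only $\binom{\delta}{s-1}$ cliques rather than $\binom{h}{s-1}$, which recovers $g_2(n,t,\delta,s)$. Hence $N_s(G)\le\max\{f_2,g_2\}$, a contradiction.

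We expect the main obstacle to be this refinement. One needs $\delta(G)=\delta$ exactly (not merely $\ge\delta$) to force the extra low-degree vertex. The argument must also handle the case where that degree-$\delta$ vertex in $G$ has gained edges in the closure. To deal with this, we first observe that a vertex of degree $\delta\le h$ in $G$ can only gain neighbours $x$ with $d(x)\ge n+t-2-\delta$. We then do the count with $\delta$ as the minimum-degree parameter directly, splitting into the cases $k=\delta$ and $k>\delta$, and use the convexity/endpoint comparison in each case.
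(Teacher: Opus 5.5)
\textbf{Gap: the low-degree vertex count.} The load-bearing step of your count is the claim that $H$ has at least $k-t+2$ vertices of degree at most $k$, where $k$ is the smallest degree of a vertex with a non-neighbour. This is not justified, and for that choice of $k$ it is false. The paper uses \cite{a18} only for the Dirac bound (Theorem \ref{thm0}(i)) and the Ore-type closure (Lemma \ref{lem1.1}), so no Chv\'{a}tal-type criterion is available. Even if one were, its failure would only give \emph{some} index $i$ with at least $i-t+2$ vertices of degree at most $i$, not your particular $k$. For a concrete counterexample, take $t=2$ and let $G$ be the Petersen graph with one edge subdivided, so $n=11$ and $\delta=2$, within the range $t\le\delta\le\lfloor (n+t-3)/2\rfloor$. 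It is non-Hamiltonian. Every non-adjacent pair has degree sum at most $6<n+t-2$, so $H=G$. Your $k$ is $2$, yet only one vertex has degree at most $2$, whereas $k-t+2=2$. So $N_s(H)\le f_2(n,t,k,s)$ does not follow from what you wrote.

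\textbf{Repair.} The fix needs nothing beyond the closure. Pick non-adjacent $u,v$ in $H$ with $d_H(u)+d_H(v)$ maximum and $d_H(u)\le d_H(v)$, and set $k=d_H(u)$. Then $\delta\le k\le h=\lfloor (n+t-3)/2\rfloor$. By maximality, each of the $n-1-d_H(v)\ge k-t+2$ non-neighbours of $v$ has $H$-degree at most $k$.

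Your refinement for $g_2$ is also only sketched, and the split $k=\delta$ versus $k>\delta$ is not the relevant one. You do not need to track the edges $w$ gains in the closure. Instead, count the cliques of $G$ through $w$ directly: $N_s(G)\le \binom{\delta}{s-1}+N_s(H-w)$. Then take the $k-t+2$ low-degree vertices to include $w$ whenever $d_H(w)\le k$. In both cases you get $N_s(G)\le \binom{\delta}{s-1}+\lambda_2(n,t,s,k)$. When $d_H(w)>k$ this uses $\binom{k}{s-1}\le\binom{n+t-k-3}{s-1}$, which holds since $2k\le n+t-3$. Convexity in $k\in[\delta,h]$ then gives $f_2(n,t,\delta,s)$ at $k=\delta$ and $g_2(n,t,\delta,s)$ at $k=h$.

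\textbf{Comparison with the paper.} With these repairs your argument is a genuine Bondy--Chv\'{a}tal-style alternative. The paper instead closes only inside $G-w$, so $d_Q(w)=\delta$ is kept automatically, and it bounds $N_s(Q)$ via Kopylov's $h$-disintegration and a second core. For this corollary the paper simply sets $s=2$ in Theorem \ref{thm1.1}.
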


Let $t=2$ in Corollary \ref{cor1.2}, we can easily obtain the following corollary.

\begin{cor}\emph{\rm{(Zhang \cite{a27})}}
	Let $\delta$ be integers with $2\leq \delta\leq \left\lfloor \frac{n-1}{2} \right\rfloor $. Suppose that $G$ is a graph of order $n$ with minimum degree $\delta$. If
	$$
	e(G)> \max\{f_2(n,2,\delta,2), g_2(n,2,\delta,2)\},
	$$
	then $G$ is Hamiltonian.
\end{cor}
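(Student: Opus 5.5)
The plan is to derive this corollary directly from Corollary \ref{cor1.2}, which is itself the case $s=2$ of Theorem \ref{thm1.1}. There are three steps: specialize Corollary \ref{cor1.2} to $t=2$, check the hypotheses, and translate the conclusion into Hamiltonicity.

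\emph{Checking the hypotheses.} We set $t=2$ in Corollary \ref{cor1.2}. Its range condition $2\le t\le \delta\le \lfloor (n+t-3)/2\rfloor$ then reads
$$2\le \delta\le \left\lfloor \tfrac{n-1}{2}\right\rfloor,$$
which is exactly the hypothesis here. The edge condition $e(G)>\max\{f_2(n,2,\delta,2),g_2(n,2,\delta,2)\}$ is also literally the assumed bound, so nothing further needs checking on this side. For orientation, the thresholds are explicit:
$$f_2(n,2,\delta,2)=\binom{n-\delta}{2}+\delta^2$$
is Erd\H{o}s' bound for $F_2(n,2,\delta)=K_\delta\vee(K_{n-2\delta}\cup\overline{K}_\delta)$. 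The other threshold $g_2(n,2,\delta,2)$ uses $h=\lfloor (n-1)/2\rfloor$. Corollary \ref{cor1.2} therefore yields that $G$ is one-to-one $2$-disjoint path coverable.

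\emph{Passing to Hamiltonicity.} We use the fact recalled in the introduction that a graph with at least $3$ vertices is one-to-one $2$-disjoint path coverable if and only if it is Hamiltonian. We need $n\ge 3$; this holds because $2\le \lfloor (n-1)/2\rfloor$ forces $n\ge 5$. For completeness, the direction we use is short. Pick any edge $uv$ of $G$, which exists since $\delta\ge 2$. Take a one-to-one $2$-disjoint path cover between $u$ and $v$. The two internally disjoint $u$--$v$ paths together cover $V(G)$. At most one of them can be the single edge $uv$, so their union is a spanning cycle, unless one path is the edge $uv$ and the other is a Hamilton $u$--$v$ path; in that case, adding the edge $uv$ closes it into a Hamilton cycle.

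\emph{Possible obstacle.} The only delicate point is one of definitions: whether ``one-to-one $2$-disjoint path coverable'' allows one of the two paths to be the direct edge $uv$, or requires every vertex to be covered exactly once apart from the shared ends $u,v$. Either convention gives a Hamilton cycle by the argument above, so the corollary follows.
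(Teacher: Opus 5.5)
Your proposal is correct and follows the paper's route. The paper likewise specializes Corollary \ref{cor1.2} (the $s=2$ case of Theorem \ref{thm1.1}) to $t=2$, where the range becomes $2\le\delta\le\lfloor (n-1)/2\rfloor$, and then uses the equivalence, for graphs on at least $3$ vertices, between one-to-one $2$-disjoint path coverability and Hamiltonicity; the paper only cites that equivalence from its introduction, whereas you also verify the needed direction.
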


Let $$f_\alpha(n,\delta,m)=\frac{\delta+\alpha n-\alpha \delta -1+\sqrt{(\delta+\alpha n-\alpha \delta -1)^2+4(1-\alpha)(2m-n\delta+\delta)}}{2}.$$

Corollary \ref{cor1.2} immediately yields the following corollary.

\begin{cor}\label{cor1.3}
	Let $n\geq 3$, $2\leq t\leq \delta\leq \left\lfloor \frac{n+t-3}{2} \right\rfloor $ and  $h_2=\max\{f_2(n,t,\delta,2), g_2(n,t,\delta,2)\}$. Suppose that $G$ is a graph of order $n$ with minimum degree $\delta$. If
	$$
	\lambda_\alpha(G)> f_\alpha(n,\delta,h_2),
	$$
	then $G$ is one-to-one $t$-disjoint path coverable.
\end{cor}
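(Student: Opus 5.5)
The plan is to argue by contraposition, combining Corollary \ref{cor1.2} with an upper bound on $\lambda_\alpha(G)$ in terms of $n$, $\delta$ and $m=e(G)$. Suppose $G$ has order $n$ and minimum degree $\delta$ with $2\le t\le \delta\le \lfloor (n+t-3)/2\rfloor$, and suppose $G$ is \emph{not} one-to-one $t$-disjoint path coverable. By Corollary \ref{cor1.2}, $m=e(G)\le h_2$. It then suffices to prove
$$\lambda_\alpha(G)\le f_\alpha(n,\delta,e(G))\le f_\alpha(n,\delta,h_2),$$
which contradicts the hypothesis $\lambda_\alpha(G)>f_\alpha(n,\delta,h_2)$.

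The second inequality is a monotonicity statement. Inside the square root, $m$ appears only through the term $4(1-\alpha)(2m-n\delta+\delta)$, and its coefficient $8(1-\alpha)$ is nonnegative because $\alpha\in[0,1)$. The radicand is nonnegative for every graph with minimum degree $\delta$, since $2m\ge n\delta$ gives $2m-n\delta+\delta\ge\delta>0$. Hence $f_\alpha(n,\delta,m)$ is nondecreasing in $m$. The radicand stays nonnegative when $m$ is raised to $h_2$, so $f_\alpha(n,\delta,h_2)$ is well defined and at least $f_\alpha(n,\delta,e(G))$.

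The first inequality is the known Hong-type bound
$$\lambda_\alpha(G)\le \frac{\delta-1+\alpha(n-\delta)+\sqrt{(\delta-1+\alpha(n-\delta))^2+4(1-\alpha)(2m-n\delta+\delta)}}{2},$$
which is exactly $f_\alpha(n,\delta,m)$. I expect it to be among the results collected in Section \ref{sec3}; it is the bound used by Wu, Xue and Kang \cite{a29}, in the line of Hong, Shu and Fang and of Nikiforov. If I had to prove it myself, I would do the following.
\begin{itemize}
\item Take the positive Perron eigenvector $x$ of $A_\alpha(G)$ for $\lambda=\lambda_\alpha(G)$ (on each component if needed), and normalise so that $\max_v x_v=x_u=1$.
\item Compute the $u$-entry of $A_\alpha^2 x=\lambda^2 x$. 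Bound $\sum_{v\sim u}\sum_{w\sim v}x_w$ by separating the contributions of $u$ and of its non-neighbours.
\item Use $\sum_v d(v)=2m$ together with $d(v)\ge\delta$ for the vertices not adjacent to $u$.
\end{itemize}
This should yield the quadratic inequality $\lambda^2-(\delta-1+\alpha(n-\delta))\lambda-(1-\alpha)(2m-n\delta+\delta)\le 0$, whose larger root is $f_\alpha(n,\delta,m)$.

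The main obstacle is this spectral lemma, specifically tracking the $\alpha$-dependent diagonal terms so that the linear coefficient comes out as exactly $\delta-1+\alpha(n-\delta)$. Once it is available, the corollary follows from Corollary \ref{cor1.2} and the monotonicity in $m$ in a few lines.
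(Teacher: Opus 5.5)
Your proposal is correct and follows the same route as the paper. The paper also shows that $e(G)\le h_2$ would force $\lambda_\alpha(G)\le f_\alpha(n,\delta,e(G))\le f_\alpha(n,\delta,h_2)$ via the Wu--Xue--Kang bound, which it simply cites as Lemma~\ref{lem0}, and then invokes Corollary~\ref{cor1.2}; your explicit check that $f_\alpha(n,\delta,m)$ is nondecreasing in $m$ supplies a step the paper leaves implicit, and your sketch of the spectral bound is not needed.
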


We then have the following theorem.

\begin{thm}\label{thm2.1}
	Let $s$, $t$, $\delta$ be integers with $s\geq 2$ and $3\leq t+1\leq \delta\leq \left\lfloor \frac{n+t-2}{2} \right\rfloor $. Suppose that $G$ is a graph of order $n$ with minimum degree $\delta$. If
	$$
	N_s(G)> \max\{f_1(n,t,\delta,s), g_1(n,t,\delta,s)\},
	$$
	then $G$ is one-to-many $t$-disjoint path coverable.
\end{thm}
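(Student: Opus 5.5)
We argue by contradiction using a closure argument for one-to-many path covers, followed by a degree-sequence (Bondy–Chvátal / Erdős-type) extremal analysis in the style of Füredi–Kostochka–Luo and Zhang.

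\textbf{Closure step.} Suppose $G$ satisfies the hypotheses of Theorem \ref{thm2.1} but is not one-to-many $t$-disjoint path coverable. The key tool is an Ore-type closure lemma. We expect that if $u,v$ are nonadjacent with $d(u)+d(v)\ge n+t-1$, then $G$ is one-to-many $t$-disjoint path coverable if and only if $G+uv$ is. This should be the natural analogue of the Ore-type results of Lin et al.\ cited for Theorem \ref{thm0}(ii), proved by the usual rotation argument on a cover of $G+uv$ that uses $uv$. So we replace $G$ by its $(n+t-1)$-closure $H=\mathrm{cl}_{n+t-1}(G)$. Then $H$ is not one-to-many $t$-disjoint path coverable, $\delta(H)\ge\delta$, $N_s(H)\ge N_s(G)$, and any two nonadjacent vertices of $H$ have degree sum at most $n+t-2$.

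\textbf{Degree sequence step.} We next need a Chvátal-type degree-sequence obstruction. Order the degrees of $H$ as $d_1\le\dots\le d_n$. If $H$ is not coverable, there should be an integer $k$ with $\delta\le k\le \lfloor (n+t-2)/2\rfloor$ such that $d_{k-t+1}\le k$ and $d_{n-k}\le n+t-k-2$. One way to see this is to note that $H\vee \overline{K}$-type gadgets reduce one-to-many $t$-covers to Hamiltonicity or Hamilton-connectedness of an auxiliary graph. Alternatively we could invoke Pósa-type conditions from \cite{a23} directly.

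Given such a $k$, because $H$ is closed, the set $S$ of the $k-t+1$ low-degree vertices is independent-like, and every vertex outside the top block is nonadjacent to them. Counting $s$-cliques then gives
\[N_s(H)\le \binom{n+t-k-1}{s}+(k-t+1)\binom{k}{s-1}=f_1(n,t,k,s),\]
since every clique meets $S$ in at most one vertex and such a vertex has at most $k$ neighbours.

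\textbf{Optimisation and the main obstacle.} Finally, $f_1(n,t,k,s)$ is convex in $k$ on $[\delta,h]$ with $h=\lfloor (n+t-2)/2\rfloor$, so its maximum is attained at $k=\delta$ or $k=h$. The case $k=\delta$ contradicts the hypothesis directly. The case $k=h$ is the main obstacle, because $f_1(n,t,h,s)$ may exceed $g_1$. Here we must use that $\delta(G)=\delta<h$ exactly. Some vertex $w$ has degree $\delta$, and in $H$ either $w$ lies among the $h-t+1$ low vertices or it does not.
\begin{itemize}
\item If $w$ is low, its contribution drops from $\binom{h}{s-1}$ to $\binom{\delta}{s-1}$, which yields $g_1(n,t,\delta,s)$.
\item If $w$ is not low, we rechoose $k$ or use degree-sum bounds to get a count at most $\max\{f_1,g_1\}$.
\end{itemize}
The delicate point is ensuring that the closure does not raise the degree of the minimum-degree vertex beyond $h$. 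We would handle this by tracking a vertex $w$ with $d_G(w)=\delta$ and arguing that in the extremal structure $H\subseteq K_h\vee(K_{n+t-2h-1}\cup\overline{K}_{h-t+1})$, the vertex $w$ must lie in the independent part, with degree at most $\delta$ in $G$. Strict inequality in the hypothesis then gives the contradiction.
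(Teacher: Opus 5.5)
\textbf{The gap: the endpoint $k=h$ cannot be handled in the full closure.} You flag $k=h$ as the main obstacle but do not resolve it, and it cannot be resolved as you set things up, because you bound $N_s(H)$ for the full $(n+t-1)$-closure $H$. The closure can add edges at the minimum-degree vertex $w$. So in $H$ neither $d_H(w)=\delta$ nor even $d_H(w)\le h$ survives.

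This is not a technicality. Take $G=G_1(n,t,\delta)$. Each deleted pair $wy$ with $y\in V(K_h)$ has $d_G(w)+d_G(y)=\delta+n-2\ge n+t-1$. Hence the closure restores all $h-\delta$ edges, and $H=F_1(n,t,h)$ with $N_s(H)=f_1(n,t,h,s)=g_1(n,t,\delta,s)+\binom{h}{s-1}-\binom{\delta}{s-1}$. So whenever $g_1(n,t,\delta,s)>f_1(n,t,\delta,s)$, no valid upper bound on $N_s(H)$ can be as small as the threshold in the theorem.

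Your plan to show that $w$ ``must lie in the independent part of the extremal structure'' is therefore circular, since that structure is what is being proved. It is also false for $H$. The branch ``if $w$ is not low, rechoose $k$'' is left empty.

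Two smaller points. The degree-sequence obstruction is only asserted (``should be''). The claim that every clique meets $S$ in at most one vertex is unjustified, since nothing makes $S$ independent in $H$; fortunately the plain union bound does not need it.

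\textbf{The fix: count cliques in $G$.} The missing idea is to use $H$ only for degree information and to count cliques in $G$ itself.
\begin{enumerate}
\item Choose nonadjacent $u,v$ in $H$ maximizing $d_H(u)+d_H(v)$ (which is at most $n+t-2$), with $d_H(u)\le d_H(v)$, and put $k=d_H(u)$. Then $\delta\le k\le h$.
\item The $n-1-d_H(v)\ge k-t+1$ non-neighbours of $v$ all satisfy $d_H\le k$, hence $d_G\le k$. This is exactly the part of your degree-sequence claim that the count uses.
\item Since also $d_G(w)=\delta\le k$, you may take a set $S\ni w$ of $k-t+1$ vertices with $d_G\le k$. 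The union bound gives $N_s(G)\le\binom{\delta}{s-1}+(k-t)\binom{k}{s-1}+\binom{n+t-k-1}{s}=\binom{\delta}{s-1}+\lambda_1(n,t,s,k)$.
\item Convexity in $k$ on $[\delta,h]$ (Lemma \ref{lem1}, or directly the nonnegativity of second differences for integers $k\ge t$) yields exactly $f_1(n,t,\delta,s)$ at $k=\delta$ and $g_1(n,t,\delta,s)$ at $k=h$. This contradicts the hypothesis.
\end{enumerate}

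\textbf{Comparison with the paper.} The paper avoids the same trap differently. It applies Lemma \ref{lem2.1} only to pairs inside $V(G)\setminus\{w\}$, so $d_Q(w)=\delta$ is preserved in the closed graph $Q$. It then uses Kopylov's $h$-disintegration instead of a Chv\'atal-type degree condition: an empty-core case, a proof that the core $D$ is complete, and a comparison with the $(n+t-d)$-core. Repaired as above, your route is a valid and somewhat shorter alternative.
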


Let $t=2$ in Theorem \ref{thm2.1}, we can easily obtain the following corollary immediately.

\begin{cor}\emph{\rm{(Zhang \cite{a28})}}
	Let $s$ and $\delta$ be integers with $s\geq 2$ and $3\leq \delta\leq \left\lfloor \frac{n}{2} \right\rfloor $. Suppose that $G$ is a graph of order $n$ with minimum degree $\delta$. If
	$$
	N_s(G)> \max\{f_1(n,2,\delta,s), g_1(n,2,\delta,s)\},
	$$
	then $G$ is Hamilton-connected.
\end{cor}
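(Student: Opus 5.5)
The statement is the case $t=2$ of Theorem \ref{thm2.1}, combined with the fact recalled in the Introduction: a graph with at least $3$ vertices is one-to-many $2$-disjoint path coverable if and only if it is Hamilton-connected. So the plan is to specialize and then translate, not to re-prove anything.

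\emph{Step 1: check the hypotheses of Theorem \ref{thm2.1} at $t=2$.} That theorem requires $s\ge 2$ and $3\le t+1\le \delta\le \left\lfloor \frac{n+t-2}{2}\right\rfloor$. With $t=2$ this becomes
$$3\le \delta\le \left\lfloor \tfrac{n}{2}\right\rfloor,$$
which is exactly the range in the corollary. The threshold $\max\{f_1(n,2,\delta,s),g_1(n,2,\delta,s)\}$ is literally the threshold of Theorem \ref{thm2.1} with $t=2$. So any $G$ satisfying the corollary's hypotheses is one-to-many $2$-disjoint path coverable.

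\emph{Step 2: translate to Hamilton-connectedness.} First, $\delta\ge 3$ forces $n\ge 4$, so the equivalence applies. Let $u\ne v$ be vertices of $G$. Since $\delta\ge 3$, we can choose a neighbour $w\ne v$ of $u$. Apply the one-to-many $2$-cover with source $u$ and sinks $\{v,w\}$. This gives two paths $P_1$ from $u$ to $v$ and $P_2$ from $u$ to $w$; they share only $u$ and together cover $V(G)$. Reverse $P_2$ so that it runs from $w$ to $u$, delete its final vertex $u$, and append the edge $uw$. Concretely, the walk
$$v \,P_1^{-1}\, u\; w \,P_2\setminus\{u\}$$
is a Hamilton $u$--$v$ path. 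It is in fact a path from $v$ through $P_1$ back to $u$, then to $w$, then along the rest of $P_2$.

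\emph{Caveat on the endpoint.} The path just built ends at the far end of $P_2$, which is $w$ itself, so its endpoints are $v$ and $w$, not $u$ and $v$. The correct reading is the one the paper quotes: the standard argument picks $w$ as a neighbour of $v$ rather than of $u$. Then the reversed $P_2$ is glued at $u$, $P_1$ runs from $u$ to $v$, and the edge $vw$ appended at $v$ lets the path finish along $P_2$ starting from $w$. Rather than redo this bookkeeping, I would simply cite the equivalence stated in the Introduction. This citation is the only potential subtlety.

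\emph{Step 3: conclude.} With the equivalence in hand, $G$ is Hamilton-connected, which is the statement. No step is a real obstacle; the only care needed is confirming that the parameter ranges and thresholds match exactly, as done in Step 1.
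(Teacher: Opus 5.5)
Your overall route is the paper's own: there the corollary is obtained simply by setting $t=2$ in Theorem~\ref{thm2.1} and invoking the equivalence, stated in the Introduction, between one-to-many $2$-disjoint path coverability and Hamilton-connectedness. Your Step~1 parameter and threshold check is correct. If you simply cite that equivalence, the proof is complete.

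The direct derivation you sketch in Step~2 is wrong in both versions, and so is the ``correct reading'' you attribute to the standard argument. The culprit is taking $u$ as the source. When the two paths of a one-to-many cover are concatenated, their common source becomes an interior vertex, so $u$ can never end up as an endpoint. Concretely:
\begin{itemize}
\item Your first walk $v\,P_1^{-1}\,u\,w\,\ldots$ is a Hamilton path from $v$ to the successor of $u$ on $P_2$. It does not end at $w$, since $w$ sits right after $u$.
\item Your second version, with $w\in N(v)$, produces the Hamilton cycle $w\,P_2^{-1}\,u\,P_1\,v\,w$. This is not a $u$--$v$ path, and it yields one only in the degenerate case where $P_1$ is the single edge $uv$.
\end{itemize}

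The correct argument is short. Since $n\ge 3$, pick any $z\notin\{u,v\}$ and apply the one-to-many $2$-cover with source $z$ and sinks $\{u,v\}$. The paths $P_1$ (from $z$ to $u$) and $P_2$ (from $z$ to $v$) meet only at $z$ and cover $V(G)$, so $u\,P_1^{-1}\,z\,P_2\,v$ is a Hamilton $u$--$v$ path. No neighbour of $u$ or $v$ and no use of $\delta\ge 3$ is needed. Replace Step~2 by this argument, or delete it and rely on the cited equivalence as the paper does.
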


Let $s=2$ in Theorem \ref{thm2.1}. we have the following corollary immediately.

\begin{cor}\label{cor2.2}
	Let $t$, $\delta$ be integers with $3\leq t+1\leq \delta\leq \left\lfloor \frac{n+t-2}{2} \right\rfloor $. Suppose that $G$ is a graph of order $n$ with minimum degree $\delta$. If
	$$
	e(G)> \max\{f_1(n,t,\delta,2), g_1(n,t,\delta,2)\},
	$$
	then $G$ is one-to-many $t$-disjoint path coverable.
\end{cor}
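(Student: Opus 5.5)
The statement to prove is Corollary \ref{cor2.2}. It is the special case $s=2$ of Theorem \ref{thm2.1}, so the plan is simply to specialize that theorem.

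First, I will check that the hypotheses of Corollary \ref{cor2.2} match those of Theorem \ref{thm2.1}. The integer constraints $3\leq t+1\leq \delta\leq \left\lfloor \frac{n+t-2}{2}\right\rfloor$ are identical in both statements. The value $s=2$ satisfies the requirement $s\geq 2$. Thus every numerical hypothesis of Theorem \ref{thm2.1} holds with $s=2$.

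Second, I will use the identity $N_2(G)=e(G)$, which the paper already notes: a $2$-clique is exactly a pair of adjacent vertices, that is, an edge. Hence the assumption $e(G)>\max\{f_1(n,t,\delta,2),g_1(n,t,\delta,2)\}$ is literally the condition $N_s(G)>\max\{f_1(n,t,\delta,s),g_1(n,t,\delta,s)\}$ with $s=2$.

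Applying Theorem \ref{thm2.1} then shows that $G$ is one-to-many $t$-disjoint path coverable. There is no real obstacle here. The only care needed is that the functions $f_1$ and $g_1$ are defined for all $s\geq 2$ and so specialize without modification. For reference, they become
\[
f_1(n,t,\delta,2)=\binom{n+t-\delta-1}{2}+(\delta-t+1)\delta
\]
and
\[
g_1(n,t,\delta,2)=\binom{n+t-h-1}{2}+(h-t)h+\delta,
\]
where $h=\left\lfloor\frac{n+t-2}{2}\right\rfloor$, obtained from $\binom{x}{1}=x$.
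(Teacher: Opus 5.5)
Your proposal is correct and is exactly the paper's argument: the paper obtains Corollary~\ref{cor2.2} by setting $s=2$ in Theorem~\ref{thm2.1}, where the hypotheses on $t$ and $\delta$ are identical, and using $N_2(G)=e(G)$. Your explicit specializations of $f_1(n,t,\delta,2)$ and $g_1(n,t,\delta,2)$, with $h=\left\lfloor\frac{n+t-2}{2}\right\rfloor$, are also correct.
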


Corollary \ref{cor2.2} immediately yields the following corollary.

\begin{cor}\label{cor2.3}
	Let $n\geq 3$, $3\leq t+1\leq \delta\leq \left\lfloor \frac{n+t-2}{2} \right\rfloor $ and  $h_1=\max\{f_1(n,t,\delta,2), g_1(n,t,\delta,2)\}$. Suppose that $G$ is a graph of order $n$ with minimum degree $\delta$. If
	$$
	\lambda_\alpha(G)> f_\alpha(n,\delta,h_1),
	$$
	then $G$ is one-to-many $t$-disjoint path coverable.
\end{cor}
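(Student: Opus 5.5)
The plan is to deduce Corollary~\ref{cor2.3} from Corollary~\ref{cor2.2}. We need an upper bound on $\lambda_\alpha(G)$ in terms of $n$, $\delta$ and $e(G)$, which is exactly what $f_\alpha$ encodes. Concretely, I would first establish (or quote, if it is among the known results collected in Section~\ref{sec3}) the inequality
$$\lambda_\alpha(G)\le f_\alpha(n,\delta,e(G))$$
for every connected graph $G$ of order $n$ with minimum degree $\delta$ and $\alpha\in[0,1)$. This is the $A_\alpha$-analogue of the Hong--Shu--Fang bound. Granting it, the corollary follows from monotonicity. For fixed $n,\delta$, the function $m\mapsto f_\alpha(n,\delta,m)$ is strictly increasing, since $1-\alpha>0$ and $m$ enters only through the radicand. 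Hence $f_\alpha(n,\delta,e(G))\ge \lambda_\alpha(G)>f_\alpha(n,\delta,h_1)$ forces $e(G)>h_1=\max\{f_1(n,t,\delta,2),g_1(n,t,\delta,2)\}$. The hypotheses on $t,\delta,n$ are identical to those of Corollary~\ref{cor2.2}, so that corollary gives that $G$ is one-to-many $t$-disjoint path coverable.

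For the spectral bound, I would follow the standard Perron-vector argument. Let $\lambda=\lambda_\alpha(G)$ and $x$ the positive unit-free Perron vector of $A_\alpha(G)$, normalized so that $x_u=\max_v x_v=1$. Write $b=\delta-1+\alpha(n-\delta)$ and $c=(1-\alpha)(2m-(n-1)\delta)$. The target is $\lambda^2-b\lambda-c\le 0$, which gives $\lambda\le f_\alpha(n,\delta,m)$ as the larger root.

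To obtain it, I would evaluate the $u$-th coordinate of $\big(A_\alpha^2-bA_\alpha\big)x=(\lambda^2-b\lambda)x$. Expanding $A_\alpha^2=\alpha^2D^2+\alpha(1-\alpha)(DA+AD)+(1-\alpha)^2A^2$ and bounding every $x_v$ by $1$, I would reduce the right-hand side to a combination of $d_u$, $\sum_{v\sim u}d_v$ and $\sum_{v\sim u}x_v$. The key estimate is
$$\sum_{v\sim u}d_v\le 2m-d_u-(n-1-d_u)\delta,$$
which holds because every vertex not adjacent to $u$ (other than $u$) has degree at least $\delta$. Substituting $\delta\le d_u\le n-1$ should leave the coefficients of $d_u$ with a favourable sign, with the $-bA_\alpha$ term absorbing the $\alpha d_u$ contributions. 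The result should be exactly $\lambda^2-b\lambda\le c$.

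The main obstacle is this bookkeeping with general $\alpha$. In particular, I must check that after using $x_v\le 1$ the leftover terms in $d_u$ are nonpositive, since this is where the specific choice $b=\delta-1+\alpha(n-\delta)$ enters. If the direct second-power computation becomes messy, I would fall back on citing the known bound (due to Wu, Xue and Kang, used for $k$-leaf-connectivity in \cite{a29}), which is precisely of this form. The remaining steps, monotonicity and the appeal to Corollary~\ref{cor2.2}, are routine.
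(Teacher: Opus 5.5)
Your argument is the paper's own. It invokes the Wu--Xue--Kang bound $\lambda_\alpha(G)\le f_\alpha(n,\delta,e(G))$ (Lemma~\ref{lem0}), uses that $f_\alpha(n,\delta,m)$ is strictly increasing in $m$ (the paper leaves this implicit) to force $e(G)>h_1$, and concludes by Corollary~\ref{cor2.2}, so quoting the lemma suffices.

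If you want a self-contained proof of the bound, your sketched Perron-vector route does not close as written. Evaluating only the coordinate $u$ with $x_u=1$ and bounding $x_v\le 1$ leaves the residual term $(1-\alpha)(\delta-1)(d_u-\lambda)-\alpha(n-1-d_u)\lambda$. For $\alpha=0$ this is positive at a hub of $K_{2,N}$ with $N\ge 3$; the underlying problem is that $A_\alpha^2-bA_\alpha$ has negative entries, which block the pointwise estimate. Instead, sum the eigen-equation over all vertices: $(\lambda^2-b\lambda)\sum_w x_w=\sum_w r_w x_w$, where the row sums are $r_w=\alpha d_w^2+(1-\alpha)\sum_{v\sim w}d_v-b\,d_w$ and satisfy $r_w\le c+\alpha d_w(d_w-n+1)\le c$.
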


We finally obtain the following theorem.

\begin{thm}\label{thm3.1}
	Let $s$, $t$, $\delta$ be integers with $s\geq 2$ and $4\leq 2t\leq \delta\leq \left\lfloor \frac{n+t-1}{2} \right\rfloor $. Suppose that $G$ is a graph of order $n$ with minimum degree $\delta$. If
	$$
	N_s(G)> \max\{f_0(n,t,\delta,s), g_0(n,t,\delta,s)\},
	$$
	then $G$ is many-to-many $t$-disjoint path coverable.
\end{thm}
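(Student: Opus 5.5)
We argue by contradiction via a closure argument. The key tool is an Ore-type closure for many-to-many $t$-disjoint path coverability, which we expect among the known results of Section \ref{sec3} (from Lin et al.\ \cite{a18,a20}). If $u,v$ are nonadjacent with $d_G(u)+d_G(v)\ge n+t$, then $G$ is many-to-many $t$-disjoint path coverable if and only if $G+uv$ is. Iterating gives the $(n+t)$-closure $\mathrm{cl}_{n+t}(G)$. Suppose $G$ is not many-to-many $t$-disjoint path coverable. Then $H=\mathrm{cl}_{n+t}(G)$ is not either, so $H$ is not complete. Moreover $\delta(H)\ge\delta$, every pair of nonadjacent vertices of $H$ has degree sum at most $n+t-1$, and $N_s(H)\ge N_s(G)$.

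\textbf{Step 1 (Chvátal/Pósa-type structure).} We use the known Pósa/Bondy-type degree-sequence condition for many-to-many coverability (\cite{a23}). Since $H$ is not coverable, there is an integer $k$ with $\delta\le k\le \lfloor (n+t-1)/2\rfloor=h$ such that $H$ has at least $k-t$ vertices of degree at most $k$. In analogy with the $t=2$ Hamiltonian case, a more convenient route may be to choose a nonadjacent pair $u,v$ in $H$ with $d(u)+d(v)$ maximum and run the standard counting argument. Either way we obtain a set $S$ of $k-t$ vertices, each of degree at most $k$. Counting cliques, each $s$-clique either lies inside $V(H)\setminus S$ or contains a vertex of $S$. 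Moreover, the vertices of $S$ are pairwise nonadjacent after closure, or adjacency among them can be bounded. This yields
$$N_s(H)\le \binom{n+t-k}{s}+(k-t)\binom{k}{s-1},$$
which is exactly the clique count of $F_0(n,t,k)$. We must be careful here: the nonclosure condition forces $S$ to be essentially independent with all neighbours in a $k$-set. This is what makes the bound tight.

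\textbf{Step 2 (optimisation over $k$).} The function $\phi(k)=\binom{n+t-k}{s}+(k-t)\binom{k}{s-1}$ is convex in $k$ on $[\delta,h]$. So its maximum is attained at $k=\delta$, giving $f_0(n,t,\delta,s)$, or at $k=h$. When $k=h$ the bound must be refined using $\delta(H)\ge\delta$ together with the fact that $G$, not $H$, has minimum degree exactly $\delta$. A minimum-degree vertex $w$ of $G$ keeps degree $\delta$ in $H$ unless closure adds edges at $w$. In the $k=h$ extremal structure, such a vertex contributes only $\binom{\delta}{s-1}$ cliques through itself rather than $\binom{h}{s-1}$. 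This gives the bound $g_0(n,t,\delta,s)$. The case split is therefore: either some low-degree vertex of $S$ has degree less than $h$ (subcases handled by convexity), or $H$ is a subgraph of $K_h\vee(K_{n+t-2h}\cup\overline{K}_{h-t})$, in which case we count cliques directly. We may also need the hypothesis $2t\le\delta$ to ensure $S$ is nonempty and that the closure result applies.

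\textbf{Main obstacle.} We expect the main obstacle to be Step 1 in the many-to-many setting. We need a clean closure lemma together with a structural (Chvátal-type) characterization of non-coverable closed graphs. From it we must extract a set of $k-t$ vertices of degree at most $k$, all attached to a common $k$-set, so that the clique bound matches $F_0$ exactly. We would first try to adapt Zhang's proof for the Hamiltonian case ($t=2$, using $n$-closure), replacing $n$ by $n+t$ throughout. If the structural lemma is not available directly, we would derive it from Theorem \ref{thm0}(iii) applied to the closed graph, via the degree-sequence argument.
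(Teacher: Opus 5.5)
Your Step 1 is fine if you take your second route, but Step 2 has a genuine gap. Convexity of $\phi(k)=\binom{n+t-k}{s}+(k-t)\binom{k}{s-1}$ on $[\delta,h]$ only gives $N_s(H)\le\max\{f_0(n,t,\delta,s),\phi(h)\}$. But $\phi(h)-g_0(n,t,\delta,s)=\binom{h}{s-1}-\binom{\delta}{s-1}$, which is positive as soon as $\delta<h$ and $h\ge s-1$.

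No argument about $H$ alone can close this. Take $G=G_0(n,t,\delta)$: each of the $h-\delta$ missing pairs $wx$ has $d(w)+d(x)=\delta+n-2\ge n+t$. So the full closure restores them, giving $H=K_h\vee(K_{n+t-2h}\cup\overline{K}_{h-t})$ with $N_s(H)=\phi(h)$.

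Your proposed dichotomy does not repair this either. A single vertex of $S$ of degree $h-1$ saves only $\binom{h}{s-1}-\binom{h-1}{s-1}$. ``Every vertex of $S$ has degree $h$'' does not force $H\subseteq K_h\vee(K_{n+t-2h}\cup\overline{K}_{h-t})$. And even inside that structure the clique count of $H$ is still $\phi(h)$.

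The missing idea is to split off the minimum-degree vertex $w$ of $G$ \emph{before} optimizing over $k$, and to count cliques of $G$ rather than of $H$. Take $S$ of size exactly $k-t$. Since $G\subseteq H$, each vertex of $S$ lies in at most $\binom{k}{s-1}$ $s$-cliques of $G$, while $w$ lies in at most $\binom{\delta}{s-1}$.
\begin{itemize}
\item If $w\in S$, you get $N_s(G)\le\binom{\delta}{s-1}+\lambda_0(n,t,s,k)$, with $\lambda_0$ as in Lemma~\ref{lem1}.
\item If $w\notin S$, you get $\binom{\delta}{s-1}+(k-t)\binom{k}{s-1}+\binom{n+t-k-1}{s}$. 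This is no larger, because $\binom{k}{s-1}\le\binom{n+t-k-1}{s-1}$ when $2k\le n+t-1$.
\end{itemize}
With $w$ separated, the convex function has endpoint values exactly $f_0$ at $k=\delta$ and $g_0$ at $k=h$, which is what the theorem needs.

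The paper gets the same separation differently. It closes only inside $G-w$, so $w$ keeps degree $\delta$ in $Q$. It then runs Kopylov's $h$-disintegration with $w$ deleted first, which yields $N_s(Q)\le\binom{\delta}{s-1}+\lambda_0(n,t,s,n+t-d)$.

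Finally, Step 1 is not the real obstacle, and you should not lean on an unspecified P\'osa-type theorem from \cite{a23}, nor try to derive one from the Dirac-type Theorem~\ref{thm0}(iii). Take a nonadjacent pair $u,v$ of $H$ with maximum degree sum and $d_H(u)\le d_H(v)$, and put $k=d_H(u)$. Then:
\begin{itemize}
\item $\delta\le k\le h$;
\item every vertex $x\ne v$ nonadjacent to $v$ has $d_H(x)\le k$;
\item there are $n-1-d_H(v)\ge k-t\ge t$ such vertices.
\end{itemize}
The bound $N_s(H)\le\phi(k)$ is then a plain union bound; no independence of $S$ is needed.
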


Let $s=2$ in Theorem \ref{thm3.1}. we have the following corollary immediately.

\begin{cor}\label{cor3.2}
	Let $t$, $\delta$ be integers with $4\leq 2t\leq \delta\leq \left\lfloor \frac{n+t-1}{2} \right\rfloor $. Suppose that $G$ is a graph of order $n$ with minimum degree $\delta$. If
	$$
	e(G)> \max\{f_0(n,t,\delta,2), g_0(n,t,\delta,2)\},
	$$
	then $G$ is many-to-many $t$-disjoint path coverable.
\end{cor}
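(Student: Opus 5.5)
The statement is Corollary~\ref{cor3.2}, which is the case $s=2$ of Theorem~\ref{thm3.1}. I would deduce it directly from that theorem, with no new combinatorial work.

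First, $N_2(G)$ counts $2$-cliques, and a $2$-clique is the same thing as an edge. So $N_2(G)=e(G)$ for every graph $G$.

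Second, I check that the hypotheses line up. The corollary assumes integers $t,\delta$ with $4\leq 2t\leq \delta\leq \left\lfloor \frac{n+t-1}{2}\right\rfloor$ and a graph $G$ of order $n$ with minimum degree $\delta$. These are exactly the hypotheses of Theorem~\ref{thm3.1}, together with the choice $s=2$, which is admissible because that theorem only requires $s\geq 2$.

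Third, the assumed inequality $e(G)>\max\{f_0(n,t,\delta,2),g_0(n,t,\delta,2)\}$ can be rewritten using $N_2(G)=e(G)$ as
$$
N_2(G)>\max\{f_0(n,t,\delta,2),\,g_0(n,t,\delta,2)\},
$$
which is precisely the clique condition of Theorem~\ref{thm3.1} with $s=2$. For completeness, specializing the formulas in the notation, $f_0(n,t,\delta,2)=\binom{n+t-\delta}{2}+(\delta-t)\delta$ is the number of edges of $F_0(n,t,\delta)$, and similarly $g_0(n,t,\delta,2)$ is the number of edges of $G_0(n,t,\delta)$. So both bounds are the edge counts of the two extremal constructions, as expected.

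Applying Theorem~\ref{thm3.1} then shows that $G$ is many-to-many $t$-disjoint path coverable, which completes the deduction. There is no real obstacle: the only thing to verify is that the parameter range of the corollary coincides with that of Theorem~\ref{thm3.1} when $s=2$, and it does.
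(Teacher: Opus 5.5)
Your proposal is correct and matches the paper's argument: the paper obtains Corollary~\ref{cor3.2} by setting $s=2$ in Theorem~\ref{thm3.1}, using $N_2(G)=e(G)$ under the same hypotheses on $t$ and $\delta$. Your extra check that $f_0(n,t,\delta,2)$ and $g_0(n,t,\delta,2)$ are the edge counts of $F_0(n,t,\delta)$ and $G_0(n,t,\delta)$ is accurate but not needed for the deduction.
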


Corollary \ref{cor3.2} immediately yields the following corollary.

\begin{cor}\label{cor3.3}
	Let $n\geq 3$, $4\leq 2t\leq \delta\leq \left\lfloor \frac{n+t-1}{2} \right\rfloor $ and  $h_0=\max\{f_0(n,t,\delta,2), g_0(n,t,\delta,2)\}$. Suppose that $G$ is a graph of order $n$ with minimum degree $\delta$. If
	$$
	\lambda_\alpha(G)> f_\alpha(n,\delta,h_0),
	$$
	then $G$ is many-to-many $t$-disjoint path coverable.
\end{cor}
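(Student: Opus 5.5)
The statement immediately above is Corollary \ref{cor3.3}. It is a spectral-to-edge reduction: I plan to derive it from Corollary \ref{cor3.2} by bounding $\lambda_\alpha(G)$ above in terms of $n$, $\delta$ and $m=e(G)$. The key tool is the known upper bound, valid for any graph with $n$ vertices, $m$ edges and minimum degree $\delta$, $0\le\alpha<1$:
$$\lambda_\alpha(G)\le \frac{\delta+\alpha n-\alpha\delta-1+\sqrt{(\delta+\alpha n-\alpha\delta-1)^2+4(1-\alpha)(2m-n\delta+\delta)}}{2}=f_\alpha(n,\delta,m).$$
This is the $A_\alpha$ analogue of the Hong--Shu--Fang / Nikiforov bound. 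If it is not among the preliminaries of Section \ref{sec3}, I will prove it as follows.

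Let $x$ be the positive unit Perron vector of $A_\alpha(G)$ and $\lambda=\lambda_\alpha(G)$.

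\textbf{Entrywise bound.} Compute $\lambda^2 x_u$ via $A_\alpha^2$. The standard approach bounds $\sum_{v\sim u}d(v)\le 2m-(n-1-d(u))\delta-d(u)$, using that the non-neighbours of $u$ other than $u$ each have degree at least $\delta$. One then writes
$$\lambda^2x_u=\alpha d(u)\lambda x_u+(1-\alpha)\sum_{v\sim u}\lambda x_v.$$
Combined with $\lambda x_v=\alpha d(v)x_v+(1-\alpha)\sum_{w\sim v}x_w$, this gives an inequality of the form
$$\lambda^2 x_u\le (\delta+\alpha n-\alpha\delta-1)\lambda x_u+(1-\alpha)(2m-n\delta+\delta)\max_w x_w.$$

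\textbf{Quadratic inequality.} Apply the entrywise bound at the vertex $u$ maximizing $x_u$. This yields
$$\lambda^2-(\delta+\alpha n-\alpha\delta-1)\lambda-(1-\alpha)(2m-n\delta+\delta)\le 0.$$
Since $\lambda$ is at least the larger root's competitor, i.e. $\lambda$ lies between the roots, this gives $\lambda\le f_\alpha(n,\delta,m)$.

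\textbf{Main obstacle.} The delicate step is the careful bookkeeping for the $\alpha d(u)$ term, where $d(u)$ must be bounded by $n-1$ in the right place. That is exactly what produces the coefficient $\delta+\alpha(n-\delta)-1$.

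\textbf{Conclusion.} The function $f_\alpha(n,\delta,m)$ is strictly increasing in $m$, because the radicand increases with $m$. Suppose $\lambda_\alpha(G)>f_\alpha(n,\delta,h_0)$. Then
$$f_\alpha(n,\delta,e(G))\ge\lambda_\alpha(G)>f_\alpha(n,\delta,h_0),$$
hence $e(G)>h_0=\max\{f_0(n,t,\delta,2),g_0(n,t,\delta,2)\}$. The hypotheses $4\le 2t\le\delta\le\lfloor\frac{n+t-1}{2}\rfloor$ are the same as in Corollary \ref{cor3.2}, so that corollary applies and gives that $G$ is many-to-many $t$-disjoint path coverable.
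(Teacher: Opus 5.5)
Your argument is the paper's own: the bound $\lambda_\alpha(G)\le f_\alpha(n,\delta,e(G))$ is Lemma \ref{lem0} (Wu, Xue and Kang), monotonicity of $f_\alpha$ in $m$ then forces $e(G)>h_0$, and Corollary \ref{cor3.2} finishes. Your fallback derivation of the bound is therefore unnecessary, and as sketched it would not work: at the vertex maximizing $x_u$ the eigen-equation only gives $\lambda\le d(u)$, which is the wrong direction for turning the $(1-\alpha)(\delta-1)d(u)$ term into a multiple of $\lambda$ (the standard proof instead sums the eigen-equation over all vertices, using $\mathbf{1}^{T}A_\alpha(G)=(d(v))_{v}^{T}$ and $d(v)\le n-1$).
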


\noindent\textbf{Remark:} (a) For graph $F_2(n,t,\delta)$ (resp. $G_2(n,t,\delta)$), there does not exist a one-to-one $t$-disjoint path cover for any two distinct vertices in $K_{\delta-t+2}$. Thus the graph $F_2(n,t,\delta)$ (resp. $G_2(n,t,\delta)$) is not one-to-one $t$-disjoint path coverable.

(b) For graph $F_1(n,t,\delta)$ (resp. $G_1(n,t,\delta)$), there does not exist a one-to-many $t$-disjoint path cover for two disjoint subsets $U=\{u\}$ of $V(K_{\delta-t+2})$ and $V=\{v_1,v_2,\dots,v_t\}$ of $V(K_\delta )$. Thus the graph $F_1(n,t,\delta)$ (resp. $G_1(n,t,\delta)$) is not one-to-many $t$-disjoint path coverable.

(c) For graph $F_0(n,t,\delta)$ (resp. $G_0(n,t,\delta)$), there does not exist a many-to-many $t$-disjoint path cover for two disjoint subsets $U=\{u_1,u_2,\dots,u_t\}$ and $V=\{v_1,v_2,\dots,v_t\}$ of $V(K_\delta )$. Thus the graph $F_0(n,t,\delta)$ (resp. $G_0(n,t,\delta)$) is not many-to-many $t$-disjoint path coverable.

These examples show that the bounds on the number of cliques in this paper are sharp.

\section{Preliminary}\label{sec3}

The following Lemmas will be used in our later proofs.

\begin{lem}\label{lem1.1}\emph{\rm{(Lin et al. \cite{a18})}}
	Let $G$ be a graph with $n\geq t+1\geq 3$ vertices. Suppose that there exist two nonadjacent vertices $u$ and $v$ with $d_G(u)+d_G(v)\geq n+t-2$. Then $G$ is one-to-one $t$-disjoint path coverable if and only if $G+uv$ is one-to-one $t$-disjoint path coverable.
\end{lem}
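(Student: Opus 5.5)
The direction ``$G$ coverable $\Rightarrow$ $G+uv$ coverable'' is trivial, since every path system in $G$ is also a path system in $G+uv$. For the converse I would run a Bondy--Chv\'atal closure argument adapted to the ``theta'' structure of a one-to-one cover. Suppose $G+uv$ is one-to-one $t$-disjoint path coverable but $G$ is not. Then there are distinct vertices $x,y$ for which $G$ has no one-to-one $t$-disjoint path cover. Take a cover $P_1,\dots,P_t$ of $G+uv$ for $x,y$. It must use $uv$, say on $P_1$. Orient every $P_i$ from $x$ to $y$ and write $w^-$, $w^+$ for predecessor and successor. Without loss of generality $P_1=x\,Q_1\,u\,v\,Q_2\,y$, with $u$ before $v$.

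\textbf{Step 1: rerouting configurations.} Each of the following would produce a cover of $G$, so none can occur.
\begin{itemize}
\item[(a)] There is $w$ on the segment $v\ldots y$ of $P_1$, $w\neq v$, with $uw,\ vw^-\in E(G)$. Then replace $P_1$ by $x\ldots u\,w^{\,}\ldots$ read backwards to $v$, then $v\,w\ldots$; more precisely by $x\ldots u\,w^-\!\leftarrow\cdots\leftarrow v$ followed by $v$ to $w$ and then $w\ldots y$. The exact orientation is to be fixed so that the indices match.
\item[(b)] The symmetric configuration on the segment $x\ldots u$.
\item[(c)] There is $w\in V(P_i)\setminus\{x\}$ with $i\ge 2$, $uw\in E(G)$ and $vw^-\in E(G)$. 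Then set $P_1'=x\ldots u\,w\ldots y$, where the tail $w\ldots y$ is taken along $P_i$, and $P_i'=x\ldots w^-\,v\ldots y$, where the tail $v\ldots y$ is taken along $P_1$.
\end{itemize}
The same choice of index shift, namely ``$u\sim w$ forbids $v\sim w^-$'', should handle (a) and (c) uniformly. Only (b) needs the reverse orientation, or equivalently the roles of $u$ and $v$ exchanged.

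\textbf{Step 2: counting.} Consider the set $S$ of ordered ``slots'' $(w^-,w)$ over all edges of all paths, excluding the edge $uv$. There are $(n-2)+t-1=n+t-3$ such slots, because the edges of the $t$ paths number $n-2+t$.

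Map $N_G(u)$ into slots via $w\mapsto(w^-,w)$ and $N_G(v)$ via $z\mapsto(z,z^+)$, making a suitable choice on the $x\ldots u$ part so that (b) is covered. Step 1 then says the two images are disjoint. I must also check that both maps are injective and well defined, and that the vertices where they fail are exactly compensated:
\begin{itemize}
\item $x$ has no predecessor but appears on $t$ paths;
\item $y$ has no successor;
\item $u\notin N(u)$ and $v\notin N(v)$;
\item $v\in N(u)$ and $u\in N(v)$ do not hold in $G$.
\end{itemize}
If the bookkeeping works, this gives $d_G(u)+d_G(v)\le n+t-3$, contradicting the hypothesis $d_G(u)+d_G(v)\ge n+t-2$.

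\textbf{Main obstacle.} I expect the difficulty to lie in the boundary cases of the counting, not in the rerouting. These include:
\begin{itemize}
\item $u$ or $v$ coinciding with $x$ or $y$;
\item a path $P_i$ that is the single edge $xy$, whose only slot is ``degenerate'';
\item neighbors of $u$ that equal $x$ or $y$ themselves, since these lie on all $t$ paths at once.
\end{itemize}
For the case where the endpoint is $x$ or $y$, I would first try to show that $u,v\notin\{x,y\}$ can be assumed, or else treat those cases by a direct count. For example, if $u=x$, then $uv$ is the first edge of $P_1$ and the rerouting (c) changes the first edge. For the edge $xy$, I would count $x$ and $y$ at most once per path and verify that this still leaves at most $n+t-3$ admissible slots. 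The hypothesis $n\ge t+1\ge 3$ should be exactly what guarantees at least one internal vertex, so that the slot count is valid.
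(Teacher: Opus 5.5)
The paper does not prove this lemma; it quotes it from Lin et al., so there is no in-paper proof to compare with. Judged on its own, your strategy is the right one: take a cover of $G+uv$ through $uv$, forbid the rerouting patterns, and inject $N_G(u)$ and $N_G(v)$ into the $n+t-3$ path edges other than $uv$. The slot count is correct. There is, however, a genuine gap, and it sits exactly in the two places that carry the content.

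\textbf{The orientation.} Configuration (a) is misstated, and the ``uniform'' orientation you build on it is wrong. Take $w$ after $v$ on $P_1$ with $uw,vw^-\in E(G)$. This gives no rerouting. For example, let $P_1=x\,u\,v\,a_1\,a_2\,y$ with $ua_2\in E(G)$, and suppose these are the only edges on $V(P_1)$. Then $v$ has degree $1$ there, so there is no Hamilton $x$--$y$ path on $V(P_1)$. The path you actually write down uses $uw^-$ and $vw$ instead. The correct rule is as follows.
\begin{itemize}
\item On \emph{both} segments of $P_1$, the forbidden pattern is ``$u$ adjacent to the earlier end and $v$ adjacent to the later end of a path edge''; the reroute reverses the stretch between them.
\item On $P_i$, $i\ge 2$, it is the opposite, as in your (c).
\end{itemize}
So (a) and (b) go together and (c) is the odd one out. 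If you map $N_G(u)$ by $w\mapsto(w^-,w)$ on the $v\ldots y$ part of $P_1$, the two images overlap in the example above. Disjointness, which is the whole point of the count, then fails. The maps should be:
\begin{itemize}
\item for $z\in N_G(u)$: $z\mapsto(z,z^+)$ if $z\in V(P_1)$, and $z\mapsto(z^-,z)$ if $z$ is internal to some $P_i$, $i\ge 2$;
\item for $z\in N_G(v)$: the reverse.
\end{itemize}

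\textbf{The endpoints.} You flag this bookkeeping as the main obstacle, but leave it open and partly misdirected.
\begin{itemize}
\item $x$ and $y$ contribute one each to $d_G(u)+d_G(v)$, so each needs a single slot, not one per path.
\item Under the rule above, the only neighbours with no slot are $x\in N_G(v)$ (no predecessor on $P_1$) and $y\in N_G(u)$ (no successor on $P_1$). Send them to $(x,x^+)$ and $(y^-,y)$ on one fixed second path $P_2$. A collision there is precisely (c) with $w^-=x$ or $w=y$.
\item This is where $t\ge 2$ is used, not $n\ge t+1$ as you guess. For $t=1$ the analogous statement fails, e.g.\ $G=P_3$.
\item There is one more trap. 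When $u=x$ or $v=y$, that rerouting turns $P_1'$ or $P_2'$ into the one-edge path $xy$. This is illegal if some other $P_j$ already is that edge. Avoid it by taking $P_2$ to be the path $xy$ whenever one of $P_2,\ldots,P_t$ is.
\item You cannot assume $\{u,v\}\cap\{x,y\}=\emptyset$, since the bad pair $x,y$ is arbitrary. With these choices, though, no separate case is needed. If $u=x$ and $v=y$, then $P_1$ is the new edge and all neighbours are internal vertices of $P_2,\ldots,P_t$.
\end{itemize}
Then both maps are injective with disjoint images, and $d_G(u)+d_G(v)\le n+t-3$ follows.
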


\begin{lem}\label{lem2.1} \emph{\rm{(Lin et al. \cite{a19})}}
	Let $G$ be a graph with $n\geq t+1\geq 3$ vertices. Suppose that there exist two nonadjacent vertices $u$ and $v$ with $d_G(u)+d_G(v)\geq n+t-1$. Then $G$ is one-to-many $t$-disjoint path coverable if and only if $G+uv$ is one-to-many $t$-disjoint path coverable.
\end{lem}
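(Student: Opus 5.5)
The forward direction is immediate: $G$ is a spanning subgraph of $G+uv$, so any one-to-many $t$-disjoint path cover of $G$ is also one of $G+uv$. For the converse, I would fix an arbitrary source $x$ and sink set $Y=\{y_1,\dots,y_t\}$ with $x\notin Y$. I would take a one-to-many $t$-disjoint path cover $P_1,\dots,P_t$ of $G+uv$, where $P_i$ runs from $x$ to $y_i$, and show that it can be converted into one that avoids $uv$. If no $P_i$ uses $uv$ we are done. So assume $uv\in E(P_1)$, and orient every $P_i$ from $x$ to $y_i$, so that $v=u^+$ on $P_1$. For a vertex $w$ write $w^+$ for its successor and $w^-$ for its predecessor on its own path. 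Note that $x$ has $t$ successors, one on each path.

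First I would try a reduction to Lemma \ref{lem1.1}. Let $G'$ be obtained from $G$ by adding a new vertex $z$ adjacent exactly to $y_1,\dots,y_t$. One-to-many covers of $G$ from $x$ to $Y$ correspond bijectively to one-to-one $t$-disjoint path covers of $G'$ between $x$ and $z$, since each $x$--$z$ path must enter $z$ through a distinct $y_i$. In $G'$ we have $|G'|=n+1$ and $d_{G'}(u)+d_{G'}(v)\ge n+t-1=|G'|+t-2$, which is exactly the hypothesis of Lemma \ref{lem1.1}. The difficulty is that Lemma \ref{lem1.1} is stated for the global property, whereas I need it for the single pair $\{x,z\}$. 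I would therefore check that the argument behind Lemma \ref{lem1.1} is pairwise; the usual closure arguments are. If it is, the result follows at once.

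If the reduction cannot be used verbatim, I would do the rotation argument directly. Deleting $uv$ splits $P_1$ into $P_1[x,u]$ and $P_1[v,y_1]$. I look for a vertex $w\notin\{u,v\}$ with $uw\in E(G)$ and $vw^+\in E(G)$, or symmetric variants using $w^-$, and then reroute. Several cases arise. If $w$ lies on $P_1[v,y_1]$, take $x\to u \to w$ backwards to $v$, then $v\to w^+\to y_1$. If $w$ lies on $P_1[x,u]$, use the mirror pattern with predecessors. If $w$ lies on some $P_j$ with $j\ne1$, exchange tails: $P_1[x,u]\,w\,P_j[w^+,y_j]$ together with $P_j[x,w]$ reversed is not valid. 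The correct exchange is $P_j[x,w]\,u\,P_1[u,x]$, which is not allowed either, so the right form is the tail swap $x\cdots u\, w\cdots$ combined with $x\cdots w^-$. I expect to need the two patterns "$u\sim w$, $v\sim w^+$" for the same path and "$u\sim w^+$, $v\sim w$" for tail swaps on other paths. Each successful pattern yields $t$ paths from $x$ to the $t$ sinks, with possibly a permuted assignment of sinks, which is allowed.

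The main obstacle is the counting step showing that such a $w$ exists. I would define $S_u=\{w: uw\in E(G)\}$ and let $T_v$ be the set of predecessors $w^-$ (respectively successors) of the neighbours of $v$, chosen per path according to which pattern applies. Both sets lie in $V(G)\setminus\{u\}$. The set $T_v$ omits the $t$ sink endpoints $y_i$ and suitably handles $x$, which is the common start of all paths and loses neighbours $x^+$ on $t-1$ paths, up to collisions at $x$. This should give $|S_u|+|T_v|\ge d(u)+d(v)\ge n+t-1$ while $|S_u\cup T_v|\le n-1$ minus losses, so $|S_u\cap T_v|\ge1$ by pigeonhole. The delicate part is the accounting at $x$, which may be adjacent to $v$ with $t$ distinct successors, and at the sinks, since the $+t$ in the hypothesis must absorb exactly these boundary losses. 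I would verify this carefully and also treat the degenerate cases in which $u$ or $v$ equals $x$ or some $y_i$.
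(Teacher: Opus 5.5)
The paper gives no proof of this lemma; it is quoted from Lin et al. So your argument has to stand on its own. Several ingredients are right: the trivial direction, and the bijection between one-to-many covers of $G$ from $x$ to $Y$ and $x$--$z$ one-to-one covers of $G'$. After the false starts, your exchange patterns are also right. On $P_1$ the pattern is $u\sim w$, $v\sim w^+$, and it works on both sides of $uv$, so no separate mirror case is needed. On $P_j$, $j\ne 1$, it is the tail swap with $u\sim w^+$, $v\sim w$.

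But the proposal stops at the one step that carries the content, so there is a genuine gap. Route 1 is not a shortcut. Lemma~\ref{lem1.1} as stated needs $G'+uv$ to be one-to-one $t$-disjoint path coverable for \emph{every} pair, whereas you only know a cover for the single pair $(x,z)$. A pairwise version is true, but proving it is the same rotation-and-count. There the loss sits at $z$: successors on the $t-1$ paths other than $P_1$ may all equal $z$, costing $t-2$, which matches $|G'|+t-2=n+t-1$.

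In route 2, your sketched bookkeeping would not close. The claim $|S_u|+|T_v|\ge d(u)+d(v)$ is false; take $v$ adjacent to $y_2,\dots,y_t$ but not to $x$. The losses are also misattributed:
\begin{itemize}
\item $x$ is a gain, not a loss.
\item As a set, $T_v$ may well contain sinks $y_j$ with $j\ne 1$.
\item There is no general bound on $|S_u\cup T_v|$ better than $n-1$, since $S_u$ may contain $x$ and every sink.
\end{itemize}

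What is missing is the following count. Orient each $P_i$ from $x$ to $y_i$, with $v=u^+$ on $P_1$. To each $w'\in N_G(v)$ assign:
\begin{itemize}
\item its predecessor on $P_1$ if $w'\in V(P_1)\setminus\{x\}$;
\item its successor on $P_j$ if $w'\in V(P_j)\setminus\{x,y_j\}$ with $j\ne 1$, using the tail swap $P_1[x,u]\,w\,P_j[w,y_j]$ and $P_j[x,w']\,v\,P_1[v,y_1]$;
\item all $t-1$ vertices $x^+_{P_j}$, $j\ne 1$, if $w'=x$, in which case the new $P_j$ is $x\,v\,P_1[v,y_1]$;
\item nothing if $w'\in\{y_2,\dots,y_t\}$, since a sink cannot become an interior vertex.
\end{itemize}
Predecessors on $P_1$ are distinct, and successors on the $P_j$ (including those of $x$) lie in the pairwise disjoint sets $V(P_j)\setminus\{x\}$. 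So the assignment is injective, there are no collisions at $x$, and $|T_v|\ge d_G(v)-(t-1)$. Since $u\notin S_u\cup T_v$, we have $|S_u\cup T_v|\le n-1$, hence $|S_u\cap T_v|\ge d_G(u)+d_G(v)-(t-1)-(n-1)\ge 1$. Any $w$ in the intersection yields a one-to-many cover of $G$ from $x$ to $Y$ that avoids $uv$. The degenerate cases go through verbatim: $u=x$ (then $x\notin N_G(v)$) and $v=y_1$ are the only ones possible. This inequality is exactly where the hypothesis $n+t-1$ is used, and your proof needs to contain it rather than defer it.
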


\begin{lem}\label{lem3.1} \emph{\rm{(Lin et al. \cite{a19})}}
	Let $G$ be a graph with $n\geq 2t\geq 2$ vertices. Suppose that there exist two nonadjacent vertices $u$ and $v$ with $d_G(u)+d_G(v)\geq n+t$. Then $G$ is many-to-many $t$-disjoint path coverable if and only if $G+uv$ is many-to-many $t$-disjoint path coverable.
\end{lem}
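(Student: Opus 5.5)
The implication from $G$ to $G+uv$ is immediate: a many-to-many $t$-disjoint path cover of $G$ for given terminal sets $U,V$ is also one in $G+uv$. So the work is the converse. I plan a closure argument in the style of Ore/Bondy--Chv\'atal.

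\textbf{Setup.} Fix disjoint terminal sets $U=\{u_1,\dots,u_t\}$ and $V=\{v_1,\dots,v_t\}$ in $G$, and take a many-to-many $t$-disjoint path cover $P_1,\dots,P_t$ of $G+uv$. If no path uses $uv$, we are done. Otherwise assume $uv\in E(P_1)$. Orient every $P_j$ from its $U$-end to its $V$-end, relabelling so that $P_j$ runs from $u_j$ to $v_j$. By symmetry, $u$ precedes $v$ on $P_1$. Deleting $uv$ yields $t+1$ vertex-disjoint oriented paths in $G$ covering $V(G)$:
\[
Q=u_1\cdots u,\qquad R=v\cdots v_1,\qquad P_2,\dots,P_t .
\]
These paths contain $n-(t+1)$ consecutive pairs $(x,x^+)$. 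The aim is to find one pair at which the $t+1$ pieces can be rejoined into $t$ paths of $G$ with the same endpoint pairing, up to a permutation, which is all the definition requires.

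\textbf{Rerouting rules.} Since $u$ is the last vertex of $Q$ and $v$ the first of $R$, three patterns arise.
\begin{itemize}
\item[(a)] If $x,x^+$ lie on some $P_j=a\cdots b$ with $j\ge 2$, and $u\sim x^+$, $v\sim x$, use the paths $u_1\cdots u\,x^+\cdots b$ and $a\cdots x\,v\cdots v_1$. These join $u_1$ to $b$ and $a$ to $v_1$, which is a legal re-permutation.
\item[(b)] If $x=w_i$, $x^+=w_{i+1}$ on $R=w_1\cdots w_m$, and $u\sim w_i$, $v\sim w_{i+1}$, use $u_1\cdots u\,w_i w_{i-1}\cdots w_1\,w_{i+1}\cdots w_m$.
\item[(c)] If $x=y_i$, $x^+=y_{i+1}$ on $Q=y_1\cdots y_k$, and $u\sim y_i$, $v\sim y_{i+1}$, use $u_1\cdots y_i\,u\,y_{k-1}\cdots y_{i+1}\,v\cdots v_1$.
\end{itemize}
In each case, every rule uses one $u$-adjacency and one $v$-adjacency attached to the two ends of a consecutive pair.

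\textbf{Counting.} Suppose no rule applies. Map each neighbour of $u$ and each neighbour of $v$ to a consecutive pair, according to the role it plays in (a)--(c):
\begin{itemize}
\item on $P_j$ and $Q$, a $u$-neighbour $x^+$ goes to $(x,x^+)$ and a $v$-neighbour $x$ goes to $(x,x^+)$, with the indices adjusted on $Q$ as in (c);
\item on $R$, a $u$-neighbour $w_i$ goes to $(w_i,w_{i+1})$ and a $v$-neighbour $w_{i+1}$ goes to $(w_i,w_{i+1})$.
\end{itemize}
Each map is injective, and failure of all rules means no pair receives both a $u$-neighbour and a $v$-neighbour. Hence the mapped neighbours number at most $n-t-1$.

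\textbf{Main obstacle.} The step I expect to be hardest is bounding the unmapped neighbours. These are the boundary vertices that have no partner in the required direction:
\begin{itemize}
\item the first vertices $u_2,\dots,u_t$ of $P_2,\dots,P_t$, as possible $u$-neighbours;
\item the last vertices $v_2,\dots,v_t$, as possible $v$-neighbours;
\item the end $v_1$ of $R$, and the vertices $u$ and $v$ themselves, which are not neighbours of each other in $G$.
\end{itemize}
The target is to show these contribute at most $2t$ in total, giving
\[
d_G(u)+d_G(v)\le (n-t-1)+2t=n+t-1,
\]
which contradicts $d_G(u)+d_G(v)\ge n+t$. Getting the bookkeeping exactly right needs care, especially for degenerate cases:
\begin{itemize}
\item $u=u_1$ or $v=v_1$, so that $Q$ or $R$ is a single vertex;
\item some $P_j$ consisting of a single edge.
\end{itemize}
The hypothesis $n\ge 2t$ guarantees the terminal sets exist and keeps these cases consistent. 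If the count comes out one too large, I would refine it by noting that $u$ and $v$ can only be adjacent to $u_1$ and $v_1$ through the path structure, which should recover the missing unit.
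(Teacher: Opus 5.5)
The paper gives no proof of this lemma; it quotes it from Lin et al.\ and uses it as a black box, so there is no internal argument to compare with. Your rerouting-and-counting argument is a correct, self-contained proof in the Ore/Bondy--Chv\'atal closure style. It makes the lemma checkable from first principles rather than resting on the citation.

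The three switches (a)--(c) are valid. In (b) the adjacency condition forces $i\ge 2$, and in (c) it forces $i\le k-2$, both because $uv\notin E(G)$; so the reversed segments are nonempty and no vertex repeats. The $u$-map and the $v$-map are each injective. If no rule applies, each of the $n-t-1$ consecutive pairs therefore absorbs at most one neighbour in total.

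There is one bookkeeping slip in your list of unmapped vertices. You omitted the first vertex $u_1$ of $Q$: it has no predecessor, so it cannot be mapped when it is a $v$-neighbour. The complete list is
\[
u_2,\dots,u_t,\ v_1 \ \text{on the $u$-side}, \qquad v_2,\dots,v_t,\ u_1 \ \text{on the $v$-side}.
\]
That is at most $t$ on each side, exactly $2t$ in total, which meets your target and gives $d_G(u)+d_G(v)\le (n-t-1)+2t=n+t-1$. The contradiction goes through, and the refinement you held in reserve is not needed.

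The degenerate cases cost nothing. If $Q$ (resp.\ $R$) is a single vertex, the would-be unmapped boundary vertex is $u$ (resp.\ $v$) itself, which is not a neighbour of the other. Every $P_j$ has at least two vertices because $U\cap V=\emptyset$, so rule (a) and its count apply even when $P_j$ is a single edge.
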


\begin{lem}\label{lem1}
	Let $n,t,x,s$ be non-negative integers with $x\geq s-1\geq 1$. For $i\in \{0,1,2\}$ and $t\geq 2$ with $t-i+1\leq x\leq \left\lfloor \frac{n+t-i-1}{2}\right\rfloor$, the function
	$$\lambda_i(n,t,s,x)=(x-t+i-1)\binom{x}{s-1}+\binom{n+t-x-i}{s}$$
	is convex.
\end{lem}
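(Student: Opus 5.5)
Convexity here means convexity in the integer variable $x$ on the stated range, i.e. nonnegative second differences. I will show that the first difference $\Delta(x)=\lambda_i(n,t,s,x+1)-\lambda_i(n,t,s,x)$ is nondecreasing in $x$. The point of this in the later proofs is that $\lambda_i$ then attains its maximum over an interval of $x$ at an endpoint. Write $a=t-i+1$ and $N=n+t-i$, so that
$$\lambda_i(x)=(x-a)\binom{x}{s-1}+\binom{N-x}{s}.$$
I treat the two summands separately and show each has a nondecreasing first difference; the sum of two convex sequences is convex.

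For the second summand, Pascal's rule gives
$$\binom{N-x-1}{s}-\binom{N-x}{s}=-\binom{N-x-1}{s-1}.$$
As $x$ increases, $N-x-1$ decreases, so $\binom{N-x-1}{s-1}$ is nonincreasing, provided $N-x-1\ge 0$. Hence the negative of it is nondecreasing. The range condition $x\le \lfloor (N-1)/2\rfloor$ gives $N-x-1\ge x\ge s-1$, so all binomials stay in the regime where they are monotone.

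For the first summand $p(x)=(x-a)\binom{x}{s-1}$, I compute
$$p(x+1)-p(x)=(x+1-a)\binom{x+1}{s-1}-(x-a)\binom{x}{s-1}=\binom{x}{s-1}+(x+1-a)\binom{x}{s-2}.$$
Both pieces are products or sums of nonnegative nondecreasing functions of $x$: we have $x\ge s-1$, and $x+1-a=x-t+i\ge 1>0$ because $x\ge t-i+1$. So the difference is nondecreasing, and the case $s=2$, where $\binom{x}{0}=1$, is handled by the same formula. Adding the two monotone differences shows $\Delta(x)$ is nondecreasing, which is the claimed convexity.

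The only delicate step is keeping track of the domain. The second differences must be taken at $x-1,x,x+1$, which might stray outside $[t-i+1,\lfloor (N-1)/2\rfloor]$. I would either state convexity as $\Delta$ being nondecreasing on the interior of the range, or note that the formulas remain valid (nonnegative arguments) one step beyond each endpoint. Either way, the consequence actually used later is
$$\max_{t-i+1\le x\le h}\lambda_i(x)=\max\{\lambda_i(t-i+1),\lambda_i(h)\},$$
and this follows directly from the monotonicity of $\Delta$.
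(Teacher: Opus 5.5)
Your proof is correct, but it takes a different route from the paper's.

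\textbf{The paper's route.} The paper treats $x$ as a real variable and extends $\binom{z}{k}$ to the polynomial $z(z-1)\cdots(z-k+1)/k!$. It then uses $\frac{\mathrm{d}}{\mathrm{d}z}\binom{z}{k}=\binom{z}{k}\sum_{j=0}^{k-1}\frac{1}{z-j}$ to show that both $(x-t+i-1)\binom{x}{s-1}$ and $\binom{n+t-x-i}{s}$ have positive second derivative. The sign comes, implicitly, from $(\sum_j a_j)^2-\sum_j a_j^2=2\sum_{j<l}a_ja_l$ with every $a_j=1/(x-j)$ positive. This is exactly where the hypothesis $x\ge s-1$ is used.

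\textbf{Your route.} You work with finite differences instead. Pascal's rule reduces the two first differences to $-\binom{N-x-1}{s-1}$ and $\binom{x}{s-1}+(x+1-a)\binom{x}{s-2}$. Monotonicity is then immediate, because binomial coefficients are nonnegative and nondecreasing in the upper index.

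\textbf{What each buys.} The paper's argument gives strict convexity of the real extension, which is more than is needed. Yours is more elementary and proves precisely the integer statement used later: the maximum of $\lambda_i$ over an integer interval is attained at an endpoint.

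Your argument also never actually needs $x\ge s-1$; your appeal to it for nonnegativity is superfluous. For integers $m\ge 0$, the coefficient $\binom{m}{k}$ is nonnegative and nondecreasing in $m$ even when $m<k$. The only range conditions you use are $x\ge t-i+1$, which gives $x+1-a\ge 0$, and the upper bound on $x$, which gives $N-x-1\ge 0$.

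This is a real advantage. In Subcase 2.1 of the main proofs, the maximum is taken over $\delta\le x\le h$. When $\delta<s-1$, the lemma as stated does not literally cover that part of the interval, and neither does its proof via real derivatives. Your discrete version does cover it.
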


\noindent\textbf{Proof:}
For $i\in \{0,1,2\}$ and $t\geq 2$ with $t-i+1\leq x\leq \left\lfloor \frac{n+t-i-1}{2}\right\rfloor$, recall that for the combinatorial function $\binom{z}{k}$ (extended to real $z$), its first and second derivatives are
$$
\frac{\mathrm{d}}{\mathrm{d}z}\binom{z}{k} = \binom{z}{k} \sum_{j=0}^{k-1} \frac{1}{z-j}
$$
and
$$
\frac{\mathrm{d}^2}{\mathrm{d}z^2}\binom{z}{k} = \binom{z}{k} \left[ \left( \sum_{j=0}^{k-1} \frac{1}{z-j} \right)^2 - \sum_{j=0}^{k-1} \frac{1}{(z-j)^2} \right].
$$

First, consider $f(x)=(x - t + i - 1)\binom{x}{s - 1}$. Then
\begin{equation*}
		\frac{\mathrm{d}^2 f(x)}{\mathrm{d}x^2} = \binom{x}{s-1} \left[ (x-t+i-1) \left( \left( \sum_{j=0}^{s-2} \frac{1}{x-j} \right)^2 - \sum_{j=0}^{s-2} \frac{1}{(x-j)^2} \right) + 2\sum_{j=0}^{s-2} \frac{1}{x-j} \right] > 0.
\end{equation*}

Next, let $g(x)=\binom{n + t - x - i}{s}$ and set $y = n + t - i - x$. Then
$$
\frac{\mathrm{d}^2 g(x)}{\mathrm{d}x^2} = \binom{y}{s} \left[ \left( \sum_{j=0}^{s-1} \frac{1}{y-j} \right)^2 - \sum_{j=0}^{s-1} \frac{1}{(y-j)^2} \right] > 0.
$$

Since the second derivative of $\lambda_i$ is
$$
\frac{\mathrm{d}^2\lambda_i}{\mathrm{d}x^2} = \frac{\mathrm{d}^2 f}{\mathrm{d}x^2} + \frac{\mathrm{d}^2 g}{\mathrm{d}x^2} > 0,
$$
the function $\lambda_i(n,t,s,x)$ is convex. \qed

\begin{lem}\label{lem0} \emph{\rm{(Wu, Xue and Kang \cite{a29})}}
	Let $G$ be a simple connected graph of order $n$ and minimum degree $\delta$ with $m$ edges and $0\leq \alpha< 1$. Then
	$$
	\lambda_\alpha(G)\leq f_\alpha(n,\delta,m).
	$$
\end{lem}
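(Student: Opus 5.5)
This is Lemma \ref{lem0}, the bound $\lambda_\alpha(G)\le f_\alpha(n,\delta,m)$. I would prove it by the standard row-sum argument on a suitable power of $A_\alpha(G)$. Write $A=A_\alpha(G)$ and let $\mathbf{x}$ be the positive Perron eigenvector of $A$ for $\lambda=\lambda_\alpha(G)$. By Perron--Frobenius, $\lambda$ is at most the largest row sum of any nonnegative matrix polynomial $p(A)$ whose coefficients make $p(\lambda)$ equal to that row sum. The plan is to find a quadratic $p(A)=A^2-bA$ whose row sums are bounded by a constant $c$. Then $\lambda^2-b\lambda\le c$, so $\lambda\le \frac{b+\sqrt{b^2+4c}}{2}$. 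Matching this with the formula for $f_\alpha$, the target is
\[
b=\delta+\alpha n-\alpha\delta-1, \qquad c=(1-\alpha)(2m-n\delta+\delta).
\]

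The first step is to compute the $v$-th row sum of $A^2$. The row sum of $A$ at $v$ is $\alpha d_v+(1-\alpha)d_v=d_v$. Hence
\[
(A^2\mathbf{1})_v=\sum_u A_{vu}d_u=\alpha d_v^2+(1-\alpha)\sum_{u\sim v}d_u.
\]
The second step is the classical estimate used for Hong--Shu--Fang type bounds:
\[
\sum_{u\sim v}d_u\le 2m-d_v-(n-1-d_v)\delta,
\]
which holds because the vertices other than $v$ and outside $N(v)$ each have degree at least $\delta$. Substituting gives
\[
(A^2\mathbf{1})_v\le \alpha d_v^2+(1-\alpha)\bigl(2m-(n-1)\delta\bigr)+(1-\alpha)(\delta-1)d_v .
\]
Subtracting $b\,d_v$ with $b=\delta-1+\alpha(n-\delta)$ leaves
\[
\alpha d_v^2+(1-\alpha)(\delta-1)d_v-(\delta-1)d_v-\alpha(n-\delta)d_v+(1-\alpha)(2m-n\delta+\delta)=\alpha d_v\bigl(d_v-(n-1)\bigr)+c-\text{(a correction term)}.
\]
I would expand this carefully. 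The terms in $d_v$ should collapse to $\alpha d_v(d_v-n+1)\le 0$, possibly after adjusting where $\delta$ enters, which yields $((A^2-bA)\mathbf{1})_v\le c$ for every $v$.

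The third step converts this into the eigenvalue bound. Instead of using row sums of the matrix directly, I would work with the eigenvector: take $v$ with $x_v$ maximal and use
\[
(\lambda^2-b\lambda)x_v=\sum_u (A^2-bA)_{vu}x_u .
\]
The entries $(A^2-bA)_{vu}$ for $u\neq v$ may be negative when $b>$ the corresponding entry of $A^2$. To handle this, I would either note that $b\ge\alpha\delta$ keeps the relevant off-diagonal contributions controlled, or use the row-sum comparison only after checking the sign. If this fails, the fallback is to bound $\lambda x_v$ and $\lambda^2x_v$ separately, using $x_u\le x_v$ only inside the nonnegative sums. I expect this sign issue, together with getting the algebra to match exactly the stated $b$ and $c$ including the $\alpha n$ term, to be the main obstacle.

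Finally, solving the quadratic inequality $\lambda^2-b\lambda-c\le 0$ and taking the larger root gives $\lambda_\alpha(G)\le f_\alpha(n,\delta,m)$.
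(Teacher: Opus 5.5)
Your row-sum computation is correct, and it closes exactly. With $b=\delta-1+\alpha(n-\delta)$ and $c=(1-\alpha)(2m-n\delta+\delta)$ you get $((A^2-bA)\mathbf{1})_v\le \alpha d_v(d_v-n+1)+c\le c$, with no correction term. The paper does not prove this lemma; it only quotes Wu, Xue and Kang.

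The genuine gap is your third step: passing from this row-sum bound to $\lambda^2-b\lambda\le c$. You leave it unresolved, and neither of your suggested fixes works.

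\begin{itemize}
\item With your notation $A=A_\alpha(G)$, the matrix $B=A^2-bA$ is not nonnegative. For $u\sim v$ one has $B_{vu}=(1-\alpha)\bigl[\alpha(d_u+d_v)+(1-\alpha)|N(u)\cap N(v)|-b\bigr]$. For $\alpha=0$ and a triangle-free graph with $\delta\ge 2$ this equals $-(\delta-1)<0$.
\item So the Perron--Frobenius row-sum bound in your first paragraph does not apply. At a vertex $v$ with $x_v$ maximal, the negative entries give $B_{vu}x_u\ge B_{vu}x_v$, which is the wrong direction.
\item Your fallback fails as well. At such a $v$ you have $\lambda x_v=\alpha d_vx_v+(1-\alpha)\sum_{u\sim v}x_u\le d_vx_v$, i.e.\ $\lambda\le d_v$. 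Hence $-b\lambda x_v\ge -bd_vx_v$, again the wrong direction for an upper bound.
\end{itemize}

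The missing idea is to average over all vertices instead of picking one. $B$ is symmetric, and the Perron vector $x$ is positive (or at least nonnegative and nonzero). Therefore
\[
(\lambda^2-b\lambda)\sum_v x_v=\mathbf{1}^{T}Bx=x^{T}B\mathbf{1}=\sum_v x_v\,(B\mathbf{1})_v\le c\sum_v x_v .
\]
So $\lambda^2-b\lambda$ is a convex combination of the row sums of $B$, and hence at most $c$. Replace your third step with this line. The rest of your argument, solving $\lambda^2-b\lambda-c\le 0$ and taking the larger root, then gives $\lambda_\alpha(G)\le f_\alpha(n,\delta,m)$.
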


To prove our results, we also need a definition from Kopylov \cite{a37}.

\begin{df}\emph{\rm{($h$-disintegration of a graph, Kopylov \cite{a37})}}
	Let $G$ be a graph and $h$ be a positive integer. Delete all vertices of degree at most $h$ from $G$; for the resulting graph $G'$, we again delete all vertices of degree at most $h$ from $G'$. Iterating this process until we finally obtain a graph, denoted by $D(G;h)$, such that either $D(G;h)$ is a null graph or $\delta(D(G;h))\geq h+1$. The graph $D(G;h)$ is called the $(h+1)$-core of $G$.
\end{df}

\section{ Proofs of main results }\label{sec4}

\subsection{Proof of Theorem \ref{thm1.1}}
\noindent\textbf{Proof:}
By contradiction, suppose that the graph $G$ of order $n$ with minimum degree $\delta$ is not one-to-one $t$-disjoint path coverable. Then $G$ is not a complete graph. Otherwise, $\delta(G)=n-1\geq \frac{n+t-2}{2}$. By Theorem \ref{thm0}(i), the graph $G$ is one-to-one $t$-disjoint path coverable, a contradiction.

Let $w$ be a vertex of $G$ with $d_G(w)=\delta$. Suppose that the induced subgraph $G[V(G)\backslash \{w\}]=K_{n-1}$. Then, $\sigma_2(G)=\delta+n-2\geq n+t-2$. By Lemma \ref{lem1.1}, the graph $G$ is one-to-one $t$-disjoint path coverable, a contradiction. Thus $G[V(G)\backslash \{w\}]\neq K_{n-1}$.

If there exist two vertices $u,v\in V(G)\backslash \{w\}$ such that $uv\notin E(G)$ and $d_G(u)+d_G(v)\geq n+t-2$, we denote by $G_1=G+uv$. Then $G_1[V(G_1)\backslash\{w\}]\neq K_{n-1}$. Otherwise, $\sigma_2(G_1)=\delta+n-2\geq n+t-2$, then by Lemma \ref{lem1.1} the graph $G$ is one-to-one $t$-disjoint path coverable, a contradiction. Iterating this process until no such pair of vertices remains. We finally obtain a graph, denoted by $Q$.

If the graph $Q$ is a complete graph, then by Lemma \ref{lem1.1}, the graph $G$ is one-to-one $t$-disjoint path coverable, a contradiction. Thus, we assume that the graph $Q$ is not a complete graph. For any $x,y\in V(Q)\backslash \{w\}$ with $xy\notin E(G)$, $d_Q(x)+d_Q(y)\leq n+t-3$. Since edges are only added in $G[V(G)\backslash\{w\}]$, $\delta(Q)=d_G(w)=\delta$. By Lemma \ref{lem1.1}, the graph $Q$ is not one-to-one $t$-disjoint path coverable. Let $h=\left\lfloor \frac{n+t-3}{2} \right\rfloor$, $D=D(Q;h)$ and $d=|D|$. We distinguish the following two cases.

\par {\bfseries Case 1:}\ $d=0$.

In the $h$-disintegration process, let $Q_0=Q$ and $Q_{i+1}=Q_i-x_i$, $0\leq i\leq n-1$, where $x_i$ is a vertex with $d_{Q_{i}}(x_i)\leq h$. Then $\delta(Q)\leq h$; otherwise no vertex could be deleted. Without loss of generality, we choose $w$ to be $x_0$, the vertex with minimum degree in $Q$.

By the definition of $h$-disintegration, $d_{Q_i}(x_i)\leq h$, $0\leq i\leq n-h-1$. Then,
\begin{equation*}
	\begin{split}
		N_s(Q)\leq \binom{\delta}{s-1}+(n-h-1)\binom{h}{s-1}+\binom{h}{s}.
	\end{split}
\end{equation*}

We distinguish the following two subcases.

\par {\bfseries Subcase 1.1:}\ $n+t$ is odd.

Since $n+t\geq 5$, we define $n+t=2k+1$ for some integer $k\geq 2$. Since $h=\left\lfloor \frac{n+t-3}{2} \right\rfloor=k-1$,
\begin{equation*}
\begin{split}
    N_s(Q)-g_2(n,t,\delta,s)\leq&(n-k)\binom{k-1}{s-1}+\binom{k-1}{s}-(k-t)\binom{k-1}{s-1}-\binom{k}{s}\\
		=&\binom{k-1}{s-1}+\binom{k-1}{s}-\binom{k}{s}=0.\\
\end{split}	
\end{equation*}

\par {\bfseries Subcase 1.2:}\ $n+t$ is even.

Since $n+t\geq 5$, we define $n+t=2k$ for some integer $k\geq 3$. Since $h=\left\lfloor \frac{n+t-3}{2} \right\rfloor=k-2$,
\begin{equation*}
	\begin{split}
		N_s(Q)-g_2(n,t,\delta,s)\leq&(n-k+1)\binom{k-2}{s-1}+\binom{k-2}{s}-(k-t-1)\binom{k-2}{s-1}-\binom{k}{s}\\
			=&2\binom{k-2}{s-1}+\binom{k-2}{s}-\binom{k}{s}=\binom{k-2}{s-1}-\binom{k-1}{s-1}\leq 0.\\			
	\end{split}	
\end{equation*}

Then, $\max\{f_2(n,t,\delta,s), g_2(n,t,\delta,s)\}<N_s(G)\leq N_s(Q)\leq g_2(n,t,\delta,s)$, a contradiction.
			
\par {\bfseries Case 2:}\ $d\neq 0$.
			
\begin{cla}\label{cla1.1}
	$D$ is a complete graph.
\end{cla}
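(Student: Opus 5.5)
The plan is to show that any two vertices of $D$ are adjacent in $Q$, using the closure property of $Q$. Recall that $D=D(Q;h)$ with $h=\left\lfloor \frac{n+t-3}{2}\right\rfloor$. Since $D$ is nonempty, every vertex $x\in V(D)$ satisfies $d_Q(x)\geq d_D(x)\geq h+1$. Take two distinct vertices $x,y\in V(D)$. Then
$$d_Q(x)+d_Q(y)\geq 2h+2\geq (n+t-4)+2=n+t-2,$$
where we used $2h\geq n+t-4$.

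Next I would rule out that $w$ lies in $D$. The vertex $w$ has $d_Q(w)=\delta\leq h$, by the hypothesis $\delta\leq \left\lfloor \frac{n+t-3}{2}\right\rfloor=h$. So $w$ is deleted in the first round of the $h$-disintegration, and $w\notin V(D)$. Hence $x,y\in V(Q)\backslash\{w\}$.

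By construction of $Q$, every nonadjacent pair $x,y$ in $V(Q)\backslash\{w\}$ has $d_Q(x)+d_Q(y)\leq n+t-3$. This contradicts the inequality above. Therefore $xy\in E(Q)$, and since $x,y$ were arbitrary, $D$ is a clique, i.e.\ a complete graph.

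The only delicate point is the parity check $2h+2\geq n+t-2$. When $n+t$ is odd, $2h=n+t-3$, so $2h+2=n+t-1$. When $n+t$ is even, $2h=n+t-4$, so $2h+2=n+t-2$. The inequality holds in both cases, which is exactly the threshold required for the stopping condition of the closure defining $Q$.
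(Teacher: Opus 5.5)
Your proof is correct and is essentially the paper's argument: vertices of $D$ have degree at least $h+1$, so any two of them have degree sum at least $n+t-2$, which contradicts the stopping condition of the closure that produced $Q$ for a nonadjacent pair. You also make explicit two points the paper leaves implicit: that $w\notin V(D)$, because $d_Q(w)=\delta\le h$, which is needed since the closure condition only covers pairs avoiding $w$; and the parity check $2h+2\ge n+t-2$.
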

\noindent\textbf{Proof:} We establish the claim by contradiction. Suppose that there exist two vertices $u,v\in V(D)$ such that $uv\notin E(D)$. By the definition of $h$-disintegration, $d_D(u),d_D(v)\geq h+1$. Since $uv\notin E(D)$, $n+t-2\leq d_D(u)+d_D(v)\leq d_Q(u)+d_Q(v)\leq n+t-3$, a contradiction. Thus, the graph $D$ is a complete graph. $\hfill\Box$

\begin{cla}\label{cla1.2}
	$\delta\leq n+t-d-2$.
\end{cla}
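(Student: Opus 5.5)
The plan is to argue by contradiction: assume $d\geq n+t-\delta-1$ and show that $Q-w$ is then complete. That is impossible, because the iteration producing $Q$ keeps $Q[V(Q)\backslash\{w\}]\neq K_{n-1}$ at every step. The tools are the closure property of $Q$, namely $d_Q(x)+d_Q(y)\leq n+t-3$ for nonadjacent $x,y\in V(Q)\backslash\{w\}$, together with the fact that $D=D(Q;h)$ is the unique maximal induced subgraph of $Q$ with minimum degree at least $h+1$.

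\textbf{Step 1: every vertex outside $D$ other than $w$ sees all of $D$.} First, $w\notin V(D)$. Indeed, $d_Q(w)=\delta\leq h$, while every vertex of $D$ has degree at least $h+1$ even inside $D$. By Claim \ref{cla1.1}, $D$ is complete, so each $y\in V(D)$ satisfies $d_Q(y)\geq d-1$. Now take $x\in V(Q)\backslash (V(D)\cup\{w\})$ and $y\in V(D)$ with $xy\notin E(Q)$. Both lie in $V(Q)\backslash\{w\}$, and
$$d_Q(x)+d_Q(y)\geq \delta+d-1\geq n+t-2.$$
This contradicts the closure property of $Q$. Hence every such $x$ is adjacent to all $d$ vertices of $D$.

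\textbf{Step 2: maximality of the core forces $V(Q)\backslash V(D)\subseteq\{w\}$.} Set $S=V(Q)\backslash\{w\}$. Every vertex of $D$ has degree at least $h+1$ in $Q[S]$, since this already holds inside $D$. By Step 1, every vertex of $S\backslash V(D)$ has at least $d$ neighbours in $Q[S]$. Since $D$ is complete with minimum degree at least $h+1$, we have $d\geq h+2$, so these vertices also have degree at least $h+1$. Thus $\delta(Q[S])\geq h+1$. The $h$-disintegration never deletes a vertex of a subgraph whose minimum degree exceeds $h$: by induction, no vertex of $S$ is ever removed. Therefore $S\subseteq V(D)$.

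\textbf{Step 3: conclude.} By Step 2, $Q[V(Q)\backslash\{w\}]$ is contained in the complete graph $D$, so $Q[V(Q)\backslash\{w\}]=K_{n-1}$. This contradicts the construction of $Q$. (Alternatively, $\sigma_2(Q)\geq \delta+n-2\geq n+t-2$, and Lemma \ref{lem1.1} then makes $G$ one-to-one $t$-disjoint path coverable.) Hence $d\leq n+t-\delta-2$, which is the claim.

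The step needing the most care is Step 1. The closure condition only controls pairs avoiding $w$, so one must check that $y\neq w$, which holds because $w\notin V(D)$. The core-maximality argument in Step 2 is then routine.
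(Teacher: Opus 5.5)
Your argument is correct and is essentially the paper's. Both proofs combine the same three ingredients: the closure bound $d_Q(x)+d_Q(y)\leq n+t-3$ for nonadjacent pairs avoiding $w$, maximality of the $(h+1)$-core, and the invariant $Q[V(Q)\backslash\{w\}]\neq K_{n-1}$; the paper runs them in the opposite order (core maximality first, to give each vertex outside $D$ a non-neighbour in $D$, then a case split on whether $V(Q)\backslash V(D)=\{w\}$), and in the case where only $w$ lies outside $D$ you finish more cleanly by contradicting the construction of $Q$, whereas the paper applies the closure bound to a pair containing $w$.
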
			
\noindent\textbf{Proof:} By contradiction, suppose that $\delta \geq n+t-d-1$. Then $d_D(u)=d-1\geq n+t-\delta-2$ for all $u\in V(D)$. Since the graph $D$ is a complete graph and $d_D(u)\geq h+1$ for all $u\in V(D)$, $d\geq h+2$. Then, for any vertex $v\in V(Q)\backslash V(D)$, there exist at least two vertices in $D$, which are not adjacent to $v$ in $Q$. Let $x\in V(Q)\backslash V(D)$, we assume that $y\in V(D)$ and $xy\notin E(Q)$. Note that $w\in V(Q)\backslash V(D)$. We distinguish two cases.
			
Suppose first that $V(Q)\backslash V(D)=\{w\}$. Then $x=w$ and $|D|=n-1$. If there exists a vertex $v\in V(D)$ with $vw\notin E(Q)$, then $n+t-2\leq n-2+\delta \leq d_Q(v)+d_Q(w)\leq n+t-3$, a contradiction. Then, the graph $Q$ is a complete graph, a contradiction.
			
Suppose next that $V(Q)\backslash V(D)\neq \{w\}$. Then there exists a vertex $x\in V(Q)\backslash (V(D)\cup \{w\})$. Since $d_Q(x)\geq \delta$ and $y\in V(D)$, $d_Q(x)+d_Q(y)\geq \delta+n+t-\delta-2=n+t-2$. Since $xy\notin E(Q)$, it follows that $n+t-2\leq d_Q(x)+d_Q(y)\leq n+t-3$, a contradiction. Then, $\delta\leq n+t-d-2$.     $\hfill\Box$
			
Let $D'$ be the $(n+t-d-1)$-core of $Q$. Since $d\geq h+2$, $n+t-d-2\leq n+t-h-4\leq h$. Therefore, $D\subseteq D'$. We distinguish the following two subcases.
			
\par {\bfseries Subcase 2.1:}\ $D'=D$.

Since $D'=D$, $|D'|=|D|=d$. Then, by the definition of $(n+t-d-2)$-disintegration,
\begin{equation*}
	\begin{split}
		N_s(Q)\leq&\binom{\delta}{s-1}+(n-d-1)\binom{n+t-d-2}{s-1}+\binom{d}{s}\\
				=&\binom{\delta}{s-1}+\lambda_2(n,t,s,n+t-d-2).\\
	\end{split}	
\end{equation*}

Suppose first that $n+t-d-2\leq s-2$. By Claim \ref{cla1.2}, $\delta\leq n+t-d-2\leq s-2$. Then
$$
\max\{f_2(n,t,\delta,s), g_2(n,t,\delta,s)\}<N_s(G)\leq \binom{d}{s}\leq \binom{n+t-\delta-2}{s}\leq f_2(n,t,\delta,s),
$$ a contradiction.

Suppose next that $n+t-d-2\geq s-1$. By Lemma \ref{lem1}, the function $\lambda_2(n,t,s,x)$ is convex for $t\leq x\leq h$. Since $t-1\leq \delta\leq n+t-d-2\leq h$, $\max\{f_2(n,t,\delta,s), g_2(n,t,\delta,s)\}<N_s(G)\leq N_s(Q)\leq \max\{f_2(n,t,\delta,s), g_2(n,t,\delta,s)\}$, a contradiction.
									
\par {\bfseries Subcase 2.2:}\ $D'\neq D$.
									
Let $u\in V(D')\backslash V(D)$. Since $d\geq h+2$, there exist at least two vertices in $D$, which are not adjacent to $u$. Let $v\in V(D)$ be a vertex such that $uv\notin E(Q)$. By the definition of $(n+t-d-2)$-disintegration and the complete graph $D$, $n+t-2=n+t-d-1+d-1\leq d_Q(u)+d_Q(v)\leq n+t-3$, a contradiction.
									
This completes the proof of Theorem \ref{thm1.1}. $\hfill\Box$

\subsection{Proof of Corollary \ref{cor1.3}}
\noindent\textbf{Proof:}
We first claim that $e(G)>\max\{f_2(n,t,\delta,2), g_2(n,t,\delta,2)\}$. Otherwise, $e(G)\leq \max\{f_2(n,t,\delta,2), g_2(n,t,\delta,2)\}$. By Lemma \ref{lem0},
\begin{equation*}
	\begin{split}
		\lambda_\alpha(G)\leq& \frac{\delta+\alpha n-\alpha \delta -1+\sqrt{(\delta+\alpha n-\alpha \delta -1)^2+4(1-\alpha)(2h_2-n\delta+\delta)}}{2}\\
		\leq& f_\alpha(n,\delta,h_2),
	\end{split}
\end{equation*}
a contradiction. Then $e(G)>\max\{f_2(n,t,\delta,2), g_2(n,t,\delta,2)\}$. By Corollary \ref{cor1.2}, the graph $G$ is one-to-one $t$-disjoint path coverable.  $\hfill\Box$

\subsection{Proof of Theorem \ref{thm2.1}}
\noindent\textbf{Proof:}
By contradiction, suppsoe that the graph $G$ of order $n$ with minimum degree $\delta$ is not one-to-many $t$-disjoint path coverable. Then $G$ is not a complete graph. Otherwise, $\delta(G)=n-1\geq \frac{n+t-1}{2}$. By Theorem \ref{thm0}(ii), the graph $G$ is one-to-many $t$-disjoint path coverable, a contradiction.

Let $w$ be a vertex of $G$ with $d_G(w)=\delta$. Suppose that the induced subgraph $G[V(G)\backslash \{w\}]=K_{n-1}$. Then, $\sigma_2(G)=\delta+n-2\geq n+t-1$. By Lemma \ref{lem2.1}, the graph $G$ is one-to-many $t$-disjoint path coverable, a contradiction. Thus $G[V(G)\backslash \{w\}]\neq K_{n-1}$.

If there exist two vertices $u,v\in V(G)\backslash \{w\}$ such that $uv\notin E(G)$ and $d_G(u)+d_G(v)\geq n+t-1$, we denote by $G_1=G+uv$. Then $G[V(G_1)\backslash \{w\}]\neq K_{n-1}$. Otherwise, $\sigma_2(G_1)=\delta+n-2\geq n+t-1$, then by Lemma \ref{lem2.1}, the graph $G$ is one-to-many $t$-disjoint path coverable, a contradiction. Iterating this process until no such pair of vertices remains. Then we obtain a graph, denoted by $Q$.

If the graph $Q$ is a complete graph, then by Lemma \ref{lem2.1}, the graph $G$ is one-to-many $t$-disjoint path coverable, a contradiction. Thus, we assume that the graph $Q$ is not a complete graph. For any $x,y\in V(Q)\backslash \{w\}$ with $xy\notin E(G)$, $d_Q(x)+d_Q(y)\leq n+t-2$. Since edges are only added in $G[V(G)\backslash\{w\}]$, $\delta(Q)=d_G(w)=\delta$. By Lemma \ref{lem2.1}, the graph $Q$ is not one-to-many $t$-disjoint path coverable. Let $h=\left\lfloor \frac{n+t-2}{2} \right\rfloor$, $D=D(Q;h)$ and $d=|D|$. We distinguish the following two cases.

\par {\bfseries Case 1:}\ $d=0$.

In the $h$-disintegration process, let $Q_0=Q$ and $Q_{i+1}=Q_i-x_i$, $0\leq i\leq n-1$, where $x_i$ is a vertex with $d_{Q_{i}}(x_i)\leq t$. Then $\delta(Q)\leq h$; otherwise no vertex could be deleted. Without loss of generality, we choose $w$ to be $x_0$, the vertex with minimum degree in $Q$.

By the definition of $h$-disintegration, $d_{Q_i}(x_i)\leq h$, $0\leq i\leq n-h-1$. Then,
\begin{equation*}
	\begin{split}
		N_s(Q)\leq \binom{\delta}{s-1}+(n-h-1)\binom{h}{s-1}+\binom{h}{s}.
	\end{split}
\end{equation*}

We distinguish the following two subcases.

\par {\bfseries Subcase 1.1:}\ $n+t$ is odd.

Since $n+t\geq 5$, we define $n+t=2k+1$ for some integer $k\geq 2$. Since $h=\left\lfloor \frac{n+t-2}{2} \right\rfloor=k-1$,
\begin{equation*}
	\begin{split}
		N_s(Q)-g_1(n,t,\delta,s)\leq&(n-k)\binom{k-1}{s-1}+\binom{k-1}{s}-(k-t-1)\binom{k-1}{s-1}-\binom{k+1}{s}\\
		=&2\binom{k-1}{s-1}+\binom{k-1}{s}-\binom{k+1}{s}=\binom{k-1}{s-1}-\binom{k}{s-1}\leq 0.
	\end{split}	
\end{equation*}

\par {\bfseries Subcase 1.2:}\ $n+t$ is even.

Since $n+t\geq 5$, we define $n+t=2k$ for some integer $k\geq 3$. Since $h=\left\lfloor \frac{n+t-2}{2} \right\rfloor=k-1$,
\begin{equation*}
	\begin{split}
		N_s(Q)-g_1(n,t,\delta,s)\leq&(n-k)\binom{k-1}{s-1}+\binom{k-1}{s}-(k-t-1)\binom{k-1}{s-1}-\binom{k}{s}\\
		=&2\binom{k-1}{s-1}+\binom{k-1}{s}-\binom{k}{s}=0.			
	\end{split}	
\end{equation*}

Then, $\max\{f_1(n,t,\delta,s), g_1(n,t,\delta,s)\}<N_s(G)\leq N_s(Q)\leq g_1(n,t,\delta,s)$, a contradiction.

\par {\bfseries Case 2:}\ $d\neq 0$.
			
\begin{cla}\label{cla2.1}
    $D$ is a complete graph.
\end{cla}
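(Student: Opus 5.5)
We argue by contradiction, exactly as in Claim \ref{cla1.1}. Suppose there are two distinct vertices $u,v\in V(D)$ with $uv\notin E(D)$. Since $D$ is an induced subgraph of $Q$ (the disintegration only deletes vertices), $uv\notin E(Q)$ as well.

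The plan has three steps.

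First, we use the definition of $h$-disintegration with $h=\left\lfloor \frac{n+t-2}{2}\right\rfloor$. Every vertex of the nonnull graph $D$ has degree at least $h+1$ in $D$. Hence
$$d_Q(u)+d_Q(v)\geq d_D(u)+d_D(v)\geq 2h+2\geq (n+t-3)+2=n+t-1,$$
using $2h\geq n+t-3$.

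Second, we compare this with the saturation property of $Q$. The construction of $Q$ stopped only when every nonadjacent pair $x,y$ satisfies $d_Q(x)+d_Q(y)\leq n+t-2$. The only subtlety is that the closure step was performed only for pairs in $V(G)\backslash\{w\}$, so the inequality is guaranteed only when neither vertex is $w$.

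Third, we rule out $w\in\{u,v\}$. Since $w=x_0$ is a vertex of minimum degree $\delta$ and $\delta\leq h$ (because $\delta\leq\left\lfloor\frac{n+t-2}{2}\right\rfloor$), $w$ is deleted in the first round of the disintegration. Hence $w\notin V(D)$, so $u,v\neq w$. This is the one point needing care, and it is settled by choosing the deletion order with $w$ first, as was done in Case 1.

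Combining the three steps gives $n+t-1\leq d_Q(u)+d_Q(v)\leq n+t-2$, a contradiction. Therefore $D$ is a complete graph.
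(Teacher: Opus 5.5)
Your proof is correct and follows the paper's argument: both use $d_D(u),d_D(v)\ge h+1$ to get $d_Q(u)+d_Q(v)\ge n+t-1$, which contradicts the bound $d_Q(x)+d_Q(y)\le n+t-2$ satisfied by every nonadjacent pair in $Q$ avoiding $w$. Your extra check that $w\notin V(D)$, because $d_Q(w)=\delta\le h$, is a step the paper uses without justifying in this claim; it follows directly from the definition of disintegration, which removes every vertex of degree at most $h$ in the first round, so no choice of deletion order is needed.
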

			
\noindent\textbf{Proof:} We establish the claim by contradiction. Suppose that there exist two vertices $u,v\in V(D)$ such that $uv\notin E(D)$. By the definition of $h$-disintegration, $d_D(u),d_D(v)\geq h+1$. Since $uv\notin E(D)$, $n+t-1\leq d_D(u)+d_D(v)\leq d_Q(u)+d_Q(v)\leq n+t-2$, a contradiction. Thus, the graph $D$ is a complete graph. $\hfill\Box$
			
\begin{cla}\label{cla2.2}
	$\delta\leq n+t-d-1$.
\end{cla}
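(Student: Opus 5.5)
We argue by contradiction, mirroring the proof of Claim \ref{cla1.2}, with the threshold $n+t-2$ replaced by $n+t-1$. Suppose $\delta\geq n+t-d$. Since $D$ is complete by Claim \ref{cla2.1}, each $u\in V(D)$ has $d_D(u)=d-1\geq h+1$, so $d\geq h+2$. Then $d-1\geq h+1>\frac{n+t-2}{2}$, which gives $n-d\leq d-t$. Hence any vertex $v\in V(Q)\backslash V(D)$ has at most $n-d\leq d-2$ neighbours in $D$, and so is non-adjacent in $Q$ to at least two vertices of $D$. Note that $w\notin V(D)$: otherwise $\delta=d_Q(w)\geq d-1\geq h+1$, which contradicts $\delta\leq h$. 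Also, $d_Q(y)\geq d-1\geq n+t-\delta-1$ for every $y\in V(D)$.

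We split into two cases. First suppose $V(Q)\backslash V(D)=\{w\}$, so $d=n-1$. If some $v\in V(D)$ has $vw\notin E(Q)$, then
$$d_Q(v)+d_Q(w)\geq (n-2)+\delta\geq n+t-1,$$
using $\delta\geq t+1$. This lets us apply Lemma \ref{lem2.1} to the pair $v,w$. The procedure defining $Q$ only added edges inside $V(G)\backslash\{w\}$, so this is the one place where the argument must handle $w$ directly: repeatedly adding all such edges $vw$ via Lemma \ref{lem2.1} turns $Q$ into $K_n$, which is one-to-many $t$-disjoint path coverable. Hence $G$ would be too, a contradiction. (If no such $v$ exists, $Q$ is already complete, again a contradiction.)

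Otherwise there is some $x\in V(Q)\backslash(V(D)\cup\{w\})$. Choose $y\in V(D)$ with $xy\notin E(Q)$. Both $x$ and $y$ lie outside $\{w\}$, and
$$d_Q(x)+d_Q(y)\geq \delta+(n+t-\delta-1)=n+t-1.$$
But by the construction of $Q$, every non-adjacent pair in $V(Q)\backslash\{w\}$ has degree sum at most $n+t-2$, a contradiction.

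The main obstacle is the degree-sum bookkeeping. The assumption $\delta\geq n+t-d$ has to yield exactly $n+t-1$, and it does so only because $d_Q(y)\geq d-1$ for $y\in D$. We also need $t+1\leq\delta$ in the first case to reach $n-2+\delta\geq n+t-1$; this is where the hypothesis $t+1\leq\delta$ of Theorem \ref{thm2.1} is used, as opposed to $t\leq\delta$ in Theorem \ref{thm1.1}.
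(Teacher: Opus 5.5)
Your skeleton is the paper's: assume $\delta\ge n+t-d$, use completeness of $D$ to get $d\ge h+2$, split on whether $V(Q)\setminus V(D)=\{w\}$, and in the second case contradict the terminal condition of the closure with a non-adjacent pair $x\in V(Q)\setminus(V(D)\cup\{w\})$, $y\in V(D)$. However, the step that produces $y$ is justified incorrectly. From $n-d\le d-t$ you infer that every $v\in V(Q)\setminus V(D)$ has at most $n-d$ neighbours in $D$. Nothing bounds the number of $D$-neighbours of an outside vertex by the number of outside vertices. For instance, in the configuration of your own first case, $n-d=1$ while $w$ has $\delta\ge t+1\ge 3$ neighbours in $D$. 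Your second case depends entirely on the existence of $y$, so this step must be replaced by the argument the paper has in mind. Every $v\notin V(D)$ was deleted during the $h$-disintegration at a moment when all of $V(D)$ was still present, and at that moment $v$ had degree at most $h$ in the current graph. Hence $|N_Q(v)\cap V(D)|\le h\le d-2$, so $v$ has at least two non-neighbours in $D$.

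With that repair your proof is complete, and in two places it is more careful than the paper's. First, you justify $w\notin V(D)$ via $\delta\le h$, which the paper only asserts. Second, in the first case the paper writes $d_Q(v)+d_Q(w)\le n+t-2$, although the terminal condition defining $Q$ only covers non-adjacent pairs inside $V(Q)\setminus\{w\}$. That case is really settled by the invariant $Q-w\neq K_{n-1}$ maintained during the construction of $Q$. That invariant rests on the same repeated application of Lemma \ref{lem2.1} (adding the edges $vw$ until $K_n$) that you carry out explicitly.
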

\noindent\textbf{Proof:} By contradiction, suppose that $\delta \geq n+t-d$. Then $d_D(u)=d-1\geq n+t-\delta-1$ for all $u\in V(D)$. Since the graph $D$ is a complete graph and $d_D(u)\geq h+1$ for all $u\in V(D)$, $d\geq h+2$. Then, for any vertex $v\in V(Q)\backslash V(D)$, there exist at least two vertices in $D$, which are not adjacent to $v$ in $Q$. Let $x\in V(Q)\backslash V(D)$, we assume that $y\in V(D)$ and $xy\notin E(Q)$. Note that $w\in V(Q)\backslash V(D)$. We distinguish two cases.

Suppose first that $V(Q)\backslash V(D)=\{w\}$. Then $x=w$ and $|D|=n-1$. If there exists a vertex $v\in V(D)$ with $vw\notin E(Q)$, then $n+t-1\leq n-2+\delta \leq d_Q(v)+d_Q(w)\leq n+t-2$, a contradiction. Then, the graph $Q$ is a complete graph, a contradiction.

Suppose next that $V(Q)\backslash V(D)\neq \{w\}$. Then there exists a vertex $x\in V(Q)\backslash (V(D)\cup \{w\})$. Since $d_Q(x)\geq \delta$ and $y\in V(D)$, $d_Q(x)+d_Q(y)\geq \delta+n+t-\delta-1=n+t-1$. Since $xy\notin E(G)$, it follows that $n+t-1\leq d_Q(x)+d_Q(y)\leq n+t-2$, a contradiction. Then, $\delta\leq n+t-d-1$.  $\hfill\Box$
			
Let $D'$ be the $(n+t-d)$-core of $Q$. Since $d\geq h+2$, $n+t-d-1\leq n+t-h-3\leq h$. Therefore, $D\subseteq D'$. We distinguish the following two subcases.
			
\par {\bfseries Subcase 2.1:}\ $D'=D$.
			
Since $D'=D$, $|D'|=|D|=d$. Then, by the definition of $(n+t-d-1)$-disintegration,
\begin{equation*}
	\begin{split}
		N_s(Q)\leq&\binom{\delta}{s-1}+(n-d-1)\binom{n+t-d-1}{s-1}+\binom{d}{s}\\
			  	 =&\binom{\delta}{s-1}+\lambda_1(n,t,s,n+t-d-1).
	\end{split}
\end{equation*}

Suppose first that $n+t-d-1\leq s-2$. By Claim \ref{cla2.2}, $\delta\leq n+t-d-1\leq s-2$. Then
$$
\max\{f_1(n,t,\delta,s), g_1(n,t,\delta,s)\}<N_s(G)\leq \binom{d}{s}\leq \binom{n+t-\delta-1}{s}\leq f_1(n,t,\delta,s),
$$ a contradiction.

Suppose next that $n+t-d-1\geq s-1$. By Lemma \ref{lem1}, the function $\lambda_1(n,t,s,x)$ is convex for $t\leq x\leq h$. Since $t+1\leq \delta\leq n+t-d-1\leq h$, $\max\{f_1(n,t,\delta,s), g_1(n,t,\delta,s)\}<N_s(G)\leq N_s(Q)\leq \max\{f_1(n,t,\delta,s), g_1(n,t,\delta,s)\}$, a contradiction.

\par {\bfseries Subcase 2.2:}\ $D'\neq D$.
									
Let $u\in V(D')\backslash V(D)$. Since $d\geq h+2$, there exist at least two vertices in $D$, which are not adjacent to $u$. Let $v\in V(D)$ be a vertex such that $uv\notin E(Q)$. By the definition of $(n+t-d-1)$-disintegration and the complete graph $D$, $d_Q(u)+d_Q(v)\geq n+t-d+d-1=n+t-1$. Since $uv\notin E(Q)$, $n+t-1\leq d_Q(u)+d_Q(v)\leq n+t-2$, a contradiction.
									
This completes the proof of Theorem \ref{thm2.1}. $\hfill\Box$

\subsection{Proof of Corollary \ref{cor2.3}}
\noindent\textbf{Proof:}
We first claim that $e(G)>\max\{f_1(n,t,\delta,2), g_1(n,t,\delta,2)\}$. Otherwise, $e(G)\leq \max\{f_1(n,t,\delta,2), g_1(n,t,\delta,2)\}$. By Lemma \ref{lem0},
\begin{equation*}
	\begin{split}
		\lambda_\alpha(G)\leq& \frac{\delta+\alpha n-\alpha \delta -1+\sqrt{(\delta+\alpha n-\alpha \delta -1)^2+4(1-\alpha)(2h_1-n\delta+\delta)}}{2}\\
		\leq& f_\alpha(n,\delta,h_1),
	\end{split}
\end{equation*}
a contradiction. Then $e(G)>\max\{f_1(n,t,\delta,2), g_1(n,t,\delta,2)\}$. By Corollary \ref{cor2.2}, the graph $G$ is one-to-many $t$-disjoint path coverable.  $\hfill\Box$

\subsection{Proof of Theorem \ref{thm3.1}}
\noindent\textbf{Proof:}
By contradiction, suppose that the graph $G$ of order $n$ and minimum degree $\delta$ is not many-to-many $t$-disjoint path coverable. Then $G$ is not a complete graph. Otherwise, $\delta(G)=n-1\geq \frac{n+t}{2}$. By Theorem \ref{thm0}(iii), the graph $G$ is many-to-many $t$-disjoint path coverable, a contradiction.

Let $w$ be a vertex of $G$ with $d_G(w)=\delta$. Suppose that the induced subgraph $G[V(G)\backslash \{w\}]=K_{n-1}$. Then, $\sigma_2(G)=\delta+n-2\geq n+2t-2\geq n+t$. By Lemma \ref{lem3.1}, the graph $G$ is many-to-many $t$-disjoint path coverable, a contradiction. Thus $G[V(G)\backslash \{w\}]\neq K_{n-1}$.

If there exist two vertices $u,v\in V(G)\backslash \{w\}$ such that $uv\notin E(G)$ and $d_G(u)+d_G(v)\geq n+t$, we denote by $G_1=G+uv$. Then $G[V(G_1)\backslash \{w\}]\neq K_{n-1}$. Otherwise, $\sigma_2(G_1)\geq \delta+n-2\geq n+2t-2\geq n+t$, then by Lemma \ref{lem3.1}, the graph $G$ is many-to-many $t$-disjoint path coverable, a contradiction. Iterating this process until no such pair of vertices remains. Then we obtain a graph, denoted by $Q$.

If the graph $Q$ is a complete graph, then by Lemma \ref{lem3.1}, the graph $G$ is many-to-many $t$-disjoint path coverable, a contradiction. Thus, we assume that the graph $Q$ is not a complete graph. For any $x,y\in V(Q)\backslash \{w\}$ with $xy\notin E(G)$, $d_Q(x)+d_Q(y)\leq n+t-1$. Since edges are only added in $G[V(G)\backslash\{w\}]$, $\delta(Q)=d_G(w)=\delta$. By Lemma \ref{lem3.1}, the graph $Q$ is not many-to-many $t$-disjoint path coverable. Let $h=\left\lfloor \frac{n+t-1}{2} \right\rfloor$, $D=D(Q;h)$ and $d=|D|$. We distinguish the following two cases.

\par {\bfseries Case 1:}\ $d=0$.

In the $h$-disintegration process, let $Q_0=Q$ and $Q_{i+1}=Q_i-x_i$, $0\leq i\leq n-1$, where $x_i$ is a vertex such that $d_{Q_{i}}(x_i)\leq t$. Then $\delta(Q)\leq h$; otherwise no vertex could be deleted. Without loss of generality, we choose $w$ to be $x_0$, the vertex with minimum degree in $Q$.

By the definition of $h$-disintegration, $d_{Q_i}(x_i)\leq h$, $0\leq i\leq n-h-1$. Then,
\begin{equation*}
	\begin{split}
		N_s(Q)\leq \binom{\delta}{s-1}+(n-h-1)\binom{h}{s-1}+\binom{h}{s}.
	\end{split}
\end{equation*}

We distinguish the following two subcases.

\par {\bfseries Subcase 1.1:}\ $n+t$ is odd.

Since $n+t\geq 5$, we define $n+t=2k+1$ for some integer $k\geq 2$. Since $h=\left\lfloor \frac{n+t-1}{2} \right\rfloor=k$,
\begin{equation*}
	\begin{split}
		N_s(Q)-g_0(n,t,\delta,s)\leq&(n-k-1)\binom{k}{s-1}+\binom{k}{s}-(k-t-1)\binom{k}{s-1}-\binom{k+1}{s}\\
		=&\binom{k}{s-1}+\binom{k}{s}-\binom{k+1}{s}=0.
	\end{split}	
\end{equation*}

\par {\bfseries Subcase 1.2:}\ $n+t$ is even.

Since $n+t\geq 5$, we define $n+t=2k$ for some integer $k\geq 3$. Since $h=\left\lfloor \frac{n+t-1}{2} \right\rfloor=k-1$,
\begin{equation*}
	\begin{split}
		N_s(Q)-g_0(n,t,\delta,s)\leq&(n-k)\binom{k-1}{s-1}+\binom{k-1}{s}-(k-t-2)\binom{k-1}{s-1}-\binom{k+1}{s}\\
		=&2\binom{k-1}{s-1}+\binom{k-1}{s}-\binom{k+1}{s}=\binom{k-1}{s-1}-\binom{k}{s-1}\leq 0.			
	\end{split}	
\end{equation*}

Then, $\max\{f_0(n,t,\delta,s), g_0(n,t,\delta,s)\}<N_s(G)\leq N_s(Q)\leq g_0(n,t,\delta,s)$, a contradiction.

\par {\bfseries Case 2:}\ $d\neq 0$.
			
\begin{cla}\label{cla3.1}
$D$ is a complete graph.
\end{cla}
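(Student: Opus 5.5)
Following the pattern of Claims \ref{cla1.1} and \ref{cla2.1}, I will argue by contradiction: assume some two vertices $u,v\in V(D)$ satisfy $uv\notin E(D)$. The ingredients are the definition of $h$-disintegration and the closure property of $Q$ with the threshold $n+t$ from Lemma \ref{lem3.1}.

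\emph{Step 1 (degrees in the core).} Since $D=D(Q;h)$ is nonempty, it is an $(h+1)$-core. Hence $\delta(D)\ge h+1$, and in particular $d_D(u),d_D(v)\ge h+1$. Because $D$ is an induced subgraph of $Q$, degrees can only increase when passing to $Q$, so
$$d_Q(u)+d_Q(v)\ge d_D(u)+d_D(v)\ge 2h+2 = 2\left\lfloor \tfrac{n+t-1}{2}\right\rfloor+2\ge n+t.$$
The last inequality holds because $2\lfloor (n+t-1)/2\rfloor\ge n+t-2$.

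\emph{Step 2 (contradiction with the closure).} The subgraph $D$ is induced, so $uv\notin E(D)$ gives $uv\notin E(Q)$. By construction of $Q$, every nonadjacent pair $x,y$ in $V(Q)\setminus\{w\}$ satisfies $d_Q(x)+d_Q(y)\le n+t-1$. This contradicts Step 1, provided that $u,v\neq w$.

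\emph{Step 3 (the main obstacle: excluding $w$ from $D$).} The only delicate point is the case where one of $u,v$ equals $w$. I would rule out $w\in V(D)$ directly. We have $d_Q(w)=\delta$, and the hypothesis gives $\delta\le\lfloor (n+t-1)/2\rfloor=h$. Every vertex of the $(h+1)$-core has degree at least $h+1$ in $D$, and hence also in $Q$, so $w\notin V(D)$. This is exactly the observation the earlier proofs use when they note $w\in V(Q)\setminus V(D)$.

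With $w\notin V(D)$ established, both $u$ and $v$ lie in $V(Q)\setminus\{w\}$. The closure bound from Step 2 therefore applies, and Steps 1 and 2 give $n+t\le d_Q(u)+d_Q(v)\le n+t-1$, a contradiction. Hence $D$ is complete.
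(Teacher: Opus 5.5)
Your proof is correct and follows the paper's argument: if $u,v$ were nonadjacent in the $(h+1)$-core, then $d_Q(u)+d_Q(v)\ge 2h+2\ge n+t$, which contradicts the closure bound $n+t-1$ in $Q$. Your Step 3 shows $w\notin V(D)$ because $d_Q(w)=\delta\le h$; this makes explicit a point the paper's proof of this claim leaves implicit, since the closure bound only covers pairs avoiding $w$.
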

\noindent\textbf{Proof:} We establish the claim by contradiction. Suppose that there exist two vertices $u,v\in V(D)$ such that $uv\notin E(D)$. By the definition of $h$-disintegration, $d_D(u),d_D(v)\geq h+1$. Since $uv\notin E(D)$, $n+t\leq d_D(u)+d_D(v)\leq d_Q(u)+d_Q(v)\leq n+t-1$, a contradiction. Thus, the graph $D$ is a complete graph. $\hfill\Box$
			
\begin{cla}\label{cla3.2}
	$\delta\leq n+t-d$.
\end{cla}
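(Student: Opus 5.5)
We argue by contradiction, exactly as for Claim \ref{cla1.2} and Claim \ref{cla2.2}. Suppose $\delta\geq n+t-d+1$. Since $D$ is complete by Claim \ref{cla3.1}, every $u\in V(D)$ has $d_Q(u)\geq d_D(u)=d-1\geq n+t-\delta$. Moreover $d_D(u)\geq h+1$ gives $d\geq h+2$.

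We first note that $V(Q)\neq V(D)$. Otherwise $Q=D$ would be complete, which is excluded. Next we show that every $x\in V(Q)\setminus V(D)$ has at least two non-neighbours in $D$. If $x$ were adjacent to all but at most one vertex of $D$, then $d_{Q_j}(x)\geq d-1\geq h+1$ at every stage of the $h$-disintegration. Hence $x$ would never be deleted, contradicting $x\notin V(D)$. In particular, for any $x\notin V(D)$ we may fix $y\in V(D)$ with $xy\notin E(Q)$.

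Now we split according to whether $V(Q)\setminus V(D)=\{w\}$.

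\emph{Case $V(Q)\setminus V(D)=\{w\}$.} Then $d=n-1$, and any $v\in V(D)$ has $d_Q(v)\geq n-2$. If $vw\notin E(Q)$, then
$$d_Q(v)+d_Q(w)\geq n-2+\delta\geq n-2+2t\geq n+t,$$
using $\delta\geq 2t$ and $t\geq 2$. In $G$ (and in every intermediate graph), $d(w)=\delta$ is unchanged, and the degree of $v$ only grows along the process. So this pair meets the hypothesis of Lemma \ref{lem3.1}. Adding $vw$ therefore preserves (non-)coverability, and we may continue adding all such edges. This makes the graph complete, and Lemma \ref{lem3.1} gives that $G$ is many-to-many $t$-disjoint path coverable, a contradiction. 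The same reasoning also gives the contradiction more directly: $Q$ is not coverable, yet $Q+vw$ repeated becomes $K_n$, which is coverable.

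\emph{Case there is $x\in V(Q)\setminus(V(D)\cup\{w\})$.} Pick $y\in V(D)$ with $xy\notin E(Q)$. Then
$$d_Q(x)+d_Q(y)\geq \delta+(n+t-\delta)=n+t.$$
But $x,y\neq w$ are nonadjacent in $Q$, so the stopping rule of the construction of $Q$ forces $d_Q(x)+d_Q(y)\leq n+t-1$, a contradiction. This case needs only $d-1\geq n+t-\delta$, which our assumption supplies with room to spare. Hence $\delta\leq n+t-d$.

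The main obstacle is the first case, because the stopping condition on $Q$ deliberately excludes pairs containing $w$. The plan there is to use Lemma \ref{lem3.1} directly on pairs $\{v,w\}$, as in the earlier claims, rather than the stopping rule. The needed inequality $\delta+n-2\geq n+t$ holds since $\delta\geq 2t\geq t+2$. The remaining steps are bookkeeping, which is why the bound $n+t-d$ here, one weaker than $n+t-d-1$ in the one-to-many case, suffices for the subsequent $(n+t-d)$-disintegration argument.
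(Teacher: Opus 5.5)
Your proof is correct and follows the paper's argument: the same contradiction hypothesis, the same split according to whether $V(Q)\setminus V(D)=\{w\}$, and the same clash in the second case between $d_Q(x)+d_Q(y)\geq n+t$ and the stopping rule (there $y\neq w$ because $d_Q(w)=\delta\leq h$ forces $w\notin V(D)$, which you should state). In the first case you are more careful than the paper: it asserts $d_Q(v)+d_Q(w)\leq n+t-1$ although the stopping rule only controls pairs avoiding $w$, and it never justifies that each vertex outside $D$ has two non-neighbours in $D$; your repeated use of Lemma \ref{lem3.1} on the pairs $\{v,w\}$ and your core argument fill exactly these two gaps.
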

\noindent\textbf{Proof:} By contradiction, suppose that $\delta \geq n+t-d+1$. Then $d_D(u)=d-1\geq n+t-\delta$ for all $u\in V(D)$. Since the graph $D$ is a complete graph and $d_D(u)\geq h+1$ for all $u\in V(D)$, $d\geq h+2$. Then, for any vertex $v\in V(Q)\backslash V(D)$, there exist at least two vertices in $D$, which are not adjacent to $v$ in $Q$. Let $x\in V(Q)\backslash V(D)$, we assume that $y\in V(D)$ and $xy\notin E(Q)$. Note that $w\in V(Q)\backslash V(D)$. We distinguish two cases.

Suppose first that $V(Q)\backslash V(D)=\{w\}$. Then $x=w$ and $|D|=n-1$. If there exists a vertex $v\in V(D)$ with $vw\notin E(Q)$, then $n+t\leq n+2t-2\leq n-2+\delta \leq d_Q(v)+d_Q(w)\leq n+t-1$, a contradiction. Then, the graph $Q$ is a complete graph, a contradiction.

Suppose next that $V(Q)\backslash V(D)\neq \{w\}$. Then there exists a vertex $x\in V(Q)\backslash (V(D)\cup \{w\})$. Since $d_Q(x)\geq \delta$ and $y\in V(D)$, $d_Q(x)+d_Q(y)\geq \delta+n+t-\delta=n+t$. Since $xy\notin E(G)$, it follows that $n+t\leq d_Q(x)+d_Q(y)\leq n+t-1$, a contradiction. Then, $\delta\leq n+t-d$.     $\hfill\Box$
			
Let $D'$ be the $(n+t-d+1)$-core of $Q$. Since $d\geq h+2$, $n+t-d\leq n+t-h-2\leq h$. Therefore, $D\subseteq D'$. We distinguish the following two subcases.
			
\par {\bfseries Subcase 2.1:}\ $D'=D$.
			
Since $D'=D$, $|D'|=|D|=d$. Then, by the definition of $(n+t-d)$-disintegration,
\begin{equation*}
	\begin{split}
		N_s(Q)\leq&\binom{\delta}{s-1}+(n-d-1)\binom{n+t-d}{s-1}+\binom{d}{s}\\
				=&\binom{x}{s-1}+\lambda_0(n,t,s,n+t-d).
	\end{split}	
\end{equation*}

Suppose first that $n+t-d\leq s-2$. By Claim \ref{cla3.2}, $\delta\leq n+t-d\leq s-2$. Then
$$
\max\{f_0(n,t,\delta,s), g_0(n,t,\delta,s)\}<N_s(G)\leq \binom{d}{s}\leq \binom{n+t-\delta}{s}\leq f_0(n,t,\delta,s),
$$ a contradiction.

Suppose next that $n+t-d\geq s-1$. By Lemma \ref{lem1}, the function $\lambda_0(n,t,s,x)$ is convex for $t+1\leq x\leq h$. Since $t+1\leq 2t\leq \delta\leq n+t-d\leq h$, $\max\{f_0(n,t,\delta,s), g_0(n,t,\delta,s)\}<N_s(G)\leq N_s(Q)\leq \max\{f_0(n,t,\delta,s), g_0(n,t,\delta,s)\}$, a contradiction.

\par {\bfseries Subcase 2.2:}\ $D'\neq D$.
									
Let $u\in V(D')\backslash V(D)$. Since $d\geq h+2$, there exist at least two vertices in $D$, which are not adjacent to $u$. Let $v\in V(D)$ be vertex such that $uv\notin E(Q)$. By the definition of $(n+t-d)$-disintegration and the complete graph $D$, $n+t=n+t-d+1+d-1\leq d_Q(u)+d_Q(v)\leq n+t-1$, a contradiction.
									
This completes the proof of Theorem \ref{thm3.1}. $\hfill\Box$

\subsection{Proof of Corollary \ref{cor3.3}}
\noindent\textbf{Proof:}
We first claim that $e(G)>\max\{f_0(n,t,\delta,2), g_0(n,t,\delta,2)\}$. Otherwise, $e(G)\leq \max\{f_0(n,t,\delta,2), g_0(n,t,\delta,2)\}$. By Lemma \ref{lem0},
\begin{equation*}
	\begin{split}
		\lambda_\alpha(G)\leq& \frac{\delta+\alpha n-\alpha \delta -1+\sqrt{(\delta+\alpha n-\alpha \delta -1)^2+4(1-\alpha)(2h_0-n\delta+\delta)}}{2}\\
		\leq& f_\alpha(n,\delta,h_0),
	\end{split}
\end{equation*}
a contradiction. Then $e(G)>\max\{f_0(n,t,\delta,2), g_0(n,t,\delta,2)\}$. By Corollary \ref{cor3.2}, the graph $G$ is many-to-many $t$-disjoint path coverable.  $\hfill\Box$

\section{ Conclusion }\label{sec5}

In this paper, using the $h$-disintegration \cite{a37} and the $\alpha$-spectral radius formula \cite{a29}, we characterized the minimum number of $s$-cliques and the $\alpha$-spectral radius that guarantee  a graph is one-to-one, one-to-many, or many-to-many $t$-disjoint path coverable, respectively. Specifically, Park, Kim and Lim \cite{a9} showed that if a graph $G$ of order $n\geq 2t+1$ is many-to-many $t$-disjoint path coverable, then $\delta(G)\geq t+1$. Thus, we pose the following problem.

\begin{prob}
	Let $s$, $t$, $\delta$ be integers with $s\geq 2$ and $3\leq t+1\leq \delta\leq 2t-1$. Characterize the minimum number of $s$-cliques that guarantees a graph of order $n$ with minimum degree $\delta$ is many-to-many $t$-disjoint path coverable.
\end{prob}

\section*{ Declaration of competing interest }
There is no conflict of interest.

\section*{ Data availability }
No data was used for the research described in the paper.

\section*{ Acknowledgements }
This work was partly supported by the National Natural Science Foundation of China (No. 12101126), Natural Science Foundation of Fujian Province (No. 2023J01539). This work was also partly supported  China Scholarship Council (No. 202409100010).

\end{CJK}
\end{document}